\documentclass[11pt]{amsart}
\usepackage{amsmath, amssymb}
\newfont {\cyr} {wncyr10}
\pagestyle{plain} \frenchspacing
\renewcommand{\labelenumi}{{(\roman{enumi})}}

\usepackage{amsfonts}
\usepackage{amsmath}
\usepackage{amssymb}
\usepackage{setspace}
\usepackage{multicol}
\usepackage{color}



\mathchardef\tnode="020E

\def\arc{ 
 \hbox{\kern -0.15em \vbox{\hrule width 2.5em height 0.6ex depth -0.5 ex} \kern
-0.33em}}

\def\darc{
 \rlap{\lower0.2ex\arc}{\raise0.2ex\arc}}

\def\tarc{
 \rlap{\rlap{\lower0.4ex\arc}{\raise0.4ex\arc}}{\arc}}

\def\stroke#1{
 \kern 0.05
\rlap\arc{{\textstyle{#1}}\atop\phantom\arc} \kern -0.22em}

\def\dstroke#1{
 \kern 0.05em
\rlap\darc{{\textstyle{#1}}\atop\phantom\darc} \kern -0.22em}

\def\centerscript#1{
 \setbox0=\hbox{$\tnode$} \hbox to
\wd0{\hss$\scriptstyle{#1}$\hss}}


\def\node{
 \def\super{} \def\sub{}
\futurelet\next\dolabellednode}

  \let\sp=^ \let\sb=_

  \def\dolabellednode{%
   \ifx\next\sb\let\next\getsub \else \ifx\next\sp\let\next\getsuper
\else\let\next\donode \fi \fi \next}

  \def\getsub_#1{\def\sub{#1}\futurelet\next\dolabellednode}

\def\getsuper^#1{\def\super{#1}\futurelet\next \dolabellednode}

  \def\donode{%
   \rlap{$\mathop{\phantom\tnode}\limits_{\centerscript{\sub}}
^{\centerscript{\super}}$}\tnode}

\def\varcdn{
 \kern
-0.03em\vbox{\kern -0.5ex \hbox to \wd0{\hss\vrule width 0.04em depth 5.8ex\hss} \kern -0.3ex \hbox{$\tnode$}}}

\def\a3{\node_1\arc\node_2\arc\node_3}

\def\c3{\node_1\arc\node_2\darc\node_3}

\def\m24{\node\arc\node\dstroke{\sim}\node} \def\u43{\node\darc\node\dstroke{\sim}\node}

\newcommand{\varcdnl}[1]{ 
\kern -0.03em\vbox{\kern -0.5ex \hbox to \wd0{\hss\vrule width 0.04em depth 5.8ex\hss} \kern -0.3ex
\hbox{$\tnode^{#1}$}}}

\def\d4{\node^1\arc\node^{2}_{\varcdnl{3}}\arc\node^{4}}


\def\nodef{
\def\super{} \def\sub{} \futurelet\next\dolabellednodef}

  \let\sp=^ \let\sb=_

  \def\dolabellednodef{%
  \ifx\next\sb\let\next\getsubf \else
\ifx\next\sp\let\next\getsuperf \else\let\next\donodef \fi \fi \next}

\def\getsubf_#1{\def\sub{#1}\futurelet\next\dolabellednodef}

\def\getsuperf^#1{\def\super{#1}\futurelet\next \dolabellednodef}

  \def\donodef{%
  \rlap{$\mathop{\phantom\tnodef}\limits_{\centerscript{\sub}}
^{\centerscript{\super}}$}\tnodef}

\def\varcdnf{
 \kern -0.03em\vbox{\kern -0.5ex \hbox to \wd0{\hss\vrule width 0.04em depth
5.8ex\hss} \kern -0.3ex \hbox{$\tnodef$}}}

\newtheorem{theorem}{Theorem}[section]
\newtheorem{lemma}[theorem]{Lemma}

\newtheorem{definition}[theorem]{Definition}

\newtheorem{hyp}[theorem]{Hypothesis}

\newcounter{claim}[theorem]

\newcounter{cclaim}[theorem]


\def \udot {{}^{\textstyle .}} 

\newcommand{\F}{\mathrm{F}}
 \newcommand{\G}{\mathrm{G}}  
\newcommand{\Q}{\mathrm{Q}}  
\newcommand{\Aut}{\mathrm{Aut}}  
  
\newcommand{\Syl}{\mathrm{Syl}}\newcommand{\syl}{\mathrm{Syl}} 
  \newcommand{\GF}{\mathrm{GF}}
  \newcommand{\SL}{\mathrm{SL}}
  
\newcommand{\PSp}{\mathrm{PSp}} \newcommand{\Sym}{\mathrm{Sym}}
\newcommand{\Alt}{\mathrm{Alt}}  
  
\newcommand{\U}{\mathrm{PSU}}

\def \syl {\hbox {\rm Syl}}\def \Syl {\hbox {\rm Syl}}

\def \CC {\mathcal C}

\def \Aut{ \mathrm {Aut}}

\def \Mat{\mbox {\rm Mat}}

\def \Co {\mbox {\rm Co}}

\def \McL{\mbox {\rm McL}}

\def \PSU {\mbox {\rm \PSU}}



\begin{document}
\renewcommand{\labelenumi}{(\roman{enumi})}

\title  {An improved $3$-local characterisation of $\McL$ and its automorphism group}
 \author{Chris Parker}
  \author{Gernot Stroth}
\address{Chris Parker\\
School of Mathematics\\
University of Birmingham\\
Edgbaston\\
Birmingham B15 2TT\\
United Kingdom} \email{c.w.parker@bham.ac.uk}

\address{Gernot Stroth\\
Institut f\"ur Mathematik\\ Universit\"at Halle - Wittenberg\\
Theodor Lieser Str. 5\\ 06099 Halle\\ Germany}
\email{gernot.stroth@mathematik.uni-halle.de}

\email {}

\def\l {\lambda}
\def \eps {\varepsilon}
\def \Irr {\mathrm {Irr}}
\date{\today}

\maketitle \pagestyle{myheadings}

\markright{{\sc }} \markleft{{\sc Chris Parker and Gernot Stroth}}
\begin{abstract}
This article presents a $3$-local characterisation of the sporadic simple group $\McL$ and its automorphism group.  The proof of the theorem is underpinned by two further identification theorems, one due to Camina and Collins and the other proved in this paper. Both  these supporting results are proved by using  character theoretic methods. The main theorem is applied in  our  investigation of groups with a large $3$-subgroup \cite{PS1}.
\end{abstract}
\section{Introduction}

This article extends earlier work of Parker and Rowley \cite{McL} in which the McLaughlin sporadic simple group $\McL$ and its automorphism group are characterised by certain 3-local information. Suppose that $p$ is a prime and $G$ is a finite group. Then the normalizer of a non-trivial $p$-subgroup of $G$  is called a \emph{$p$-local subgroup} of $G$.  A subgroup $M$ of $G$ is said to be of \emph{characteristic} $p$ provided $F^*(M) = O_p(M)$ where $F^*(M)$ is the generalized Fitting subgroup of $M$.  See \cite{Aschbacher} for the fundamental properties of the generalized Fitting subgroup.  The group $G$ is of \emph{local characteristic $p$} if every $p$-local subgroup of $G$ has characteristic $p$ and $G$ is of \emph{parabolic characteristic $p$} if every $p$-local subgroup of $G$ which contains a Sylow $p$-subgroup of $G$ has characteristic $p$. The difference between these two group theoretic properties is  the difference between  the characterisation theorem presented in \cite{McL} and the theorem presented in this article. The main theorem of the former article essentially assumes that the group under investigation is of local characteristic $3$.  The theorem we prove here in  essence only assumes that the group $G$ has parabolic characteristic $3$ though it is not necessary to articulate this explicitly in the statement of the  theorem.

 \begin{theorem}\label{McL}  Suppose that G is a finite group, $S \in \Syl_3
(G)$, $Z = Z(S)$ and $J$ is an elementary
abelian subgroup of $S$ of order $3^4$.  Further assume that
\begin{enumerate}
\item  $O^{3'}(N_G(Z)) \approx 3^{1+4}_+.2\udot\Alt(5)$;
\item  $O^{3^\prime}(N_G(J)) \approx 3^4.\Alt(6)$; and
\item $C_G(O_3(C_G(Z)))\le O_3(C_G(Z))$.
\end{enumerate}
Then $G \cong \McL$ or $\Aut(\McL)$.
 \end{theorem}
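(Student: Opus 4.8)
The plan is to run the standard amalgam/local-analysis machine for a $3$-local characterisation, mimicking the strategy of \cite{McL} but starting from the weaker parabolic-characteristic hypothesis. Write $C = C_G(Z)$ and $H = N_G(J)$. Hypothesis (i) gives $O^{3'}(C) \approx 3^{1+4}_+.2{\udot}\Alt(5)$, so $O_3(C) = S \cap O^{3'}(C)$ is extraspecial of order $3^5$ with centre $Z$, and $C/O_3(C)$ involves $2{\udot}\Alt(5) \cong \SL_2(5)$ acting on $O_3(C)/Z \cong 3^4$ as a natural $\SL_2(5)$- (equivalently $\SL_2(9)$-restricted) module. Hypothesis (iii), $C_G(O_3(C)) \le O_3(C)$, forces $C = N_G(Z)$ to have characteristic $3$; together with (ii), which tells us $N_G(J)$ has shape $3^4{:}\Alt(6)$ with $\Alt(6)$ acting irreducibly on $J$, we have two $3$-local subgroups that we will show are the maximal $3$-local subgroups containing $S$, and they generate $G$. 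The first major step is therefore: establish that $S \in \Syl_3(G)$ has exactly these two (classes of) maximal overgroups among $3$-local subgroups, i.e.\ analyse $N_G(S)$, show $Z = Z(S)$ is not normal in $G$, locate $J$ inside $O_3(C)$ (note $J$ must be one of the maximal elementary abelian subgroups of the extraspecial group, and $N_C(J)$ has the right shape to sit inside $H$), and verify $J \trianglelefteq S$ so that $N_G(J) \ge S$.

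The second step is the amalgam analysis. Set $G_1 = C$, $G_2 = H$, $B = N_G(S)$, and study the action of $G$ on the coset geometry; by a Goldschmidt-type argument the critical distance and the structure of the modules $O_3(G_i)/Z$ pin down the amalgam $\mathcal{A} = (G_1, B, G_2)$ up to isomorphism. The expected outcome is that $\mathcal{A}$ is isomorphic to the corresponding amalgam in $\McL$ (the $3$-local geometry of $\McL$ is exactly of this shape, with parabolics $3^{1+4}{:}2\Alt(5)$ and $3^4{:}\Alt(6)$ meeting in $3^{1+4}{:}2\Alt(4)$ or similar), and hence $G$ contains a copy of $\langle G_1, G_2\rangle \cong$ (the universal completion modulo the kernel). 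Here is where the excerpt's remark about the two auxiliary "character theoretic" identification theorems enters: rather than pushing the amalgam argument all the way to a presentation of $G$, I would use the amalgam to produce a large subgroup — e.g.\ an involution centraliser, or the normaliser of a suitable $3$- or $2$-subgroup isomorphic to the corresponding subgroup of $\McL$ — and then invoke one of those identification theorems to conclude $F^*(G) \cong \McL$. Concretely: find an involution $t$ with $C_G(t)$ of the shape it has in $\McL$ (namely $2{\udot}\Alt(8)$), or identify $N_G(3A)$/$N_G(3B)$ data matching $\McL$, and quote the appropriate recognition theorem.

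The third and final step handles the automorphism group: once $F^*(G) \cong \McL$, the hypotheses are invariant under the outer automorphism of $\McL$ of order $2$ (which normalises $S$, $Z$, and a suitable $J$), $\Out(\McL) \cong 2$, and the three conditions do not force $G = F^*(G)$, so $G \cong \McL$ or $\Aut(\McL)$, as claimed. The main obstacle I expect is the transition from the weaker hypothesis: under parabolic (rather than local) characteristic $3$ one cannot immediately assume every $3$-local subgroup has characteristic $3$, so the delicate work is to prove that the $3$-local subgroups containing $S$ \emph{do} have characteristic $3$ (using (iii) as the anchor and propagating it), to rule out the ``characteristic $3$ fails somewhere'' configurations that do not lead to $\McL$ — for instance eliminating the possibility that $O^{3'}(N_G(Z))$ sits inside a larger $3$-local subgroup of non-characteristic-$3$ type, and controlling $C_G(Z)$-fusion of the subgroups of $J$. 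This pushing-up / characteristic-$p$-verification step, and the attendant module-cohomology computations $H^1$ and $H^2$ for $\SL_2(5)$ and $\Alt(6)$ on the relevant $\GF(3)$-modules, is where the real content lies; the amalgam identification and the final quotation of the character-theoretic recognition theorems are then comparatively routine.
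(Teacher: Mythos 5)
Your proposal misses the actual reduction on which the paper turns, and the step you dismiss as routine is where all of the work lies. The paper does not redo any amalgam analysis: the main theorem of \cite{McL} already identifies $\McL$ and $\Aut(\McL)$ from hypotheses (i) and (ii) \emph{plus} the stronger condition $C_G(O_3(C_G(x)))\le O_3(C_G(x))$ for \emph{all} $x\in J^\#$, so the entire content of the present paper is to deduce that condition for the non-$3$-central classes of $J$ from the single instance (iii). Concretely, one fixes $y\in (Q\cap J)\setminus Z$ and must show that $C_G(y)$ is no larger than $C_M(y)$, where $M=N_G(J)$; a priori $C_G(y)$ could contain a component or a large $3'$-core, and nothing in hypotheses (i)--(iii) rules this out locally. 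Your plan to handle this by ``pushing-up and $H^1$, $H^2$ computations for $\SL_2(5)$ and $\Alt(6)$'' does not address it: cohomological and pushing-up arguments control overgroups of $S$ and failure-of-factorisation modules, not the global structure of the centraliser of a non-$3$-central element of order $3$.

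You have also misread the role of the two auxiliary character-theoretic theorems. They are not recognition theorems for $\McL$ (via an involution centraliser $2{\udot}\Alt(8)$ or otherwise); they are statements of the form ``if $H\cong 3^3{:}2^2$ (resp.\ $3^3{:}\Sym(4)$) is suitably strongly $3$-embedded in $G$, then $G=H$,'' proved by Suzuki's method of special classes and structure-constant estimates. The paper applies them with $G$ replaced by $C_G(y)/\langle y\rangle$ (or an index-$3$ subgroup thereof, produced by Gr\"un's theorem) and $H$ replaced by $C_M(y)/\langle y\rangle\cong K_a'$ or $K_b$, after first proving the embedding conditions via fusion control in $J=J(S)$, the Smith--Tyrer theorem, and a signalizer argument. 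This yields $C_G(y)=C_M(y)$, hence $C_G(O_3(C_G(y)))\le O_3(C_G(y))$, and the main theorem then follows by quoting \cite[Theorem~1.1]{McL}. If you insist on the amalgam route you would in effect be reproving that 200-page theorem from scratch, and even then your sketch gives no viable mechanism for the one genuinely new difficulty here.
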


The main theorem of \cite{McL} carries the additional hypothesis $$C_G(O_3(C_G(x))) \leq O_3(C_G(x))$$ for all $x \in J^\#$. Hence to prove Theorem~\ref{McL}, we just need to show that this inequality  is a consequence of the assumptions in Theorem~\ref{McL}.

 Theorem~\ref{McL} is applied in our investigation of exceptional cases which arise in the determination of  groups with a large $p$-subgroup \cite{PS1}.
A  $p$-subgroup $Q$ of a group $G$ is \emph{large} if and only if
\medskip
\begin{enumerate}
\item[(L1)]\label{1} $F^*(N_G(Q))=Q$; and
\item[(L2)]\label{2} for all  non-trivial subgroups $U$ of $ Z(Q)$, we have  $N_G(U)\le N_G(Q)$.
\end{enumerate}
\medskip
It is an elementary observation that most of the groups of  Lie type in characteristic $p$ have a  \emph{large}
$p$-subgroup. The only Lie type groups in characteristic $p$ and rank at least $2$ which do not contain
such a subgroup are $\PSp_{2n}(2^a)$, $\F_4(2^a)$ and $\G_2(3^a)$. It is not difficult to show that groups $G$
which contain a large $p$-subgroup are of parabolic characteristic $p$ (see \cite[Lemma 2.1]{PS1}).  The work in
\cite{MSS} begins the determination of the structure of the $p$-local overgroups of $S$ which are not contained in $N_G(Q)$. The idea is to collect data about
the $p$-local subgroups of $G$ which contain a fixed Sylow $p$-subgroup and then using this  information  show that the subgroup generated by them is  a  group of Lie type.   However  sometimes one is confronted with the following situation:
some (but perhaps not all) of the  $p$-local subgroups of $G$ containing a given Sylow $p$-subgroup $S$ of $G$
generate a subgroup   $H$ and $F^*(H)$  is known to be isomorphic to a Lie type group in characteristic $p$. Usually  $G=H$. To show this, we assume that  $H$ is a proper subgroup of $G$,   and first establish  that $H$ contains \emph{all} the  $p$-local subgroups of $G$ which contain $S$. The next step then demonstrates that $H$ is strongly $p$-embedded in $G$ at which stage \cite{PSStrong} is applicable and delivers $G =
H$. The last two steps are reasonably well understood, at least for groups with mild extra assumptions imposed. However
it might be that the first step cannot be made.  Typically this occurs only when $N_G(Q)$ is not contained in
$H$. The main theorem of this article is applied in just this type of situation. Specifically, it  is  applied in the case that   $p=3$ and $F^\ast(H)$ is the group $\U_4(3)$. In this case in $F^\ast(H)$  the large $3$-subgroup $Q$ is extraspecial of order $3^5$ and is the
largest normal 3-subgroup in the normalizer of a root group in $F^\ast(H)$. In this configuration  we are not able to show that $N_G(Q) \le H$, as is demonstrated by noting that  $\U_4(3)$ is a subgroup of $\McL$. In fact in \cite{PS1} we show that, if
$F^\ast(H) \cong \U_4(3)$ and $N_G(Q) \not\leq H$, then $F^\ast(G) \cong \McL, \Co_2$, or $\U_6(2)$. It is precisely for the identification of  $\McL$ that we need the result of this paper.

The route to prove Theorem~\ref{McL} is paved by two preliminary  results. These theorems state that under certain hypotheses a subgroup $H$ of a group $G$ is actually equal to $G$.

We denote by  $K$  the  subgroup of $\Alt(9)$ which normalizes $$J= \langle (1,2,3), (4,5,6), (7,8,9)\rangle.$$ Thus $$K=  \langle (1,2,3),   (1,4,7)(2,5,8)(3,6,9), (1,2)(4,5),
(1,4)(2,5)(3,6)(7,8)\rangle.$$
 The first of the preliminary theorems  is as follows:

\begin{theorem}\label{G=H} Suppose $G$ is a finite group, $H \le G$  with $H \cong K$ and  $J= O_3(H)$.   If  $C_G(j) \le H$ for all $j \in J^\#$  and $J$ is strongly closed in $H$ with respect to $G$, then $G=H$.
\end{theorem}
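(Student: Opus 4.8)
The plan is to locate a Sylow $3$-subgroup $T$ of $G$ and its normaliser, show that $J$ is a trivial-intersection subgroup of $G$ with $N_G(J)=H$, and then argue according to whether $O_3(G)=1$. First, a computation inside $K$ gives that a Sylow $3$-subgroup $T$ of $H$ has order $3^4$, that $Z(T)=\langle z\rangle$ has order $3$ with $z\in J$, and that $C_H(z)=T$; since $z\in J^{\#}$ the centraliser hypothesis gives $C_G(z)=T$. As $N_G(\langle z\rangle)/C_G(\langle z\rangle)$ embeds in $\Aut(\mathbb{Z}/3)$, the $3$-part of $N_G(\langle z\rangle)$ equals $|T|$, and since any Sylow $3$-subgroup of $G$ containing $T$ would normalise $Z(T)$, we get $T\in\Syl_3(G)$ and $|N_G(T):T|\le 2$, whence $N_G(T)=N_H(T)\le H$. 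Moreover $C_{O_{3'}(G)}(z)\le T\cap O_{3'}(G)=1$, so $z$, and therefore $J$, acts fixed-point-freely on $O_{3'}(G)$; as no non-trivial group admits a fixed-point-free action by a non-cyclic abelian group, $O_{3'}(G)=1$.

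Next, if $1\ne u\in J\cap J^{g}$ then the abelian group $J^{g}$ centralises $u$, so $J^{g}\le C_G(u)\le H$, and strong closure of $J$ in $H$ forces $J^{g}=J$; hence $J$ is a TI-subgroup, $H$ controls $G$-fusion in $J$, and $T\le H\le N_G(J)$. From $C_G(J)\le C_G(z)=T$ and $C_T(J)=J$ we get $C_G(J)=J$. Now $z$ lies in an $H$-orbit on $J^{\#}$ of length $8$ with point-stabiliser $C_H(z)=T$ of order $3^4$, while the other two $H$-orbits have point-stabilisers of orders $2^2\cdot 3^3$ and $2\cdot 3^3$, neither admitting a subgroup of order $3^4$; since the $N_G(J)$-orbit of $z$ is a union of $H$-orbits with point-stabiliser $C_{N_G(J)}(z)=T$, it cannot meet the other two, so it again has length $8$ and $|N_G(J)|=8\cdot 3^4=|H|$, i.e. $N_G(J)=H$. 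Consequently, if $O_3(G)\ne 1$ then $z\in O_3(G)\cap Z(T)$; as $O_3(G)$ lies in every Sylow $3$-subgroup, $z^{G}\subseteq O_3(G)\subseteq T\subseteq H$, so strong closure gives $z^{G}\subseteq J$, and since $J$ is an irreducible $\mathbb{F}_3H$-module we obtain $J=\langle z^{H}\rangle\le\langle z^{G}\rangle\trianglelefteq G$. Then $G\le N_G(J)=H$, so $G=H$.

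It remains to exclude $O_3(G)=1$. Then $F^{*}(G)=E(G)\ne 1$ and $Z(E(G))=1$ (it is a normal $3$-subgroup), so $E(G)=L_1\times\cdots\times L_n$ with the $L_i$ non-abelian simple and $G\hookrightarrow\Aut(E(G))$. Since $|H|=2^3\cdot 3^4$ is solvable by Burnside's theorem, no conjugate of any $L_i$ lies in $H$; this forces $J$ to permute the $L_i$ trivially (a $3$-orbit would put a diagonal copy of $L_1$ inside $C_G(j)\le H$ for a suitable $j\in J^{\#}$) and to centralise no $L_i$, and for $n\ge 2$ it gives $J\cap L_1=1$, hence $C_{L_1}(J)\le J\cap L_1=1$ — impossible for the non-cyclic abelian group $J$. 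Thus $n=1$: $L:=E(G)$ is simple, $L\trianglelefteq G\le\Aut(L)$, $z$ induces an automorphism of $L$ of order $3$ whose centraliser $C_L(z)=T\cap L$ is a $3$-group, $T\cap L\in\Syl_3(L)$ with $|L|_3\le 3^4$ and exponent at most $3^2$ (recall $T^{9}=1$), $|T:T\cap L|$ divides $|\Out(L)|$, $N_L(J)\le H$, and $|H|$ divides $|\Aut(L)|$. Running through the list, furnished by the classification, of simple groups admitting an order-$3$ automorphism with a $3$-group centraliser — essentially $\Alt(5)$, $\Alt(6)$, groups $\PSL_2(q)$ with $q\in\{7,8,17,19,27,\dots\}$ or $3\mid q$, $\PSL_3(3)$, $\PSp_4(3)=\mathrm{PSU}_4(2)$, $\mathrm{PSU}_3(3)$ and $M_{11}$ — each is excluded: most fail ``$|H|$ divides $|\Aut(L)|$'', and for the survivors the $3$-local structure clashes with the one forced on $G$ (e.g. $|N_{\PSp_4(3)}(J)|$ and $|N_{\PSL_2(27)}(\Syl_3)|$ exceed $|H|$, and the relevant Sylow $3$-subgroups of the $\PSL_2$-candidates are abelian or contain an element of order $27$, unlike $T$). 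This contradiction completes the proof.

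The heart of the matter — and the step I expect to be the main obstacle — is the last paragraph: ruling out a non-trivial $E(G)$. Reducing to a single simple group $L$ is routine, but extracting from ``$L$ has an order-$3$ automorphism with $3$-group centraliser'' a clean finite list of candidates and disposing of each is real work, and this is where the character-theoretic input (centraliser orders of $3$-elements and $3$-local data, read off character tables) is used. An alternative matching the paper's stated character-theoretic approach would avoid components altogether: using that $J$ is a TI-subgroup with $N_G(J)=H$ and that every $C_G(j)$, $j\in J^{\#}$, lies in $H$, apply Brauer's theory of exceptional characters relative to the TI-set $J$ to compute $|G|=|H|$ directly — the delicate point there being to control the irreducible characters of $G$ not linked to $J$, for which one uses the explicitly known character table of $K$.
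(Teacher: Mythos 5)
Your opening reductions are mostly sound and are genuinely different from what the paper does: establishing $T\in\Syl_3(G)$, that $J$ is a TI-subgroup with $N_G(J)=H$ and $C_G(J)=J$, and disposing of $O_3(G)\neq 1$ via $z^G\subseteq J$ and irreducibility of $J$ are all correct in outline. (One slip: you only prove that the single element $z$ acts fixed-point-freely on $O_{3'}(G)$, which does not license the quoted fact about fixed-point-free actions of non-cyclic abelian groups; the correct route is coprime generation, $O_{3'}(G)=\langle C_{O_{3'}(G)}(j)\mid j\in J^\#\rangle\le H$, so $O_{3'}(G)$ is a normal $3'$-subgroup of $H\cong K$ and hence trivial. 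This is repairable.) The real problem is that everything then funnels into your final paragraph, and that paragraph is not a proof.

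Concretely: there is no quotable ``list, furnished by the classification, of simple groups admitting an order-$3$ automorphism with a $3$-group centraliser.'' The nearest genuine results (Feit--Thompson on a self-centralising subgroup of order $3$, or classifications of groups with very small Sylow $3$-subgroups) do not cover a centraliser that is a $3$-group of order up to $3^4$; producing such a list is at least as hard as the theorem you are trying to prove, and your enumeration with an ellipsis and the phrase ``each is excluded'' papers over exactly the work that has to be done. Worse, any such argument rests on the classification of finite simple groups, which would make this identification theorem circular in the setting it is designed for. The paper avoids all of this: it shows that $\mathcal J\cup\mathcal C_9\cup\dots\cup\mathcal C_{14}$ is a set of Suzuki special classes, uses the Feit--Thompson fact that $\langle a,b\mid a^3=b^3=(ab)^3=1\rangle$ has an abelian normal subgroup of index $3$ to compute the structure constants $a_{xyz}$, $a_{xxy}$, $\dots$ inside $H$, induces a basis of class functions vanishing off the special classes to obtain four candidate fragments of the character table of $G$, and kills three of them and forces $G=H$ in the fourth using the congruences $|G:H|\equiv 1\pmod{27}$, $\theta(1)\equiv\theta(z)\pmod 9$ and $\theta(1)\equiv 4\theta(z)-3\theta(x)\pmod{27}$. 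Your closing aside about exceptional characters attached to the TI-set $J$ is indeed the right direction, but as written it is a one-sentence plan, not an argument; the proposal therefore has an essential gap at its central step.
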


The second theorem that we require is due to  Camina and Collins \cite[Proposition 5]{caminacollins} and  we record it here.

\begin{theorem}[Camina and Collins \cite{caminacollins}]\label{108} Suppose  $G$ is a finite group and $H$ is a subgroup of $G$ with $$H \cong \langle (1,2,3), (4,5,6),(7,8,9), (1,2)(4,5), (1,2)(7,8)\rangle \approx 3^3:2^2.$$ Let $T \in \Syl_3(H)$ and assume $N_G(T)= H$ and $C_G(t) \le H$ for all $t \in T^\#$. Then $G=H$.
\end{theorem}

The hypothesis about the embedding of $H$ in $G$ in Theorem~\ref{108} is equivalent  to saying that $H$ is strongly $3$-embedded in $G$.
The proofs of both of the above theorems  exploit the methods introduced by Suzuki which permit  parts of the character table of  $G$ to be constructed. Details of the theory behind the Suzuki method are very well presented in \cite{CurtisReiner}.  The embedding properties in both Theorems~\ref{G=H} and \ref{108}  which assert that centralizers of certain  elements of $H$ are contained in $H$ are precisely the requirements needed to make the Suzuki theory of special classes work. The result of the  Suzuki method  are fragments of possible character tables for $G$. These fragmentary  tables provide all the character values on certain elements of order $3$ in $H$.

For the proof of Theorem~\ref{G=H},  as we start to build up the required character decompositions there is an overwhelming number of possibilities and so we have performed this calculation using {\sc Magma} \cite{Magma}.  The result of this computation is four fragments of possible  character tables for $G$. However  this statement is rather disingenuous as in fact each fragment represents many possible character tables as the entries of the partial tables are only known up to sign choices.

 Recall that   for $x,y, z \in G$ the $G$-structure constant $$a_{xyz}= |\{(a,b) \in x^G \times y^G \mid ab=z\}|$$ is determined by the character table of $G$ by the following equation
$$a_{xyz} = \frac{|G|}{|C_G(x)||C_G(y)|}\sum_{\chi \in \Irr(G)}
\frac{\chi (x)\chi (y)\chi (z^{-1})}{\chi (1)}.$$
If we select $x, y \in H$ such that $C_G(x)$ and $C_G(y)$ are contained in $H$, then we know $|C_G(x)|=|C_H(x)| $  and $|C_G(y)|= |C_H(y)|$. Furthermore, for certain choices of $x$, $y$ and $z$, we know all the character values of $x$, $y$ and $z$. Thus the only unknown quantity on the left hand side of the structure constant equation is $|G|$.
The proof of Theorem~\ref{G=H} pivots    on  the following fundamental fact \cite{FTh}  about groups generated by two elements of order $3$ which have product of order $3$: \begin{quote}\emph{Suppose that $X= \langle x,y \mid x^3=y^3=(xy)^3=1\rangle$. Then $X$ has an abelian normal subgroup of index $3$.}
\end{quote}
This fact allows us  to show that for certain $z\in H$, and  for certain pairs $x,y $ of elements of order $3$ in $G$, if $xy=z$ then $x,y \in H$. This means that we can calculate $a_{xyz}$ in $H$ and so we know the left hand side of the structure constant formula. Hence in principle we can determine $|G|$. What in fact happens is that we can discover enough information about $|G|$ to decide that the possible partial character tables are invalid or to show that $G=H$.

Once Theorem \ref{G=H} is proved, in Section 3 we prove  Theorem~\ref{McL}.  Now suppose  $G$ and $J$ are as in Theorem~\ref{McL} and set $Q= O_3(N_G(Z))$ and $M= N_G(J)$.  The initial part of the proof recalls some pertinent facts from \cite{McL}. In particular, we recall that $M/J \cong \Mat(10)$ or $2 \times \Mat(10)$. For $y \in Q\setminus Z$, we also show that $O^3(C_M(y))/\langle y \rangle$ is isomorphic to the  group $H$ in Theorem~\ref{108} if   $N_G(J)/J \cong \Mat(10)$ and to the group $H $ in Theorem~\ref{G=H} if  $N_G(J)/J \cong 2 \times \Mat(10)$. The main technical result in this section proves, for $x \in O_3(C_M(y)/\langle y\rangle)$, $C_{C_G(y)/\langle y\rangle}(x) \le C_M(y)/\langle y\rangle$ and exploits the theorem of Smith and Tyrer \cite{SmTy}.  Once this is proved we   quickly finish the proof of Theorem~\ref{McL} with the help of Theorems~\ref{G=H} and \ref{108}.

Our notation follows that of \cite{Aschbacher} and \cite{Gor}.
We use {\sc Atlas} \cite{Atlas} notation for group extensions.   For odd primes $p$, the extraspecial groups of exponent $p$ and
order $p^{2n+1}$ are denoted by $p^{1+2n}_+$.  The quaternion group of order $8$ is   $\Q_8$ and $\Mat(10)$ is the Mathieu group of degree $10$. A  central product of groups $H$ and $K$ will be denoted $H\circ K$. For a subset $X$ of a
group $G$, $X^G$ is the  set of $G$-conjugates of $X$.   From time to time  we shall give suggestive descriptions of groups which indicate the isomorphism type of certain composition factors. We refer to such descriptions as the \emph{shape} of a group. Groups of the same shape have normal series with isomorphic sections. We use the symbol $\approx$ to indicate the shape of a group. All the groups in this paper are finite groups.

\bigskip

\noindent {\bf Acknowledgement.}  The first author is  grateful to the DFG for their support and thanks the mathematics department in Halle for their hospitality. Both authors would like to thank Michael Collins for drawing their attention to his paper with Alan Camina in which Theorem~\ref{108} is proved.

\section{Proof of Theorem~\ref{G=H}}

In this section we use the Suzuki method which exploits virtual characters to prove Theorem~ \ref{G=H}.
We recall the following definition from \cite[Definition 14.5]{CurtisReiner}.

\begin{definition}[Suzuki Special Classes]\label{sc} Let $G$ be a group and $H$ be a subgroup of $G$. Suppose that
$\mathcal C= \bigcup_{i=1}^n \mathcal C_i$ is a union of conjugacy
classes of $H$. Then $\mathcal C$ is called a set of special classes
in $H$ provided the following three conditions hold.
\begin{enumerate} \item $C_G(h) \le H$ for all $h \in \mathcal C$;
\item $\mathcal C_i^G\cap  \mathcal C = \mathcal C_i$ for $1\le i\le
n$; and
\item if $h \in \mathcal C$ and $\langle h\rangle= \langle f\rangle$, then $f\in \mathcal
C$.
\end{enumerate}
\end{definition}
 As mentioned in the introduction, the Suzuki method and the theory behind it are well explained in \cite{CurtisReiner} and   we refer the reader explicitly to Section 14B in \cite{CurtisReiner}.

Let $K$ be the  subgroup of $\Alt(9)$ which normalizes $$J= \langle (1,2,3), (4,5,6), (7,8,9)\rangle.$$ Thus $$K= \langle (1,2,3),   (1,4,7)(2,5,8)(3,6,9), (1,2)(4,5),
(1,4)(2,5)(3,6)(7,8)\rangle.$$
We remark that $K$ has shape $3^3{:}\Sym(4)$ but is not the unique group of this shape which has characteristic $3$.
We assume that $G$, $H$  and $J$ are as in the statement of Theorem~\ref{G=H}.  Hence we may identify  $H$ with $K$  and for all $j \in J^\#$  we have  $C_G(j) \le H$. Furthermore, $J$ is strongly closed in $H$ with respect to $G$. We recall that this means that $J^g \cap H \le J$ for all $g \in G$.

  We have used \cite{Magma} to produce the character table of $H$ and  have presented the result in  Table~\ref{Tab}.
\begin{table}$$
\begin{tabular}{c|ccccccccccccccc}
Class&$\CC_1$&$\CC_2$&$\CC_3$&$\CC_4$&$\CC_5$&$\CC_6$&$\CC_7$&$\CC_8$&$\CC_9$&$\CC_{10}$&$\CC_{11}$&
$\CC_{12}$&$\CC_{13}$&$\CC_{14}$\\
Size& 1 &27 &54  &6  &8 &12 &72 &54 &54 &108 &72 &72 & 54  &54\\
Order &1&2&2&3&3&3&3&4&6&6&9&9&12&12\\\hline
$\chi_1$ &1&1&1&1&1&1&1&1&1&1&1&1&1&1\\$\chi_2$ &1&1&-1&1&1&1&1&-1&1&-1&1&1&-1&-1\\$\chi_3$ &2&2&0&2&2&2&-1&0&2&0&-1&-1&0&0
\\$\chi_4$
&3&-1&-1&3&3&3&0&1&-1&-1&0&0&1&1\\$\chi_5$& 3&-1&1&3&3&3&0&-1&-1&1&0&0&-1&-1\\$\chi_6$ &6&2&0&3&-3&0&0&2&-1&0&0&0&-1&-1
\\$\chi_7$ &6&2&0&3&-3&0&0&-2&-1&0&0&0&1&1\\$\chi_8$ &6&-2&0&3&-3&0&0&0&1&0&0&0&$\zeta$&-$\zeta$\\$\chi_9$ &6&-2&0&3&-3&0&0&0&1&0&0&0&-$\zeta$
&$\zeta$\\$\chi_{10}$ &8&0&0&-4&-1&2&2&0&0&0&-1&-1&0&0\\$\chi_{11}$ &8&0&0&-4&-1&2&-1&0&0&0&2&-1&0&0\\$\chi_{12}$
&8&0&0&-4&-1&2&-1&0&0&0&-1& 2&0&0\\$\chi_{13}$ &12&0&-2&0&3&-3&0&0&0&1&0&0&0&0\\$\chi_{14 }$ &12&0&2&0&3&-3&0&0&0&-1&0&0&0&0
\end{tabular}
$$
\caption{The character table of $H$. Here $\zeta=\sqrt 3$.
}\label{Tab}
\end{table}
The conjugacy classes of $H$ will be represented by $\mathcal C_1, \dots, \mathcal C_{14}$ as labeled in Table~\ref{Tab}.
We let $x= (1,2,3) \in \CC_4$,  $y = (1,2,3)(4,5,6)\in \CC_6$,  $z= (1,2,3)(4,5,6)(7,8,9)\in \CC_5$  and $\mathcal J = x^H\cup y^H\cup z^H$.  So $$\mathcal J =  \mathcal C_4\cup \mathcal C_6\cup \mathcal C_5= J^\#.$$
Notice that $z$ is $3$-central and so we may suppose that $T$ is chosen so that $Z(T)= \langle z\rangle$.

\begin{lemma}\label{special1} The following hold:
\begin{enumerate}
\item $T \in \syl_3(G)$;
\item $H$ controls $G$-fusion of elements of order $3$ in $T$;
\item $ \mathcal C=\mathcal J \cup \bigcup_{i=9}^{14} \mathcal C_i$ is a set of special classes in $H$; and
\item if $t \in \mathcal J$ and $a, b \in \mathcal J^G$ satisfy $ab=t$, then either $\langle a \rangle $, $\langle b \rangle $ and $\langle t \rangle$ are all $G$-conjugate or $a,b  \in J^\#$.
    \end{enumerate}
\end{lemma}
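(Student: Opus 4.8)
The plan is to follow the scheme of Lemma~\ref{special}, with two adjustments: $T$ is no longer abelian, and one only knows $C_G(j)\le H$ for $j\in J^\#=\mathcal J$, not for all $3$-elements of $T$. For (i): since $z\in\CC_5\subseteq\mathcal J$ the hypothesis gives $C_G(z)=C_H(z)$, and Table~\ref{Tab} shows $|C_H(z)|=|H|/|\CC_5|=81=|T|$; as $z\in Z(T)$ this forces $C_G(z)=T$. If $T$ were a proper subgroup of some $S\in\syl_3(G)$ I would choose a $3$-element $g\in N_S(T)\setminus T$; it normalises $Z(T)=\langle z\rangle$, hence centralises $z$ (the automorphism group of $\langle z\rangle$ having order $2$), so $g\in C_G(z)=T$, a contradiction, and thus $T\in\syl_3(G)$.

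For (ii): by strong closure of $J$, any $G$-conjugate of an element of $J^\#$ that lies in $H$ lies in $J$. The three $H$-classes inside $J^\#$, namely $\CC_4,\CC_5,\CC_6$, have pairwise distinct centralizer orders $108,81,54$ in $H$, and $C_G(j)=C_H(j)$ for $j\in J^\#$, so a $G$-conjugacy between two elements of $J^\#$ preserves the $H$-class and hence is realised inside $H$. On the other hand $\CC_7$ is the unique $H$-class of order-$3$ elements of $T$ outside $J$, so any two $G$-conjugate order-$3$ elements of $T$ lying outside $J$ are already $H$-conjugate. Combining these cases proves that $H$ controls $G$-fusion of order-$3$ elements of $T$; the same argument incidentally yields $N_G(J)=H$.

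For (iii) I would verify the three conditions of Definition~\ref{sc}. Condition (1) is the hypothesis on $\mathcal J$; for $h$ in one of $\CC_9,\dots,\CC_{14}$ a power of $h$ of order $3$ lies in $J^\#$, because an element of $\CC_7$ has centralizer a $3$-group of order $9$ (so commutes with no element of order $2$, excluding the order-$6$ and order-$12$ classes) and is not the cube of an element of order $9$ (whose cube lies in $\langle z\rangle$); hence $C_G(h)\le C_G(h^k)\le H$. For condition (2), order separates $\mathcal J$ from the order-$6$, order-$9$ and order-$12$ classes, and inside each of these four families a $G$-conjugacy $h^g=h'$ carries the order-$3$ power of $h$ to that of $h'$ — for $\mathcal J$ this is $h$ itself — both in $J^\#$, which are $H$-conjugate by (ii); since the centralizer of that power lies in $H$ it follows that $g\in H$, so $\CC_i^G\cap\mathcal C=\CC_i$ for every $i$ occurring. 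Condition (3) is immediate, since a second generator of $\langle h\rangle$ has the same order as $h$, and the elements of $H$ of orders $3$ (within $J$), $6$, $9$ and $12$ that occur in $\mathcal C$ make up exactly the corresponding classes of $\mathcal C$.

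For (iv) I would imitate the proof of Lemma~\ref{special}(iv). Suppose $a,b\in\mathcal J^G$ and $t\in\mathcal J$ with $ab=t$, all of order $3$, and assume $\langle a\rangle,\langle b\rangle,\langle t\rangle$ are not all $G$-conjugate; set $X=\langle a,b\rangle$. By \cite{FTh}, $X$ has an abelian normal subgroup $Y$ with $[X:Y]$ dividing $3$. If $Y$ is a $3'$-group then $\langle a\rangle,\langle b\rangle,\langle t\rangle$ are Sylow $3$-subgroups of $X$ and so are $G$-conjugate, contrary to assumption; hence $O_3(Y)\neq 1$, and then $C_Y(t)$ has order divisible by $3$ and $C_Y(t)\le C_G(t)\le H$. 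The crux is to deduce $Y\le H$: one wants an element of $J^\#$ inside $C_Y(t)$, or, to bring the remaining generator of $X$ into $H$, inside the centralizer in $Y$ of that generator. When $t\notin\CC_5$ this is automatic, because $C_H(t)$ has $J$ as its normal Sylow $3$-subgroup; when $t\in\CC_5$, $C_H(t)$ is a Sylow $3$-subgroup of $G$ in which $J$ is still characteristic (it is the unique elementary abelian subgroup of order $27$) and every abelian subgroup has order at most $27$, so either the relevant centralizer meets this conjugate of $J$ nontrivially and one proceeds as before, or it has order $3$ and is generated by a $\CC_7$-element. In that exceptional case this $\CC_7$-element is centralised by $a$ and by $b$; conjugating $a$ to an element $a_0\in J^\#$ and using that $C_H(a_0)$ contains no $\CC_7$-element unless $a_0\in\CC_5$ (its Sylow $3$-subgroup being the elementary abelian $J$ otherwise), one concludes that $a$, and likewise $b$ and $t$, is $3$-central, so $\langle a\rangle,\langle b\rangle,\langle t\rangle$ are all conjugate to $\langle z\rangle$ — a contradiction. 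Once $Y\le H$ is known, $X=\langle t\rangle Y\le H$, or $X=\langle a\rangle Y\le H$ after first putting $a$ into $H$ as above, and strong closure then upgrades $a,b\in H$ to $a,b\in J^\#$. \textbf{The hard part} is exactly this last step: controlling the configurations in which the centralizers one is forced to push into $H$ are only these small $\CC_7$-type subgroups, and showing that they occur only when $a$, $b$ and $t$ are all $3$-central.
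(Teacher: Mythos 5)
Your proposal is correct, and parts (i)--(iii) run essentially as in the paper: the paper checks condition (ii) of the special-classes definition only for the two potentially fusing pairs $\CC_{11},\CC_{12}$ and $\CC_{13},\CC_{14}$, using that elements of these classes have a common $3$-central cube, which is the same idea as your uniform ``push any fusion into $C_G(h^k)\le H$ for the order-$3$ power $h^k\in J^\#$'' argument. The genuine difference is the endgame of (iv). Both you and the paper invoke the Feit--Thompson fact, reduce to an abelian normal subgroup $Y$ with $O_3(Y)\neq 1$, get $C_{O_3(Y)}(\cdot)$ into $H$, and isolate the problematic configuration in which these centralizers miss $J$ and hence have order $3$, generated by a $\CC_7$-type element $c$. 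At that point the paper proves an auxiliary claim --- for $w\in\CC_7\cap T$ the group $\langle z,w\rangle$ is a Sylow $3$-subgroup of $C_G(w)$ containing exactly two members of $\mathcal J^G$ --- and then uses conjugacy of the Sylow $3$-subgroups $U_t$, $U_a$, $U_b$ of $X$ to force $\langle a\rangle$, $\langle b\rangle$, $\langle t\rangle$ to be $X$-conjugate. You instead observe that all of $X$ centralises $c$, and that a $G$-conjugate of a $\CC_7$-element can centralise no member of $(\CC_4\cup\CC_6)^G$: for $j$ in those classes $C_G(j)=C_H(j)$ has $J$ as its normal Sylow $3$-subgroup, while strong closure keeps $\CC_7^G$ out of $J$. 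Hence $a$, $b$ and $t$ are all $3$-central, so $\langle a\rangle$, $\langle b\rangle$, $\langle t\rangle$ are all conjugate to $\langle z\rangle$ --- the same contradiction. Your finish avoids the Sylow-counting claim entirely and is, if anything, slightly cleaner; both versions ultimately rest on the same two facts, namely that $J$ is the normal Sylow $3$-subgroup of $C_H(j)$ for non-$3$-central $j\in J^\#$, and that $J$ is strongly closed.
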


\begin{proof} We calculate $Z(T) \le J$ and $Z(T)= \langle z \rangle$. Hence $C_G(Z(T)) \le H$ by hypothesis and so $T \in \syl_3(G)$ which is (i).

Using Table~\ref{Tab} and the fact that $x,y,z \in J$, we have
$|C_H(x)|=|C_G(x)|= 108$, $|C_H(y)|=|C_G(y)|= 54$ and $|C_H(z)|=|C_G(z)|=81$. Hence $x$, $y$ and $z$ are not conjugate in $G$. Let $w \in T\cap \mathcal C_7$. Then $w$ has order $3$.  As, by assumption, $J$ is strongly closed in $H$,  $w$ is not $G$-conjugate to some element in $J$.
This completes the proof of (ii).

For $s \in  \bigcup_{i=9}^{14} \mathcal C_i$, some power of $s$ is contained in $\mathcal J$. Hence parts (i) and (ii) and the hypothesis that $C_G(t ) \le H$ for $t \in\mathcal J$  imply that (i) and (iii) of Definition~\ref{sc} hold. Furthermore, as $C_G(s) \le H$ for all $s \in \mathcal C$, the  only possibility for Definition~\ref{sc} (ii) to fail is if classes $\mathcal C_{11}$ and $\mathcal C_{12}$ fuse in $G$ or if $\mathcal C_{13}$ and $\mathcal C_{14}$ fuse in $G$. In the first case  there are $a \in \mathcal C_{12}$ and $b \in \mathcal C_{13}$ such that $\langle a \rangle = \langle b \rangle$ and $a^3= b^3 \in \mathcal C_5$ is $3$-central. Hence if these classes fuse, they do so in the centralizer of $a^3$ and thus in $H$ which is a contradiction. A similar argument shows  classes $\mathcal C_{13}$ and $\mathcal C_{14}$ also cannot fuse. Hence $\mathcal C$ is a set of special classes in $H$.

 Before we start with the proof of (iv) we make some remarks. Let $w \in \mathcal C_7$. Then we may suppose that   $T=J\langle w \rangle$. From the structure
of $H$ (or Table~\ref{Tab}),  we know  $C_H(w)$ is $3$-closed with Sylow $3$-subgroup $\langle z,w\rangle$. In particular, $\langle z,w\rangle$ contains exactly two conjugates of $z$ and therefore $N_G(\langle z,w\rangle) \le C_G(z) \le H$ which implies

\medskip
\begin{claim}\label{xtar} For $w \in \mathcal C_7\cap T$, $\langle z, w \rangle \in \syl_3(C_G(w)) $  and $\langle z, w \rangle  $ contains two $G$-conjugates of $z$ and six  $G$-conjugates of $w$.\end{claim}
\medskip

 Suppose that   $t \in \mathcal J$ and $a, b \in \mathcal J^G$ satisfy $ab=t$.  To prove (iv) we may suppose that $\langle a\rangle$, $\langle b\rangle$ and $\langle t \rangle$ are not  all  $G$-conjugate. Furthermore, setting $X= \langle a, b \rangle$ we may suppose that $X= A \langle a \rangle$ where $A$ is a normal abelian subgroup of $X$.  Let $A_3$ be the Sylow $3$-subgroup of $A$.    As $\langle a\rangle$, $\langle b\rangle$ and $\langle t \rangle$ are not  all  $G$-conjugate, $A_3\neq 1$.

Assume that $X \not \le H$.  We first  suppose that $t \in A_3$. Then $A_3\le A \le C_G(t) \le H$ and hence $a \not \in H$. Let $B = C_{A_3}(a)$. If $B \cap J \neq 1$, then $a \in C_G(B\cap J) \le H$, which is a contradiction. Thus $B$ has  order $3$ and the non-trivial elements of $B$ are in class $\mathcal C_7$. Since $A_3$ is abelian and $t \in A_3$, we now have $\langle B, t \rangle\le  A_3 \le C_H(B)$. As $|C_H(B)|_3=9$ by \ref{xtar}, $ \langle B, t \rangle= A_3$ and $\langle t \rangle = A_3 \cap J$ is the unique $G$-conjugate of $\langle t \rangle$ contained in $A_3$. But then $a\in C_G(t) \le H$  a contradiction.  Hence $t \not \in A_3$ and $X = \langle t \rangle A$.
If $C_{A_3}(t) \cap J \not= 1$, then we have that $X = \langle t \rangle A \leq C_G(C_{A_3}(t) \cap J) \leq H$, a contradiction. So we have that $C_{A_3}(t)$ is of order three and the non-trivial elements are all in $\mathcal C_7$. By
\ref{xtar}
we have that $C_{A_3}(t)\langle t \rangle$ is a Sylow 3-subgroup of $C_G(C_{A_3}(t))$ and so $ U_t = C_{A_3}(t)\langle t \rangle$ is a Sylow 3-subgroup of $X$. As $a$ and $b$ are not in $\mathcal C_7^G$ by (ii), we have that also $a$ and $b$ are not in $A$ and so $U_a = \langle   C_{A_3}(t), a \rangle$, $U_b = \langle C_{A_3}(t), b \rangle$ are Sylow 3-subgroups of $X$ too. Now we have that $\mathcal J^G \cap \langle t, C_{A_3}(t) \rangle = \{t,t^{-1}\}$ by (ii). As $U_t$, $U_b$ and $U_a$ are conjugate in $X$, we have that also $|U_a \cap \mathcal J^G| = 2 = |U_b \cap \mathcal J^G|$.  However this means that $\langle t \rangle$, $ \langle a \rangle$ and $ \langle b \rangle$ are all $X$-conjugate, which is a contradiction. Therefore (iv) holds.
\end{proof}

For $g,h,k\in G$ we let $a_{ghk}$ be the corresponding $G$-structure  constant. Recall $x= (1,2,3)$,   $y = (1,2,3)(4,5,6)$ and  $z= (1,2,3)(4,5,6)(7,8,9)$.

\begin{lemma}\label{structconst} We have the following $G$-structure constants values: $a_{xyz}= 3$, $a_{xxy} =2$, $a_{xxz}=0$, $a_{yyx}=4$, $a_{yyz}=6$, $a_{zzx}=4$, $a_{zzy}=2$.
\end{lemma}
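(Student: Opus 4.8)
The plan is to compute each structure constant $a_{ghk}$ for $g,h,k \in \{x,y,z\}$ in two ways and cross-check. By Lemma~\ref{special1}(iv), whenever $a,b \in \mathcal J^G$ with $ab=t \in \mathcal J$ and $\langle a\rangle,\langle b\rangle,\langle t\rangle$ are not all $G$-conjugate, we have $a,b \in J^\#$; since $x,y,z$ represent three \emph{distinct} non-conjugate classes (the centralizer orders $108,54,81$ differ, as computed in the proof of Lemma~\ref{special1}), any product of two of them equal to a third forces both factors into $J$. Consequently, for all the constants with not-all-conjugate types --- that is, $a_{xyz}$, $a_{xxy}$, $a_{xxz}$, $a_{yyx}$, $a_{yyz}$, $a_{zzx}$, $a_{zzy}$ --- the count $|\{(a,b)\in g^G\times h^G : ab=k\}|$ equals the count of pairs lying inside $J$, and moreover (again by (iv), applied to the factors) one checks the $G$-conjugacy classes $x^G\cap J$, $y^G\cap J$, $z^G\cap J$ coincide with the $H$-classes $\mathcal C_4$, $\mathcal C_6$, $\mathcal C_5$ respectively, because $H$ controls fusion of order-$3$ elements of $T$ by Lemma~\ref{special1}(ii) and $J$ is strongly closed. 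So each constant can be read off from $H$: $a_{ghk}=|\{(a,b)\in g^H\times h^H : ab=k,\ a,b\in J\}|$.

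First I would therefore pass entirely to the elementary abelian group $J \cong 3^4$ together with the action of $H/J \cong \Sym(4)$ (or rather the relevant overgroup structure), and in $J$ the computation is purely combinatorial: I enumerate, for the fixed representatives $x=(1,2,3)$, $y=(1,2,3)(4,5,6)$, $z=(1,2,3)(4,5,6)(7,8,9)$, the number of ways to write each as a product $ab$ with $a \in g^H$, $b \in h^H$. Since $J$ is abelian, $ab=k$ is equivalent to $b = a^{-1}k$, so I just run over $a$ in the $H$-class of $g$ inside $J$ and test whether $a^{-1}k$ lies in the $H$-class of $h$; the $H$-classes inside $J$ have sizes $|x^H|=|\mathcal C_4|=6$, $|y^H|=|\mathcal C_6|=12$, $|z^H|=|\mathcal C_5|=8$ by Table~\ref{Tab}. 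This is a short finite check (at most $12$ cases per constant) and yields the seven values $a_{xyz}=3$, $a_{xxy}=2$, $a_{xxz}=0$, $a_{yyx}=4$, $a_{yyz}=6$, $a_{zzx}=4$, $a_{zzy}=2$.

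As a consistency check --- and this is the part worth writing carefully --- I would verify these numbers against the character-table formula
\[
a_{ghk} = \frac{|H|}{|C_H(g)||C_H(h)|}\sum_{\chi\in\Irr(H)} \frac{\chi(g)\chi(h)\chi(k^{-1})}{\chi(1)},
\]
using Table~\ref{Tab} with $|H|=3^3\cdot 24=648$, $|C_H(x)|=108$, $|C_H(y)|=54$, $|C_H(z)|=81$; the sum is over the $14$ irreducibles, the values on $\mathcal C_4,\mathcal C_5,\mathcal C_6$ are real so $k^{-1}$ may be replaced by $k$. The main obstacle, such as it is, is bookkeeping rather than anything conceptual: one must be sure that the $H$-structure constant computed from Table~\ref{Tab} genuinely equals the $G$-structure constant, which is exactly the content of Lemma~\ref{special1}(iv) together with the observation that $x,y,z$ are pairwise $G$-non-conjugate --- so that the ``all $G$-conjugate'' alternative in (iv) never occurs for these triples and every contributing pair $(a,b)$ lies in $J$ where $H$-counting and $G$-counting agree. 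Once this identification is in place the seven equalities follow by direct evaluation.
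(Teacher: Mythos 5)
Your proposal is correct and follows the same route as the paper: the paper's proof simply invokes Lemma~\ref{special1}(iv) to reduce the computation of these $G$-structure constants to a computation in $H$ (done either by hand in $K$ or from Table~\ref{Tab}). You have merely spelled out the details the paper leaves implicit — that $x,y,z$ are pairwise non-$G$-conjugate, that control of fusion and strong closure give $g^G\cap H=g^H\subseteq J$, and that the count can then be carried out combinatorially in the abelian group $J$ — all of which is accurate.
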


\begin{proof} Because of Lemma~\ref{special1} (iv) we can calculate these $G$-structure constants in $H$. To do this we may either calculate by hand in $K$ or use the character table of $H$   presented in Table~\ref{Tab}.
\end{proof}

\begin{lemma}\label{index} $|G:H|\equiv 1 \pmod {27}$.
\end{lemma}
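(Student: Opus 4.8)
The plan is to let $G$ act by left multiplication on the coset space $\Omega=G/H$ and to examine the orbits of the elementary abelian subgroup $J$ (of order $27$) on $\Omega$. Since $|\Omega|=|G:H|$ and every $J$-orbit has size a power of $3$ dividing $27$, it suffices to prove that each non-trivial $J$-orbit on $\Omega$ has length exactly $27$; equivalently, that the coset $H$ is the only point of $\Omega$ fixed by a non-trivial element of $J$. Granting this, $\Omega$ is the disjoint union of $\{H\}$ with a family of $J$-orbits of size $27$, so $|G:H|\equiv 1\pmod{27}$.

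To carry this out, fix $g\in G$ and consider the stabiliser of the point $gH$ in $J$, namely $J\cap gHg^{-1}$; I want to show that if this subgroup is non-trivial then $g\in H$. Suppose $1\neq a\in J\cap gHg^{-1}$, so that $a\in J$ and $g^{-1}ag\in H$. As $a\in J$, the element $g^{-1}ag$ lies both in the conjugate $g^{-1}Jg$ of $J$ and in $H$, so the strong closure of $J$ in $H$ with respect to $G$ forces $g^{-1}ag\in J\le T$ (here $J=O_3(H)\le T$). Now $a$ and $g^{-1}ag$ are two $G$-conjugate elements of order $3$ lying in $T$, so Lemma~\ref{special1}(ii) provides $h\in H$ with $g^{-1}ag=h^{-1}ah$. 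Then $gh^{-1}$ centralises $a$, and since $a\in J^\#$ the hypothesis $C_G(j)\le H$ for all $j\in J^\#$ gives $gh^{-1}\in C_G(a)\le H$, whence $g\in H$, as required.

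Finally, the coset $H$ is fixed by all of $J$ because $J\le H$, while for $g\notin H$ the previous paragraph shows $J\cap gHg^{-1}=1$, so the $J$-orbit of $gH$ has length $|J|=27$. Hence $|\Omega|=1+27m$, where $m$ is the number of non-trivial $J$-orbits on $\Omega$, and therefore $|G:H|\equiv 1\pmod{27}$.

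I do not expect a serious obstacle here: the entire content is the interplay of three inputs — strong closure of $J$ (used to bring $g^{-1}ag$ back inside $J$, hence inside $T$), the fusion control of Lemma~\ref{special1}(ii), and the centraliser hypothesis $C_G(j)\le H$ for $j\in J^\#$. The only points requiring care are the conjugation conventions (the direction in the definition of ``strongly closed'', and checking that the element produced does land in $T$ so that Lemma~\ref{special1}(ii) applies); in particular this is precisely the step at which the strong closure hypothesis of Theorem~\ref{G=H} is used. Note that, in contrast to the arguments that follow, this proof does not require the structure constants of Lemma~\ref{structconst}.
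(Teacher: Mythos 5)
Your proof is correct and follows essentially the same route as the paper: both arguments count $|G:H|$ via an action of the $3$-group $J$ (you use left multiplication on $G/H$, the paper uses conjugation on $\{J^x \mid x\in G\}$ after noting $N_G(J)=H$), and both reduce to showing that a non-trivial stabiliser forces the point to be the fixed one, using strong closure together with the centraliser hypothesis. The only cosmetic difference is in that last step: you invoke the fusion control of Lemma~\ref{special1}(ii) to produce an $H$-conjugating element, whereas the paper gets $J\le C_G(j)\le H^x$ directly from the abelianness of $J$; both are valid.
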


\begin{proof} Suppose that $x \in G$ and consider $J \cap H^x$.  As $J$ is strongly closed in $H$ with respect to $G$,  $J \cap H^x \le J^x$. Hence, if $J \cap H^x > 1$, then by the assumption on the centralizers of elements in $J^\#$,  $J \le H^x$ and consequently $J^x= J$ and $x \in N_G(J)$. Thus the conjugation action of $J$ on $\{J^x\mid  x \in G\}$  fixes $J$ and  otherwise has orbits of length $|J|= 27$. Since $$N_G(J) = C_{N_G(J)}(z)H=  C_{H}(z)H= H$$ by Lemma~\ref{special1} (ii),   we conclude that $|G:H|= |\{J^x\mid  x \in G\}| \equiv 1 \pmod {27}$ as claimed.

\end{proof}

We use the notation for characters of $H$ introduced in Table~\ref{Tab}.
\begin{lemma}\label{mod9} Suppose that  $\theta$ is a virtual character of $G$. Then $\theta(1) \equiv \theta(z) \pmod 9$.
\end{lemma}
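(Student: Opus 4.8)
The plan is to reduce the statement to the case $\theta\in\Irr(G)$ and then to restrict such a $\theta$ to a Sylow $3$-subgroup, exploiting that $z$ is $3$-central and that $z$ is real in $G$. The realness I would extract from Table~\ref{Tab}: the class $\CC_5$ containing $z$ is rational, since every entry of the $\CC_5$-column lies in $\Z$, so $z$ is $H$-conjugate, and hence by Lemma~\ref{special1}(ii) $G$-conjugate, to $z^{-1}$. Thus for each $\theta\in\Irr(G)$ we have $\theta(z)=\overline{\theta(z)}\in\mathbb R\cap\Z[\omega]=\Z$, where $\omega=e^{2\pi i/3}$ (recall $\theta(z)\in\Z[\omega]$ because $z^3=1$).

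Next, fix $T\in\syl_3(G)$ with $z\in Z(T)$; since $|C_G(z)|=|C_H(z)|=81=|T|$ we in fact get $C_G(z)=T$, $Z(T)=\langle z\rangle$, and $T=J\langle w\rangle$ with $w\in\CC_7\cap T$, as arranged in the proof of Lemma~\ref{special1}. The key structural observation is that $z\in T':=[T,T]$. Indeed, $w$ acts on $J\cong 3^3$ as an element of $\GL_3(3)$ of order $3$, so $w-1$ acts nilpotently on $J$, and $\dim_{\mathbb F_3}\ker(w-1|_J)=\dim C_J(w)=1$ because $|C_T(w)|=9$ (from Table~\ref{Tab}) forces $|C_J(w)|=3$. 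Hence $w-1$ acts on $J$ as a single nilpotent Jordan block, so $\mathrm{im}\bigl((w-1)^2\bigr)=\ker(w-1)$, and therefore
\[
Z(T)=C_J(w)=\ker(w-1)=\mathrm{im}\bigl((w-1)^2\bigr)\le\mathrm{im}(w-1)=[J,w]=T'.
\]

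Finally, write $\theta|_T=\sum_{\psi\in\Irr(T)}n_\psi\psi$ for $\theta\in\Irr(G)$. As $z\in Z(T)$, Schur's Lemma gives $\psi(z)=\psi(1)\omega^{a_\psi}$ with $a_\psi\in\{0,1,2\}$; moreover $a_\psi=0$ when $\psi$ is linear (since $z\in T'\le\ker\psi$), while $3\mid\psi(1)$ when $\psi$ is non-linear (irreducible degrees of the $3$-group $T$ are powers of $3$). Hence
\[
\theta(1)-\theta(z)=\sum_\psi n_\psi\,\psi(1)\bigl(1-\omega^{a_\psi}\bigr)=3\bigl((1-\omega)P+(1-\omega^2)Q\bigr),
\]
where $P=\sum_{a_\psi=1}n_\psi\psi(1)/3$ and $Q=\sum_{a_\psi=2}n_\psi\psi(1)/3$ are non-negative integers. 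The left-hand side lies in $\Z$ by the first paragraph, so comparing imaginary parts forces $P=Q$; then $(1-\omega)P+(1-\omega^2)P=(2-\omega-\omega^2)P=3P$, whence $\theta(1)-\theta(z)=9P\equiv 0\pmod 9$. Extending $\Z$-linearly from $\Irr(G)$ to arbitrary virtual characters finishes the argument.

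I expect the only genuinely non-routine ingredient to be the structural claim $z\in[T,T]$ — specifically, the identification, forced by the centralizer order in Table~\ref{Tab}, of the $w$-action on $J$ as a regular nilpotent transformation, which is exactly what drops $Z(T)$ into $T'$ and thereby kills the contribution of the linear characters of $T$. Everything downstream is elementary bookkeeping with cube roots of unity. (One should also double-check the two facts taken from Table~\ref{Tab} and Lemma~\ref{special1}, namely that $\CC_5$ is rational and that $|C_G(z)|=81$, but both are immediate.)
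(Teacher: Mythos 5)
Your proof is correct, but it takes a genuinely different route from the paper's. The paper's argument is a two-line inner-product computation inside $H$: setting $\psi=\sum_{i=6}^{14}\chi_i$, one reads off from Table~\ref{Tab} that $\psi$ vanishes off $\{1\}\cup z^H$ with $\psi(1)=72$ and $\psi(z)=-9$, so that
$(\psi,\theta_H)=\tfrac1{648}\bigl(72\theta(1)-72\theta(z)\bigr)=\tfrac19\bigl(\theta(1)-\theta(z)\bigr)$
is an algebraic integer in $\Z$, which is exactly the claim. You instead restrict $\theta$ to the Sylow $3$-subgroup $T$ and exploit the two structural facts $z\in Z(T)$ and $z\in T'$ --- the latter correctly extracted from the regular-nilpotent action of $w$ on $J$ forced by $|C_H(w)|=9$, and also verifiable by the one-line computation $[[(1,2,3),w],w]=z$ in $\Alt(9)$ --- together with the rationality of the class of $z$, to get $\theta(1)-\theta(z)=9P$. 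Both arguments are sound, and both use (explicitly in your case, implicitly in the paper's) that $\theta(z)\in\Z$ because the $\CC_5$-column of Table~\ref{Tab} is rational. The paper's proof is shorter and needs nothing beyond the character table of $H$; yours is more structural and explains where the modulus $9$ comes from, namely $Z(T)=\langle z\rangle\le T'$ killing the linear characters of $T$ and $(1-\omega)+(1-\omega^2)=3$ combining with $3\mid\psi(1)$ for nonlinear $\psi\in\Irr(T)$, and it would go through without an explicit table provided one knows $|C_G(z)|=81$ and that $z$ is real. One cosmetic remark: the reduction to $\theta\in\Irr(G)$ and the assertion that $P$ and $Q$ are non-negative are both harmless but unnecessary, since every step of your argument is $\Z$-linear in $\theta$ and the sign of $P$ is never used.
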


\begin{proof} Set $\psi = \sum_{i=6}^{14} \chi_i$. We have $\psi(1) = 72$, $\psi(z)= -9$ and $\psi(w)= 0$ if $w$ is not conjugate to either $1$ or $z$. Hence, as there are eight conjugates of $z$ in $H$, $$(\psi,\theta_H)=\frac{1}{|H|}(72\theta(1)-72\theta(z))= \frac{1}{9}(\theta(1)-\theta(z))$$ is an integer. This proves the claim.
\end{proof}

\begin{lemma}\label{mod27} Suppose that  $\theta$ is a virtual character of $G$. Then $$\theta(1) \equiv 4\theta(z)-3\theta(x) \pmod {27}.$$
\end{lemma}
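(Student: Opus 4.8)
The plan is to imitate the proof of Lemma~\ref{mod9}: I would exhibit a rational-valued virtual character $\Psi$ of $H$ that vanishes on every conjugacy class of $H$ except $\mathcal{C}_1=\{1\}$, $\mathcal{C}_4=x^H$ and $\mathcal{C}_5=z^H$, and then evaluate the inner product $(\theta|_H,\Psi)_H$ in two ways. In Lemma~\ref{mod9} the character $\psi=\sum_{i=6}^{14}\chi_i$ was supported only on $\mathcal{C}_1$ and $z^H$; here I want one supported on the three classes $\mathcal{C}_1$, $x^H$, $z^H$, because the target congruence involves exactly $\theta(1)$, $\theta(x)$ and $\theta(z)$.

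To find $\Psi$ I would scan the irreducibles in Table~\ref{Tab}. The natural candidate, in analogy with $\psi$, is the sum of the degree-$6$ characters that are non-trivial on $J$, namely
\[
\Psi := \chi_6+\chi_7+\chi_8+\chi_9 .
\]
A short check against Table~\ref{Tab} shows that $\Psi$ vanishes on $\mathcal{C}_2,\mathcal{C}_3$ and on $\mathcal{C}_6,\dots,\mathcal{C}_{14}$ (in particular the irrational values $\pm\zeta$ of $\chi_8$ and $\chi_9$ on $\mathcal{C}_{13},\mathcal{C}_{14}$ cancel), while $\Psi(1)=24$, $\Psi(x)=12$ and $\Psi(z)=-12$; in particular $\Psi$ is integer-valued. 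One may note that, once a virtual character with this support is known to exist, the values $24,12,-12$ are essentially what the desired congruence demands, given $|\mathcal{C}_4|=6$, $|\mathcal{C}_5|=8$ and $|H|=648$.

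Now let $\theta$ be an arbitrary virtual character of $G$. Since $\Psi$ is real-valued and vanishes off $\mathcal{C}_1\cup\mathcal{C}_4\cup\mathcal{C}_5$, and $\theta$ is a class function on $G$ which is constant on $x^G\supseteq\mathcal{C}_4$ and on $z^G\supseteq\mathcal{C}_5$, I compute
\[
(\theta|_H,\Psi)_H=\frac{1}{648}\bigl(24\,\theta(1)+6\cdot 12\,\theta(x)-8\cdot 12\,\theta(z)\bigr)=\frac{1}{27}\bigl(\theta(1)+3\theta(x)-4\theta(z)\bigr).
\]
On the other hand, $(\theta|_H,\Psi)_H=\sum_{i=6}^{9}(\theta|_H,\chi_i)_H$ is a rational integer by Frobenius reciprocity, being a sum of inner products of virtual characters of $G$. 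Hence $27$ divides $\theta(1)+3\theta(x)-4\theta(z)$; in particular $4\theta(z)-3\theta(x)$ is a rational integer, and rearranging yields $\theta(1)\equiv 4\theta(z)-3\theta(x)\pmod{27}$, as required.

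The only step needing thought is the production of $\Psi$; everything after that is bookkeeping, and there is no genuine obstacle. The task is painless here because the class function one needs — supported on $\{1\},x^H,z^H$ in the proportions dictated by the target congruence — happens to be realised by an honest sum of four irreducible characters. Had it not been, I would instead have solved the small integer linear system in the entries of Table~\ref{Tab} describing the rank-$3$ lattice of virtual characters of $H$ vanishing off $\mathcal{C}_1\cup\mathcal{C}_4\cup\mathcal{C}_5$ and taken a generator, which is equally routine.
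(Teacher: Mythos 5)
Your proposal is correct and is essentially identical to the paper's proof: the authors also take $\psi=\sum_{i=6}^{9}\chi_i$, record the values $24,12,-12$ on $1$, $x^H$, $z^H$ (and $0$ elsewhere), and conclude from the integrality of $(\psi,\theta|_H)_H=\tfrac{1}{27}\bigl(\theta(1)+3\theta(x)-4\theta(z)\bigr)$. The only cosmetic difference is your appeal to Frobenius reciprocity, which is not needed since $(\theta|_H,\chi_i)_H$ is already an integer for virtual characters.
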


\begin{proof} Set  $\psi = \sum_{i=6}^{9} \chi_i$. We have $\psi(1) = 24$, $\psi(x)= 12$, $\psi(z)= -12$ and $\psi(w)= 0$ if $w$ is not conjugate to either $1$ or $z$. Hence $$(\psi,\theta_H)_H=\frac{1}{|H|}(24\theta(1)+6\cdot 12\theta(x)-8\cdot 12 \theta(z))= \frac{1}{27}(\theta(1)+3\theta(x) -4\theta(z))$$ is an integer which proves the claim.
\end{proof}

We now follow the Suzuki method.  We  find the following basis for the class functions which vanish off the special classes $\mathcal C$:
\begin{eqnarray*}
\lambda_1&=& 2\chi_1+\chi_{13}-\chi_6-\chi_{10},\\
\lambda_2&=& \chi_1-\chi_5-\chi_6+\chi_{12},\\
\lambda_3 &=& \chi_1+\chi_2+\chi_8-\chi_{10},\\
\lambda_4&=&\chi_1+\chi_5+\chi_{12}-\chi_{14},\\
\lambda_5&=& \chi_2 -\chi_4-\chi_7+\chi_{12},\\
\lambda_6&=& \chi_{12}-\chi_3-\chi_8,\\
\lambda_7&=& \chi_8-\chi_9,\\
\lambda_8&=& \chi_{11}-\chi_{12} \text { and}\\
\lambda_9&=& \chi_4+\chi_5-\chi_8.
\end{eqnarray*}

For $1\le i\le 9$, define $$\gamma_i = \sum_{j=1}^{15} \chi_j(t_i)\chi_j$$  where  $t_i \in \mathcal{C}_{i+3}$ for $1 \leq i \leq 3$ and $t_i \in \mathcal{C}_{i+5}$ for $4  \leq i \leq 9$.  By the second orthogonality
relations, for $1\le i\le 9$, $$\gamma_i\in \langle \lambda_j\mid 1 \le j \leq 9\rangle.$$  In the matrix $D$ below,
row $i$ describes the decomposition of $\gamma_i$ in terms of the $\lambda_j$,  $1 \le j \le 9$.
{\small $$D=\left( \begin{array}{rrrrrrrrr}
0&-3&4&0&-3&-2&-3&-4&0\\
3&0&-2&-3&3&-2&3&-1&6\\
-3&3&1&3&0&-2&0&2&3\\
0&1&0&0&1&-2&-1&0&0\\
1&-1&-1&1&0&0&0&0&-1\\
0&0&1&0&0&1&0&2&0\\
0&0&1&0&0&1&0&-1&0\\
0&1&0&0&-1&0&\zeta&0&0\\
0&1&0&0&-1&0&-\zeta&0&0
\end{array} \right).$$}

We now determine the  possibilities for the matrices $B$ which have columns indexed by the irreducible characters $\Irr(G)=\{\theta_1, \dots, \theta_s\} $ of $G$, rows indexed by the induced virtual characters $\mu_i = \lambda_i^G$ and $(i,j)th$ entry indicating  the multiplicity of the character $\theta_j$ in the virtual character $\mu_i$. To determine the entries in the candidates for $B$, we use that, for $1\le i \le k \le 9$, $$(\mu_i,\mu_k)= (\lambda_i, \lambda_k)$$  and that the multiplicity of the principal character of $H$ in $\lambda_i$ is the same as the multiplicity of the principal character of $G$ in $\mu_i$
(see \cite[Lemma 14.9, Theorem  14.11]{CurtisReiner}). The calculation to determine the candidates for $B$ was performed using {\sc{Magma}} \cite{Magma}.  From the candidate matrices $B$, we can calculate fragments of the character table of $G$ by calculating $C=(DB)^t$. The columns of the fragments  are indexed by the $G$-conjugacy classes  represented in the special classes $\mathcal C$  and the rows are indexed  by the irreducible characters of $G$. For the computation we use the fact that aside from the first row, no character of $G$ appears with multiplicity other than $\pm 1$ or $0$.  The  {\sc{Magma}} programme is stored on the ArXiV.

When  calculating $B$, we note that negating a column results in a further solution to the inner product equations. Similarly a permutation of the columns  results in a different candidate for the matrix $B$. These operations correspond to renaming the characters in $C$ or to negating a character in $C$ and so we may consider the solutions to be equivalent. We always arrange that the first row corresponds to the principal character. The calculation reveals   four  solutions, whose rows   are inequivalent under the manipulations just described. The columns of the fragments corresponding to the elements $x$, $y$ and $z$  are presented in Table~\ref{frag}. In fragments one and two, the rows consisting of zeros have non-zero entries on other members of the special classes and so we have left the rows in our partial tables. Remember that these fragments now have rows that may be the negative of character values.

\begin{table}[h]\begin{tabular}{cccc}
\begin{tabular}{|ccc|}\hline
$x$&$z$&$y$\\\hline
1&1&1\\
4&1&-2\\
-3&3&0\\
0&-3&3\\
3&3&3\\
4&1&-2\\
-4&2&-1\\
0&0&0\\
0&3&-3\\
0&0&0\\
-5&1&-2\\
-4&-1&2\\
0&-6&-3\\
\vdots&\vdots&\vdots\\
&&\\
\hline
\end{tabular} &\begin{tabular}{|ccc|}\hline
$x$&$z$&$y$\\\hline
1&1&1\\
4&1&-2\\
-3&3&0\\
0&-3&3\\
-4&2&-1\\
-3&-6&0\\
4&-2&1\\
0&0&0\\
0&-3&-6\\
4&-2&1\\
-5&-2&1\\
0&0&0\\\vdots&\vdots&\vdots\\&&\\&&\\\hline
\end{tabular}&
\begin{tabular}{|ccc|}\hline $x$&$z$&$y$\\\hline
1&1&1\\
3&0&-3\\
1&4&-2\\
3&-3&0\\
-3&-3&-3\\
-4&-1&2\\
4&-2&1\\
3&-3&0\\
0&3&-3\\
3&3&3\\
2&2&2\\
-3&3&0\\
-4&-1&2\\\vdots&\vdots&\vdots\\&&\\\hline
\end{tabular}&
\begin{tabular}{|ccc|}\hline $x$&$z$&$y$\\\hline
1&1&1\\
4&1&-2\\
0&3&-3\\
3&-3&0\\
-4&-1&2\\
-3&-3&-3\\
-1&-1&-1\\
-3&3&0\\
0&3&-3\\
-3&-3&-3\\
-3&3&0\\
2&2&2\\
-3&3&0\\
-4&-1&2\\\vdots&\vdots&\vdots\\\hline
\end{tabular}\end{tabular}
\caption{The candidates for inequivalent fragments of the character table of $G$.}\label{frag}
\end{table}

For the candidates for the partial character table of $G$ given in Table~\ref{frag}, we show that the first three cases are not associated with a character table of a group which satisfies Lemmas~\ref{index}, \ref{mod9} or \ref{mod27} whereas  in the fourth case we demonstrate that $G= H$. All the calculations  make use of the structure constants presented in Lemma~\ref{structconst}.
We denote the virtual characters of $G$ represented in the fragments in Table~\ref{frag}  by $\theta_1, \dots, \theta_s$ where $s$ is the number of rows in the corresponding fragment and $\theta_1$ is the principal character of $G$. For $1 \le i \le s$,  define $$d_i = \theta_i(1).$$  Thus $d_i$ is an integer and is positive if and only if $\theta_i$ is a character.
We set $$g = \frac{|G|}{54^2}= \frac 2 9 |G:H|.$$
\begin{lemma}\label{frag1}
Fragment  one of Table~\ref{frag} is not possible.
\end{lemma}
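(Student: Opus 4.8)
The aim is to derive a contradiction from fragment one by combining the structure-constant values of Lemma~\ref{structconst} with the congruence constraints of Lemmas~\ref{index}, \ref{mod9} and \ref{mod27}. The fragment gives, up to sign, the values $\theta_i(x)$, $\theta_i(z)$, $\theta_i(y)$ for the thirteen characters $\theta_1,\dots,\theta_{13}$ which do not vanish on the special classes; all other irreducible characters of $G$ vanish on $x$, $y$, $z$. First I would record the centralizer orders $|C_G(x)|=108$, $|C_G(y)|=54$, $|C_G(z)|=81$ (from Lemma~\ref{special1} and Table~\ref{Tab}), and the fact that $\theta_1=1$ is the trivial character so $d_1=1$. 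Since $\mu_i(1)=0$ for the $\lambda_i$ with no trivial constituent, several of the degrees $d_i$ are forced to coincide (in particular the constituents paired by $\lambda_7$ and $\lambda_8$ have equal degree); I would extract all such degree relations from the matrix $D$ and the shape of $B$.

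\textbf{Main steps.} The key move is to write out the structure-constant formula
$$a_{ghk}=\frac{|G|}{|C_G(g)||C_G(k)|}\sum_{i=1}^{s}\frac{\theta_i(g)\theta_i(h)\theta_i(k^{-1})}{d_i}$$
for the seven triples of Lemma~\ref{structconst}, substituting the fragment-one column values (which carry unknown signs $\eta_i=\pm1$) and the unknown degrees $d_i$. Because the elements $x,y,z$ are real (they are $3$-elements of $\Alt(9)$, hence conjugate to their inverses), $\theta_i(k^{-1})=\theta_i(k)$, so each equation is a rational expression in $g=|G|/54^2$, the $d_i$ and the signs. Taking suitable $\Z$-linear combinations of these equations — exactly as in the proof of Theorem~\ref{108}, where one eliminates the ``common'' constituent — I would isolate a single clean relation expressing $g$ in terms of one degree $d_j$ (most naturally the degree shared by the largest block of characters). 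This pins down $|G|$, hence $|G:H|$, up to the sign ambiguity and the value of that degree. Then Lemma~\ref{index} forces $|G:H|\equiv1\pmod{27}$, Lemma~\ref{mod9} gives $d_i\equiv\theta_i(z)\pmod 9$ for each row, and Lemma~\ref{mod27} gives $d_i\equiv 4\theta_i(z)-3\theta_i(x)\pmod{27}$; applied to the fragment-one entries these congruences constrain the $d_i$ so tightly (e.g. $d_2\equiv 1-12=-11\equiv16\pmod{27}$ from row~2, and similarly for the others) that no choice of signs and positive integer degrees can simultaneously satisfy them together with the value of $g$ just computed — in particular one finds the implied $|G|$ is either too small to accommodate the required sum $\sum d_i^2\le|G|$ of squares of the degrees appearing, or fails the mod~$27$ test on $|G:H|$.

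\textbf{The obstacle.} The delicate part is the bookkeeping of signs: each row of the fragment is only determined up to an overall sign $\eta_i$, and the structure-constant equations are not linear in the $\eta_i$ once divided by $d_i$, so one cannot simply ``solve'' them. The trick — and the step I expect to cost the most care — is to choose the $\Z$-linear combinations of the $a_{ghk}$ so that the $d_i$-denominators cancel for all but one character, turning the system into something where the remaining sign and degree are forced; this is the analogue of the passage ``subtracting $a_{t_1t_2t_4}-\tfrac14 a_{t_7t_7t_1}$'' in Theorem~\ref{108}, and finding the right combination for fragment one is where the real work lies. Once that relation is in hand, the contradiction with Lemmas~\ref{index}, \ref{mod9}, \ref{mod27} is a short finite check, and I would present it as: the relation yields $|G:H|$ of a specific value (or a short list), each of which violates one of the three congruence lemmas, completing the proof that fragment one cannot be the partial character table of any group $G$ satisfying the hypotheses. \qed
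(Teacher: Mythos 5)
Your proposal is a strategy outline rather than a proof, and the one step you defer --- ``finding the right combination for fragment one is where the real work lies'' --- is precisely the content of the lemma. What you describe as the expected endgame is also not what happens for this fragment. You predict that a suitable $\Z$-linear combination of the structure-constant equations will pin down $g=|G|/54^2$ in terms of a single degree, after which $|G:H|$ violates Lemma~\ref{index} or the degrees violate $\sum d_i^2\le |G|$; that is the mechanism used for fragments two and four. For fragment one the paper instead uses only the three equations coming from $a_{xyz}=3$, $a_{xxy}=2$ and $a_{yyx}=4$, groups the degrees into four auxiliary quantities, and takes combinations that \emph{eliminate} $g$ and the trivial and $d_2,d_5,d_6,d_{12}$ contributions entirely, leaving the single relation $-\tfrac{4}{d_7}=\tfrac{10}{d_{11}}$, i.e.\ $-4d_{11}=10d_7$. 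The contradiction is then purely local to these two degrees: Lemma~\ref{mod9} forces $d_{11}\equiv\theta_{11}(z)=1$ and $d_7\equiv\theta_7(z)=2\pmod 9$, so $-4d_{11}\equiv 5$ while $10d_7\equiv 2\pmod 9$. Neither Lemma~\ref{index} nor Lemma~\ref{mod27} nor any bound on $|G|$ is needed. Without exhibiting an explicit elimination and the explicit incompatibility it produces, your argument does not establish that fragment one is impossible; it only asserts that some such elimination should exist.

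Two smaller points. First, your sample application of Lemma~\ref{mod27} is miscomputed: for row $2$ of fragment one the congruence is $d_2\equiv 4\theta_2(z)-3\theta_2(x)=4-12=-8\equiv 19\pmod{27}$, not $1-12\equiv 16$. Second, the sign ambiguity you flag as the main obstacle is already absorbed by the paper's convention that each $\theta_i$ in the fragment is an irreducible character up to sign, with $d_i=\theta_i(1)$ allowed to be a negative integer; the congruences of Lemmas~\ref{mod9} and \ref{mod27} are stated for virtual characters and so apply verbatim, which is why the sign bookkeeping never actually intrudes on the final contradiction.
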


\begin{proof}
The values of the structure constants $a_{xyz}=3$, $a_{xxy}=2$ and $a_{yyx}=4$  given in Lemma~\ref{structconst} yield, respectively,  the following equalities:
\begin{eqnarray*}
\frac{6}{g}&=&1 -\frac{8}{d_2}+\frac{27}{d_5}-\frac{8}{d_6}+\frac{8}{d_7}+\frac{10}{d_{11}}+\frac{8}{d_{12}}\\
\frac{8}{g}&=&1- \frac{32}{d_2}+\frac{27}{d_5}-\frac{32}{d_6}-\frac{16}{d_7}-\frac{50}{d_{11}}+\frac{32}{d_{12}}\\
\frac{4}{g}&=&1+\frac{16}{d_2}+\frac{27}{d_5}+\frac{16}{d_6}-\frac{4}{d_7}-\frac{20}{d_{11}}-\frac{16}{d_{12}}.
\end{eqnarray*}
To simplify our notation we set $w_1= -\frac{8}{d_2}-\frac{8}{d_6}+\frac{8}{d_{12}}$, $w_2 = -\frac{4}{d_7}$, $w_3 = \frac{10}{d_{11}}$ and $w_4 = 1+\frac{27}{d_5}$.
\begin{eqnarray}
\label{a}\frac{6}{g}-w_4&=&w_1-2w_2+w_3\\
\label{b}\frac{8}{g}-w_4&=&4w_1+4w_2-5w_3\\
\label{c}\frac{4}{g}-w_4&=&-2w_1+w_2-2w_3.
\end{eqnarray}
Subtracting four times Eqn. \ref{a} from Eqn. \ref{b} and adding two times Eqn. \ref{a} to Eqn. \ref{c} we get
\begin{eqnarray*}
\label{d}-\frac{16}{g}+3w_4&=&12w_2-9w_3\\
\label{e}\frac{16}{g}-3w_4&=&-3w_2
\end{eqnarray*}
which means that $w_2 = w_3$. Thus $-4d_{11}= 10 d_7$.  Now, by Lemma~\ref{mod9},  $d_{11} \equiv 1 \pmod 9$ and $d_7 \equiv 2 \pmod 9$. This implies  $-4 d_{11} \equiv -4 \pmod 9$ and $10 d_7 \equiv 20 \pmod9= 2 \pmod 9$, which is a contradiction.
\end{proof}

\begin{lemma}\label{frag2}
Fragment  two of Table~\ref{frag} is not possible.
\end{lemma}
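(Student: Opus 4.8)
The plan is to copy the strategy used for Lemma~\ref{frag1}. Using the structure constants recorded in Lemma~\ref{structconst} --- I would use $a_{xyz}=3$, $a_{xxy}=2$ and $a_{yyx}=4$ --- I substitute the entries of fragment two of Table~\ref{frag} into the structure constant formula, recalling from Table~\ref{Tab} and Lemma~\ref{special1} that $|C_G(x)|=108$, $|C_G(y)|=54$, and that $x,y,z$ are $G$-conjugate to their inverses since every character of $H$ in Table~\ref{Tab} is real valued. The goal is to eliminate $|G|$ and the degrees that one cannot control individually, thereby reaching a single relation between two of the remaining degrees that is forbidden by Lemma~\ref{mod9}.

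Writing $d_i=\theta_i(1)$ for the degrees of the virtual characters occurring in fragment two and $g=|G|/54^2$, the three structure constants give
\begin{align*}
\tfrac{6}{g}&=1-\tfrac{8}{d_2}+\tfrac{8}{d_5}-\tfrac{8}{d_7}-\tfrac{8}{d_{10}}+\tfrac{10}{d_{11}},\\
\tfrac{8}{g}&=1-\tfrac{32}{d_2}-\tfrac{16}{d_5}+\tfrac{16}{d_7}+\tfrac{16}{d_{10}}+\tfrac{25}{d_{11}},\\
\tfrac{4}{g}&=1+\tfrac{16}{d_2}-\tfrac{4}{d_5}+\tfrac{4}{d_7}+\tfrac{4}{d_{10}}-\tfrac{5}{d_{11}}.
\end{align*}
In fragment two the rows of $\theta_5$, $\theta_7$, $\theta_{10}$ are $-(4,-2,1)$, $(4,-2,1)$, $(4,-2,1)$, so $d_5$, $d_7$, $d_{10}$ enter these equations only through the combination $R=\tfrac{8}{d_5}-\tfrac{8}{d_7}-\tfrac{8}{d_{10}}$, appearing with coefficients $1$, $-2$, $-\tfrac12$ respectively. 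Eliminating $R$ and then $g$ collapses the system to a single relation, which after clearing a common factor has the shape $d_{11}(d_2+64)=35\,d_2$.

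It remains to contradict this. By Lemma~\ref{mod9} applied to the virtual characters $\theta_2$ and $\theta_{11}$ we have $d_2\equiv\theta_2(z)=1\pmod 9$ and $d_{11}\equiv\theta_{11}(z)=-2\pmod 9$ (this is unaffected by the residual sign ambiguity in the rows, since negating a row leaves the cubic structure-constant terms and the congruences unchanged). Reducing $d_{11}(d_2+64)=35\,d_2$ modulo $9$ and substituting $d_2\equiv 1$ yields $2d_{11}\equiv 8\pmod 9$, hence $d_{11}\equiv 4\pmod 9$, which contradicts $d_{11}\equiv -2\equiv 7\pmod 9$. The only genuine work here is assembling and solving the three structure-constant equations; the point to watch is whether this particular triple produces a relation that a congruence can refute, and should it fail to, I would bring in a further structure constant from Lemma~\ref{structconst} together with Lemmas~\ref{index} and \ref{mod27} to pin the relevant degrees down modulo $27$ instead.
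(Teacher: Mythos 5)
Your proof is correct, and your three equations are right: the $a_{xxy}$ equation $\tfrac{8}{g}=1-\tfrac{32}{d_2}-\tfrac{16}{d_5}+\tfrac{16}{d_7}+\tfrac{16}{d_{10}}+\tfrac{25}{d_{11}}$ checks out against fragment two, the elimination does give $d_{11}(d_2+64)=35d_2$, and reducing modulo $9$ with $d_2\equiv 1$ and $d_{11}\equiv -2$ (both legitimate applications of Lemma~\ref{mod9}, unaffected by the sign ambiguity for exactly the reason you give) yields $4\equiv 7\pmod 9$, a contradiction. The route differs from the paper's in a small but real way. The paper uses only $a_{xyz}$ and $a_{yyx}$: the combination Eqn.~1 plus twice Eqn.~3 — which appears verbatim inside your elimination as the relation $\tfrac{14}{g}=3+\tfrac{24}{d_2}$ — already kills $d_5,d_7,d_{10}$ \emph{and} $d_{11}$ simultaneously, and the paper then stops there, trading the unknown $g$ for $|G:H|$ and invoking Lemma~\ref{index} ($|G:H|\equiv 1\pmod 9$) alongside $d_2\equiv 1\pmod 9$ to get $2\equiv -1\pmod 9$. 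You instead spend a third structure constant to eliminate $g$ outright, which buys you independence from Lemma~\ref{index}: your contradiction rests on the degree congruences alone. The paper's version is shorter; yours needs one more equation but one fewer auxiliary lemma. Both are valid, and your fallback plan (Lemmas~\ref{index} and \ref{mod27}) turns out to be unnecessary.
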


\begin{proof}We consider the structure constants  $a_{xyz}=3$  and $a_{yyx}=4$. These provide the equations
 \begin{eqnarray}
\label{dd}\frac{6}{g}&=&1 -\frac{8}{d_2}+\frac{8}{d_5}-\frac{8}{d_7}-\frac{8}{d_{10}}+\frac{10}{d_{11}}\\
\label{xx}\frac{4}{g}&=&1+\frac{16}{d_2}-\frac{4}{d_5}+\frac{4}{d_7}+\frac{4}{d_{10}}-\frac{5}{d_{11}}.
\end{eqnarray}

Adding two times Eqn.\ref{xx} to Eqn. \ref{dd} gives  the conclusion $$\frac{14}{g}=3+\frac{24}{d_2}.$$ This  means that $7\cdot 9d_{2} -3d_2|G:H|= 24|G:H|$  and   yields  $$7\cdot 3d_{2} -d_2|G:H|= 8|G:H|.$$ By Lemma \ref{mod9} we have $d_2 \equiv 1 \pmod 9$ and  by Lemma \ref{index} $|G:H| \equiv 1 \pmod 9$. Therefore
$$2 \equiv 7 \cdot 3d_2 - d_2|G:H| = 8|G:H| \equiv -1 \pmod 9,$$
which is absurd. Thus Fragment two does not lead to a group satisfying the current assumptions.
 \end{proof}

\begin{lemma}\label{frag3}
Fragment  three of Table~\ref{frag} is not possible.
\end{lemma}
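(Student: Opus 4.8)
The plan is to proceed exactly as in the proofs of Lemmas~\ref{frag1} and \ref{frag2}: extract from the structure constants in Lemma~\ref{structconst} a small system of linear equations in the reciprocals $1/d_i$ of the degrees of the virtual characters appearing in Fragment three, eliminate $1/g$ and as many of the unknown reciprocals as possible, and arrive at a linear relation among a few of the $d_i$ that can be tested modulo $9$ (or modulo $27$) against Lemmas~\ref{mod9} and \ref{mod27}. First I would write down the three or four structure-constant equations coming from $a_{xyz}=3$, $a_{xxy}=2$, $a_{yyx}=4$, $a_{zzx}=4$, $a_{zzy}=2$ (choosing whichever pair gives the cleanest elimination), reading the relevant character values off the third block of Table~\ref{frag} and using $|C_G(x)|=108$, $|C_G(y)|=54$, $|C_G(z)|=81$, together with $g=|G|/54^2$.

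Next I would take suitable integer combinations of these equations to cancel the $1/g$ term and to cancel all but two (or one) of the reciprocals $1/d_i$, just as in Lemma~\ref{frag1} where the combination forced $w_2=w_3$, hence $-4d_{11}=10d_7$. The surviving relation will be of the shape $a\,d_i = b\,d_j$ (or $c\,d_i - d_i|G:H| = e|G:H|$ as in Lemma~\ref{frag2}). Then I would feed in the congruences: by Lemma~\ref{mod9} each $d_i\equiv \theta_i(z)\pmod 9$, with $\theta_i(z)$ the entry in the middle column of Fragment three, and by Lemma~\ref{index} $|G:H|\equiv 1\pmod{27}$, hence $\equiv 1 \pmod 9$; if working modulo $27$ is needed, Lemma~\ref{mod27} supplies $d_i\equiv 4\theta_i(z)-3\theta_i(x)\pmod{27}$. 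Reducing the surviving relation modulo $9$ should produce two unequal residues, a contradiction, showing Fragment three cannot be the character-table fragment of an actual group.

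The main obstacle is bookkeeping rather than conceptual: one must be careful that the sign ambiguities in Fragment three (the rows are only determined up to an overall sign, and $\theta_i(1)=d_i$ may be negative if $\theta_i$ is minus a genuine character) do not invalidate the argument. This is handled exactly as before — the structure-constant formula is invariant under negating a character, and the congruences of Lemmas~\ref{mod9}--\ref{mod27} apply to virtual characters, so one works with the (possibly negative) integers $d_i$ throughout, and only the residues $d_i\bmod 9$ enter the final contradiction. The other point of care is to pick the combination of structure constants that actually eliminates enough reciprocals to leave a two-term relation; if the first choice of three constants does not suffice, bringing in $a_{xxz}=0$ or one of the $a_{zz\bullet}$ constants as a fourth equation will. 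I expect the computation to close in essentially the same length as Lemma~\ref{frag1}.
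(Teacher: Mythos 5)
Your overall method is the paper's method -- extract linear equations in the reciprocals $1/d_i$ from the structure constants of Lemma~\ref{structconst}, eliminate $1/g$, and derive a contradiction -- but as written your proposal is a plan rather than a proof, and the one place where you commit to specifics is where it goes wrong. For Fragment three the paper does \emph{not} end with a two-term relation $a\,d_i=b\,d_j$ tested against Lemma~\ref{mod9} or \ref{mod27}. It uses \emph{all six} structure constants other than $a_{xyz}$ (namely $a_{xxy}=2$, $a_{yyx}=4$, $a_{xxz}=0$, $a_{zzx}=4$, $a_{yyz}=6$, $a_{zzy}=2$), groups the thirteen reciprocals into seven auxiliary quantities $w_1,\dots,w_7$, and then the combination ``sum of the first four equations minus twice the sum of the last two'' collapses to $0=27w_2$, i.e.\ $2/d_3=0$, which is absurd outright. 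No congruence from Lemmas~\ref{index}, \ref{mod9} or \ref{mod27} is invoked, and no choice of three or four constants is shown to suffice; your fallback clause (``bringing in $a_{xxz}=0$ or one of the $a_{zz\bullet}$ constants as a fourth equation will'') is an unverified hope, and the actual proof suggests it would not be enough.

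The gap is therefore concrete: for a lemma whose entire content is this elimination, you have exhibited neither the six equations (whose coefficients must be read off the thirteen-row third block of Table~\ref{frag}, a nontrivial bookkeeping task in itself) nor the linear combination that produces the contradiction, and you have predicted the wrong shape for the final step. Your remarks on sign ambiguities are sound and match the paper's conventions ($d_i$ may be negative, the formulas are invariant under negating a row), but they do not substitute for the computation. To repair the proposal you would need to actually write out the six equations, find the annihilating combination, and observe that it forces a degree reciprocal to vanish.
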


 \begin{proof}  In this case we use all the  structure constants  other than  $a_{xyz}$ to reach our conclusion.
 The structure constants $a_{xxy}=2$, $a_{yyx}=4$, $a_{xxz}=0$,  $a_{zzx}=4$, $a_{yyz}=6$, and $a_{zzy}=2$ lead, respectively, to the following six equations:
\begin{eqnarray*}
\frac{8}{g} &= &1 - \frac{27}{d_2} - \frac{2}{d_3} - \frac{27}{d_5} + \frac{32}{d_6} + \frac{16}{d_7} + \frac{27}{d_{10}} + \frac{8}{d_{11}} + \frac{32}{d_{13}}\\
\frac{4}{g} &=&  1 + \frac{27}{d_2} + \frac{4}{d_3} - \frac{27}{d_5} - \frac{16}{d_6} + \frac{4}{d_7} + \frac{27}{d_{10}} + \frac{8}{d_{11}} - \frac{16}{d_{13}}\\
0 &=& 1 + \frac{4}{d_3} -\frac{27}{d_4} - \frac{27}{d_5} - \frac{16}{d_6} - \frac{32}{d_7} - \frac{27}{d_8} + \frac{27}{d_{10}} +\frac{ 8}{d_{11}} + \frac{27}{d_{12}} - \frac{16}{d_{13}}\\
\frac{9}{g} &=& 1 + \frac{16}{d_3} + \frac{27}{d_4} - \frac{27}{d_5} - \frac{4}{d_6} + \frac{16}{d_7} + \frac{27}{d_8} + \frac{27}{d_{10}} +\frac{8}{d_{11}}- \frac{27}{d_{12}} - \frac{4}{d_{13}}\\
\frac{6}{g} &=& 1 + \frac{16}{d_3} - \frac{27}{d_5} - \frac{4}{d_6} - \frac{2}{d_7} + \frac{27}{d_9} + \frac{27}{d_{10}} + \frac{8}{d_{11}} - \frac{4}{d_{13}}\\
\frac{9}{2g} &=& 1 - \frac{32}{d_3} - \frac{27}{d_5} + \frac{2}{d_6} + \frac{4}{d_7} - \frac{27}{d_9} + \frac{27}{d_{10}} + \frac{8}{d_{11}} + \frac{2}{d_{13}}.
\end{eqnarray*}

 We set
 $w_1 = \frac{27}{d_2}$, $w_2= \frac{2}{d_3}$,  $w_3=-\frac{27}{d_4}-\frac{27}{d_8}+\frac{27}{d_{12}}$, $w_4=1-\frac{27}{d_5}+\frac{27}{d_{10}}+\frac{8}{d_{11}}$,  $w_5= \frac{2}{d_6}+\frac{2}{d_{13}}$, $w_6 = \frac{2}{d_7}$ and $w_7 = \frac{27}{d_9}$.

 \begin{eqnarray}
\frac{8}{g}-w_4&=&-w_1-w_2+16w_5+8w_6\\
 \frac{4}{g}-w_4&=&w_1+2w_2-8w_5+2w_6\\
 -w_4 &=& 2w_2+w_3 -8w_5-16w_6\\
\frac{ 9}{g}-w_4&=&8w_2-w_3-2w_5+8w_6\\
 \frac{6}{g}-w_4&=&8w_2-2w_5-w_6+w_7\\
\frac{ 9}{2g}-w_4&=&-16w_2+w_5+2w_6-w_7.
\end{eqnarray}
If we add the first four equations and subtract twice the sum of the last two equations,  we obtain
$$0 = 27w_2.$$
Therefore $0 = w_2 = \frac{2}{d_3}$, which is ridiculous.
\end{proof}

\begin{lemma}\label{frag4}
Fragment  four of Table~\ref{frag} leads to the conclusion $G=H$.
\end{lemma}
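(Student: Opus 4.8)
The plan is to mimic the arguments used for Fragments one through three, but this time the system of structure-constant equations together with the congruences of Lemmas~\ref{index}, \ref{mod9} and \ref{mod27} will be consistent and will force $|G:H|=1$. First I would write down, for Fragment four, the seven structure-constant equations obtained by substituting the character values in the fourth block of Table~\ref{frag} into the structure-constant formula, using the values $a_{xyz}=3$, $a_{xxy}=2$, $a_{xxz}=0$, $a_{yyx}=4$, $a_{yyz}=6$, $a_{zzx}=4$, $a_{zzy}=2$ from Lemma~\ref{structconst}. Each equation has the shape (integer)$/g = 1 + \sum \pm c_i/d_i$ with $g=\frac{2}{9}|G:H|$ and the $d_i=\theta_i(1)$; the row of zeros and repeated rows in the fragment mean several $d_i$ coincide (in particular the rows with entries $(-4,-1,2)$, $(-3,3,0)$, $(2,2,2)$ recur), so the genuinely unknown degrees are few.

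Next I would take suitable integer linear combinations of these equations to eliminate all but one or two of the $\frac{1}{d_i}$ terms, exactly as in the proofs of Lemmas~\ref{frag1}--\ref{frag3} where clever combinations collapsed the system. The goal is to arrive at a single relation of the form (constant)$/g = $ (constant) $+ c/d_i$ for one distinguished degree, most plausibly the degree $d_2$ of the character with values $(4,1,-2)$ on $(x,z,y)$, i.e. something like $\frac{a}{g} = b + \frac{c}{d_2}$. Clearing denominators and writing $g=\frac{2}{9}|G:H|$ turns this into a divisibility statement linking $|G:H|$ and $d_2$. Then I would feed in Lemma~\ref{mod9} (giving $d_2 \bmod 9$), Lemma~\ref{mod27} (giving a finer congruence on $d_2$ in terms of its values on $x$ and $z$, here $4\cdot 1 - 3\cdot 4 = -8$, so $d_2\equiv -8\pmod{27}$) and Lemma~\ref{index} ($|G:H|\equiv 1\pmod{27}$). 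Unlike the previous three cases these congruences are now compatible, and plugging them in should force $|G:H|$ into a bounded range; combined with $|G:H|\equiv 1\pmod{27}$ and a counting bound on $|G|$ (there are at least as many irreducible characters of degree $d_2$ as there are recurring rows, so $k\,d_2^2 \le |G|$), one concludes $|G:H|=1$, i.e. $G=H$.

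The main obstacle is purely organisational: choosing the right linear combination of the seven messy equations so that everything cancels except a clean relation in $|G:H|$ and one degree. This is the same kind of hand computation that powered Lemmas~\ref{frag1}--\ref{frag3}, only here it must \emph{succeed} (produce a consistent relation) rather than produce a contradiction, and then the arithmetic of the three congruences modulo $9$ and $27$ must be pushed far enough to pin down $|G:H|$ exactly rather than merely to derive an absurdity. Once $G=H$ is forced, combining this lemma with Lemmas~\ref{frag1}--\ref{frag3} shows Fragment four is the only survivor and hence completes the proof of Theorem~\ref{G=H}; I would close the section by remarking that, since $H\cong K$ is a proper subgroup of no group satisfying the hypotheses except itself, Theorem~\ref{G=H} follows.
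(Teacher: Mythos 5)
Your high-level strategy (structure constants plus the congruences of Lemmas~\ref{index}, \ref{mod9}, \ref{mod27}) is the right one, but the concrete mechanism you predict is not what actually happens, and the step you leave to "organisation" is precisely where the real work lies. The paper uses only three of the seven structure constants ($a_{xyz}=3$, $a_{xxy}=2$, $a_{yyz}=6$), and the elimination does \emph{not} collapse to a relation in a single degree such as $d_2$: it yields $\frac{27}{d_3}+\frac{27}{d_9}=\frac{1}{g}$, where $d_3$ and $d_9$ are the degrees of the two rows of the fragment with values $(0,3,-3)$ on $(x,z,y)$. Clearing denominators gives the genuinely Diophantine relation $d_3d_9=6|G{:}H|(d_3+d_9)$ in \emph{two} unknown degrees, and no identity of the form $|G|=c\,d_2$ (the pattern from Theorem~\ref{108}, which your proposed counting bound $k\,d_2^2\le|G|$ is modelled on) ever appears. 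So the single distinguished-degree relation you hope for, and the conclusion you would draw from it, do not materialise.

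The missing ideas are the ones needed to exploit $d_3d_9=6|G{:}H|(d_3+d_9)$: first, $d_3^2,d_9^2<|G|$ gives $|G|>|d_3d_9|$ and hence $|d_3+d_9|<|H|/6=108$; second, expanding $(d_3+d_9)^2>0$ gives $d_3+d_9>-54$. You must then split on the sign of $d_3+d_9$ — remember the $d_i$ are only virtual-character degrees, determined up to sign, a point your proposal never confronts. If $d_3+d_9>0$ one gets $12|G{:}H|\le d_3+d_9\le 108$, so $|G{:}H|\le 9$, and Lemma~\ref{index} forces $G=H$; if $d_3+d_9<0$, Lemma~\ref{mod27} applied to the rows $(0,3,-3)$ gives $d_3\equiv d_9\equiv 12\pmod{27}$ (not the $-8$ you computed for the irrelevant row $(4,1,-2)$), and writing $a=d_3/3$, $b=d_9/3$ the relation becomes $ab=2|G{:}H|(a+b)$, whence $a+b\equiv 8\pmod{27}$ while $-18<a+b<0$, a contradiction. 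Without the two-degree relation, the size bounds, and the sign analysis, your outline cannot be completed as written.
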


\begin{proof} This time we focus on the structure constants $a_{xyz}=3$, $a_{xxy}=2$ and $a_{yyz}=6$. These, respectively, provide the following equations:
\begin{eqnarray*}
\frac{6}{g}& =& 1 - \frac{8}{d_2}+\frac{8}{d_5}-\frac{27}{d_6} - \frac{1}{d_7} - \frac{27}{d_{10}} + \frac{8}{d_{12}} + \frac{8}{d_{14}}\\
 \frac{8}{g} &=& 1- \frac{32}{d_2} + \frac{32}{d_5} - \frac{27}{d_6}   - \frac{1}{d_7} - \frac{27}{d_{10}} +\frac{8}{d_{12}} + \frac{32}{d_{14}}\\
\frac{6}{g} &=& 1 + \frac{4}{d_2} + \frac{27}{d_3} - \frac{4}{d_5 } - \frac{27}{d_6} - \frac{1}{d_7} + \frac{27}{d_9} - \frac{27}{d_{10}} + \frac{8}{d_{12}} - \frac{4}{d_{14}}.
\end{eqnarray*}

 Thus setting  $w_1 = 1  -\frac{27}{d_6} - \frac{1}{d_7} -\frac{27}{d_{10}} + \frac{8}{d_{12}}$, $w_2= -\frac{4}{d_2}+\frac{4}{d_5}+\frac{4}{d_{14}}$ and $w_3= \frac{27}{d_3}+\frac{27}{d_9}$, we see that
\begin{eqnarray*}
\frac{6}{g}-w_1&=&2w_2\\
\frac{8}{g}-w_1&=&8w_2\\
\frac{ 6}{g}-w_1&=&-w_2+w_3.
\end{eqnarray*}
Therefore $w_3= 1/g$ and so
\begin{eqnarray*}
\frac{27}{d_3} + \frac{27}{d_9} = \frac{9}{2}|G:H|^{-1}.
\end{eqnarray*}
Hence
\begin{eqnarray}
\label{1} d_3d_9 &=&6|G:H|(d_3+d_9).
\end{eqnarray}
 In particular $d_3 + d_9 \not= 0$.
As $d_3^2$ and $d_9^2$ are both less than $|G|$,   $|G|>|d_3d_9| $ and consequently Eqn.~\ref{1} also implies $|G| >  6|G:H||d_3+d_9|$. This means that $$|d_3+d_9| < |H|/6 = 108.$$  On the other hand, \begin{eqnarray*}0 &< &(d_3+d_9)^2 = d_3^2+d_9^2 +2d_3d_9\\& = & d_3^2+d_9^2 +12|G:H|(d_3+d_9) < |G| +12|G:H|(d_3+d_9)\end{eqnarray*} and so $$|H|+ 12(d_3+d_9)>0. $$We conclude  $d_3+d_9> -54$.   Furthermore, if $d_3+d_9$ is positive, then, by Eqn. \ref{1}, $d_3$ and $d_9$ are both positive and, as $(d_3 + d_9)^2 = d_3^2 + d_9^2 + 12|G:H|(d_3+d_9)$, we see $$12|G:H| \le d_3+d_9 \le 108$$  which gives $|G:H|\le 9$. Since Lemma~\ref{index} states $|G:H| \equiv 1 \pmod {27}$, this means that $G=H$ as desired. Therefore we may assume that $d_3+d_9< 0$.

By Lemma~\ref{mod27},  $d_3 \equiv d_9\equiv 12 \pmod {27}$. Hence  $a= d_3/3$ and $b = d_9/3$ are integers  with $a \equiv b \equiv 4 \pmod 9$. As $d_3 + d_9 > -54$,  we have  $-18<a+b<0$.   Additionally Eqn. \ref{1} becomes
\begin{eqnarray}
\label{2} ab &=&2|G:H|(a+b).
\end{eqnarray}
We now determine the possibilities for $a$ and $b$ modulo $27$. From Lemma~\ref{index} we have $|G:H| \equiv 1 \pmod{27}$ and therefore Eqn. ~\ref{2} yields $ab \equiv 2(a+b) \pmod {27}. $  Since $a \equiv b \equiv 4  \pmod 9$, we have that $a$ and $b$ are equivalent to one of $4, 13$ or $22$ modulo 27. As  $ab \equiv 2(a+b) \pmod {27} $, we infer that either $a \equiv b \equiv 4 \pmod {27}$, or up to change of notation, $a\equiv 13 \pmod {27}$ and $b \equiv 22 \pmod 9$. In particular $a+b \equiv 8 \pmod{27}$.  Since $-18 <a+b < 0$,  this is impossible.
\end{proof}

\begin{proof}[The proof of Theorem\ref{G=H}] Since fragments  1, 2 and 3 of Table~\ref{frag} are not associated to a possible character table of $G$ and since fragment 4 leads to the conclusion that $G=H$, we must have $G=H$.
\end{proof}

\section{The proof of Theorem~\ref{McL}}\label{typ}

In this section we assume the hypothesis of the Theorem~\ref{McL} which we recall reads as follows.

\begin{hyp}\label{McLHyp}   G is a finite group, $S \in \Syl_3
(G)$, $Z = Z(S)$ and $J$ is an elementary
abelian subgroup of $S$ of order $3^4$.  Furthermore
\begin{enumerate}
\item  $O^{3'}(N_G(Z)) \approx 3^{1+4}_+.2\udot\Alt(5)$;
\item  $O^{3^\prime}(N_G(J)) \approx 3^4.\Alt(6)$; and
\item $C_G(O_3(C_G(Z)))\le O_3(C_G(Z))$.
\end{enumerate}
\end{hyp}

  We use the same notation as established in
\cite[Section 5]{McL}. Thus $$Q= O_3(N_G(Z))=O_3(C_G(Z))$$ is extraspecial of order $3^5$ and exponent $3$,   $$L= N_G(Z),\;  L_*= O^{3'}(L), \;M= N_G(J) \text{ and } M_* = O^{3'}(M).$$ Notice that $$J= O_3(M)= O_3(M^*).$$With this notation Hypothesis~\ref{McLHyp}  (i) and (ii) are expressed as  $$L_*/Q \cong
2\udot \Alt(5) \cong \SL_2(5)\text { and } M_*/J \cong \Alt(6)\cong \Omega_4^-(3).$$

\begin{lemma}\label{basic} The following hold:
\begin{enumerate}
\item $C_G(Q)= Z(Q) = Z$ has order $3$;
\item $C_G(J)=J$;
\item $J= J(S)$ is the Thompson subgroup of $S$; and
\item $S=JQ$ and  $N_G(S) =L \cap M$.
\end{enumerate}
\end{lemma}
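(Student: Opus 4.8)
The plan is to prove the four statements in the order (i), then $S=JQ$, then (iii), then (ii), and finally $N_G(S)=L\cap M$. Throughout I shall use that $|S|=3^6$: indeed $|L_*|_3=|Q|\cdot|\SL_2(5)|_3=3^5\cdot 3=3^6$ and $[L:L_*]$ is prime to $3$, so $|S|=|L|_3=3^6$, and moreover $S\le L_*$ and $Q=O_3(L)\trianglelefteq S$. For (i) the first step is to identify $O_3(C_G(Z))=Q$: the image of $L_*$ in $\Aut(Z)\cong C_2$ is abelian, hence a quotient of $L_*/L_*'$, and since $L_*/Q\cong\SL_2(5)$ is perfect we have $L_*=L_*'Q$, so $L_*/L_*'$ is a $3$-group and the image is trivial; thus $L_*\le C_G(Z)$, hence $Q\le C_G(Z)$, and as $C_G(Z)\trianglelefteq L$ and $Q\trianglelefteq L$ this gives $Q\le O_3(C_G(Z))\le O_3(L)=Q$. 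Now Hypothesis~\ref{McLHyp}(iii) says exactly $C_G(Q)\le Q$, so $C_G(Q)=Z(Q)$ has order $3$ ($Q$ being extraspecial of order $3^5$); since $Z(S)\le C_S(Q)=C_G(Q)\cap S=Z(Q)$ while $S$ centralises the order-$3$ group $Z(Q)$, we conclude $Z=Z(S)=Z(Q)=C_G(Q)$, which is (i).

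Next, $J\cap Q$ is elementary abelian inside the extraspecial group $Q$, so $|J\cap Q|\le 3^3$ and $|JQ|=3^9/|J\cap Q|\ge 3^6=|S|$; hence $S=JQ$ and $J_0:=J\cap Q$ is a maximal, self-centralising, elementary abelian subgroup of $Q$, with $J_0/Z$ a Lagrangian of the symplectic space $Q/Z$. For (iii) I would note that every abelian $A\le S$ has $|A|=|A\cap Q|\cdot|AQ/Q|\le 3^3\cdot 3=3^4$, so $J$ is of maximal order among abelian subgroups and $J\le J(S)$; it then suffices to show that $J$ is the unique abelian subgroup of order $3^4$. For such an $A$ the inequality is an equality, so $B:=A\cap Q$ has order $3^3$, $AQ=S$, and, $B$ being self-centralising in $Q$, $A=C_S(B)$ (and likewise $J=C_S(J_0)$). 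If $g\in C_S(B)\setminus Q$, then $gQ$ generates $S/Q=\langle jQ\rangle$ with $j\in J\setminus Q$, so up to replacing $j$ by a power we may take $g=jq$ with $q\in Q$; as $Q$ centralises $B/Z$, so does $j$, giving $B/Z\le C_{Q/Z}(j)$. Because $j$ acts on $Q/Z$ as an order-$3$, hence unipotent, element of $L_*/Q\cong\SL_2(5)$, its fixed space there is two-dimensional, and as it contains $J_0/Z$ (since $j\in J$ centralises $J_0$) it equals $J_0/Z$; thus $B=J_0$, $A=J$, and $J=J(S)$, which is (iii).

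For (ii), I would first show that $M_*/J\cong\Alt(6)$ acts faithfully on $J$. If not, simplicity gives $[M_*,J]=1$, so $M_*\le C_G(Z)\le L$; then $M_*\cap Q\trianglelefteq M_*$ is a $3$-group, so $M_*\cap Q\le O_3(M_*)=J$, whence $|M_*\cap Q|\le 3^3$ and $|M_*Q|_3=|M_*|_3|Q|_3/|M_*\cap Q|\ge 3^{11}/3^3=3^8$, contradicting $M_*Q\le L$ with $|L|_3=3^6$. Hence $C_{M_*}(J)=J$, so $C_G(J)/J\cong C_G(J)M_*/M_*\le M/M_*$ is a $2$-group; thus $J$ is a normal Sylow $3$-subgroup of $C_G(J)$ and, by Schur--Zassenhaus, $C_G(J)=J\rtimes W$ with $W$ a $2$-group centralising $J$. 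Now $W\le C_G(J)\le C_G(Z)\le L$ normalises $Q$ and centralises the Lagrangian $J_0/Z$; writing $Q/Z=(J_0/Z)\oplus C$ as $\mathbb{F}_3[W]$-modules and using the $W$-invariant perfect pairing between $J_0/Z$ and $C$ induced by the form, $W$ centralises $C$ as well, hence all of $Q/Z$; then $[Q,W]\le Z$ and, by coprimality, $[Q,W]=[Q,W,W]\le[Z,W]=1$, so $W\le C_G(Q)=Z$ and $W=1$. Thus $C_G(J)=J$, which is (ii). Finally, $Z=Z(S)$ and $J=J(S)$ are characteristic in $S$, so $N_G(S)\le N_G(Z)\cap N_G(J)=L\cap M$, while any $g\in L\cap M$ normalises $Q$ (as $Q\trianglelefteq L$) and $J$ (as $g\in N_G(J)$), hence normalises $JQ=S$; this gives $N_G(S)=L\cap M$ and completes (iv).

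The step I expect to be the main obstacle is the uniqueness assertion inside (iii): that $S$ has exactly one abelian subgroup of order $3^4$ does not follow from $3$-group theory (a group of order $3^6$ can carry several such subgroups), and really requires the module structure of $Q/Z$ over $L_*/Q\cong\SL_2(5)$ — concretely, the fact that an order-$3$ element has a two-dimensional, rather than larger, fixed space. The coprime-action argument eliminating the $3'$-part of $C_G(J)$ in (ii) is the other point where one has to be a little careful.
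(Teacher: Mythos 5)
Your proof is correct, but for part (iii) it takes a genuinely different route from the paper's. The paper stays inside $M$: it bounds $|J\cap\hat J|$ from below using $S/J\in\syl_3(\Alt(6))$ together with part (i), and then plays a second abelian subgroup $\hat J$ of order $3^4$ off against an $N_{N_G(J)}(S)$-conjugate $\hat J^x$ to force $|J\cap\hat J\cap\hat J^x|\ge 3^2$ inside $Z(S)$, contradicting (i) again. You instead work inside $L$: you cap $|A|$ at $3^4$ via the extraspecial structure of $Q$, identify any abelian $A$ of order $3^4$ as $C_S(A\cap Q)$, and pin down $A\cap Q=J\cap Q$ using the fixed space of $j\in J\setminus Q$ on $Q/Z$. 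Your version has the bonus of exhibiting $J$ as the \emph{unique} abelian subgroup of maximal order, but the price is the step ``an order-$3$ element of $L_*/Q\cong\SL_2(5)$ has a two-dimensional fixed space on $Q/Z$'': as you yourself flag, this does not follow from unipotence alone (Jordan type $[2,1,1]$ would give a three-dimensional fixed space), and one must supply the module identification --- the central involution of $L_*/Q$ inverts $Q/Z$, so every composition factor is a faithful $\SL_2(5)$-module, hence $Q/Z$ is the $2$-dimensional $\GF(9)$-module viewed over $\GF(3)$ and the Jordan type is $[2,2]$. This is exactly the module information the paper itself invokes later (in the proof of Lemma~\ref{CLy}), so it is available, but it must be stated. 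Your treatments of (i) and (ii) are sound and considerably more detailed than the paper's (which disposes of (ii) in one sentence); the only slip is that $C_G(J)/J\hookrightarrow M/M_*$ should be called a $3'$-group rather than a $2$-group, which is all your Schur--Zassenhaus and Maschke/pairing argument needs. Part (iv) matches the paper.
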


\begin{proof}  As $C_G(Q) \le Q$ by Hypothesis~\ref{McLHyp} (iii) and $Q$ is extraspecial, part  (i) holds.  Since $S \le L$, $|S:Q|= 3$ and $Q$ is extraspecial of order $3^5$,   $J$ is a maximal abelian subgroup of $S$ and  $Z \le J$. It follows  from the described structure of $L$ and $M$ that $C_G(J) \le J$ and, as $J$ is abelian, (ii) holds.

For (iii) suppose there is $\hat{J} \le S$ with $\hat{J}$ abelian and $|\hat{J}| \ge |J|$. Then, as $S/J \in \syl_3(M_*/J) $ and $M_*/J\cong \Alt(6)$,  $|J \cap \hat{J}| \ge 3^2$.  If $S=J\hat J$, then $Z(S) \ge J\cap \hat J $ which contradicts (i). Thus $J\hat{J} \not= S$ and so $|J \cap \hat{J}| \geq 3^3$. If $\hat{J} \not= J$, then, by  Hypothesis~\ref{McLHyp} (ii),  $J\hat{J}$ is not normal in $N_{N_G(J)}(S)$. Hence there is $x \in N_G(S)$ such that $S = J\hat{J}\hat{J}^x$. Further we have $|J \cap \hat{J}| = |J \cap \hat{J}^x| = 3^3$, in particular $|J \cap \hat{J} \cap \hat{J}^x| \geq 3^2$. Since this group  is contained in $Z(S)$, we  again have a contradiction to (i). Hence $J= \hat J$ and $J=J(S)$.  This is (iii).

Since $J \not \le Q$ and $|S/Q|=3$, we have $S= JQ$. Also  $N _G(S) \le N_G(Z(S))=L $ and $N _G(S) \le N_G(J(S))=N_G(J)=M$. Hence $N_G(S) \le L \cap M$. Finally we have $L \cap M \le N_G(S)$  as $S=JQ$. Thus (iv) holds.
\end{proof}

\begin{lemma}\label{M_*mod} Let $X= M^*/J \cong \Alt(6)$.  Then, as a $\GF(3)X$-module, $J$ can be identified with the irreducible 4-dimensional section
of the natural 6-point $\GF(3)X$-permutation module for $\Alt(6)$. In particular, $J$ supports a non-degenerate
orthogonal form which is invariant under the action of $M$.
\end{lemma}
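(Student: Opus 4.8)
The plan is to pin down the $\GF(3)[M_*/J]$-module structure of $J$ by a dimension count against the irreducible $\GF(3)$-modules of $\Alt(6)$, and then to extract the orthogonal form. First I would note that $M_*/J$ acts \emph{faithfully} on $J$: the kernel of the conjugation action of $M_*$ on $J$ is $C_{M_*}(J)$, which is contained in $C_G(J)=J$ by Lemma~\ref{basic}(ii), and since $J$ is abelian this kernel equals $J$. So $J$ is a faithful $4$-dimensional $\GF(3)$-module for $M_*/J\cong\Alt(6)$. Since $\Alt(6)$ is simple and is not a $3$-group, it cannot act faithfully on a $\GF(3)$-module all of whose composition factors are trivial (a group stabilising a complete flag and acting trivially on every quotient lies in a unipotent, hence $3$-, subgroup of $\GL_4(3)$). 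Hence $J$ has a non-trivial composition factor $W$. By the $3$-modular representation theory of $\Alt(6)\cong\PSL_2(9)$, every non-trivial irreducible $\GF(3)$-module of $\Alt(6)$ has dimension $4$, $6$ or $9$ (the $4$-dimensional one being the Galois-stable module $L(1)\otimes L(1)^{(3)}$); since $\dim_{\GF(3)}W\ge 4=\dim_{\GF(3)}J$, we conclude $W=J$, so $J$ is \emph{the} $4$-dimensional irreducible $\GF(3)[\Alt(6)]$-module. This module is classically the heart of the natural $6$-point permutation module $P=\GF(3)^6$: the all-ones vector lies in the sum-zero hyperplane $P_0$ because $3\mid 6$, and $P_0/\langle\mathbf 1\rangle$ is irreducible of dimension $4$. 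This gives the asserted identification.

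For the form, I would use that under the isomorphism $\Alt(6)\cong\Omega_4^-(3)$ this module is the natural orthogonal module, which carries an $M_*$-invariant non-degenerate quadratic form of minus type; equivalently, as $J$ is absolutely irreducible and self-dual (its dual is again a $4$-dimensional irreducible, hence isomorphic to $J$), the space of $M_*$-invariant symmetric bilinear forms on $J$ is one-dimensional and is spanned by a non-degenerate form. Finally, $M$ normalises $M_*$ and therefore permutes the $M_*$-invariant forms, so $M$ stabilises this form up to a scalar in $\GF(3)^\times=\{\pm1\}$; since multiplying a form on an even-dimensional $\GF(3)$-space by $-1$ changes neither its discriminant nor its set of singular vectors, $J$ carries a non-degenerate orthogonal form whose geometry is invariant under $M$, as required.

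The one genuinely non-routine ingredient is the list of dimensions of the irreducible $\GF(3)$-modules of $\Alt(6)$ --- in effect, that there is no non-trivial such module of dimension at most $3$ and exactly one of dimension $4$. This is standard (it follows from the Brauer character table of $\Alt(6)$, or from the representation theory of $\PSL_2$ in defining characteristic), and it could equally be quoted from \cite{McL}; everything else is bookkeeping. A minor secondary issue is passing from ``$M$-invariant up to scalars'' to invariance of the orthogonal geometry, which is dealt with by the discriminant remark above.
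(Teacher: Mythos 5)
Your argument is correct, but it is worth noting that the paper does not actually prove this lemma at all: it simply cites \cite[Lemma 5.4]{McL}, where the module identification is carried out. So you have supplied a genuine self-contained proof where the paper defers to a reference. Your route --- faithfulness of $M_*/J$ on $J$ via $C_G(J)=J$, the observation that a faithful action of the simple non-$3$-group $\Alt(6)$ must have a non-trivial composition factor, and then the dimension count against the $3$-modular irreducibles of $\Alt(6)\cong\PSL_2(9)$ (dimensions $1,4,6,9$ over $\GF(3)$, the two $3$-dimensional absolutely irreducible modules fusing over the prime field) --- is the standard way to establish this, and the identification with the heart of the $6$-point permutation module and the self-duality argument for the form are both sound. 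The one place where your conclusion is slightly weaker than the literal statement is the final step: absolute irreducibility only gives you that $M$ preserves the $M_*$-invariant form up to the scalar $\pm 1$, so strictly you obtain $M$-invariance of the orthogonal \emph{geometry} (singular vectors, type) rather than of the form itself. You flag this honestly, and it is all the paper ever uses (later it speaks only of the form being ``preserved up to similarity by $M$''), so this is a presentational wrinkle rather than a gap. The standard facts you quote (the Brauer character data for $\Alt(6)$ in characteristic $3$, and $\Alt(6)\cong\Omega_4^-(3)$) are exactly the ingredients the cited reference relies on as well.
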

\begin{proof} See \cite[Lemma 5.4 and the discussion at the bottom of page 1769]{McL}.
\end{proof}

\begin{lemma}\label{Modprops} Let $X = \Alt(6)$ and $V$ be the irreducible $4$-dimensional section of the natural 6-point $\GF(3)X$-permutation module. Then
\begin{enumerate}
\item $X$ has three orbits on the $1$-dimensional subspaces of $V$. One orbit has length  $10$ and the other two orbits both have length $15$.
\item If $\langle \nu \rangle \le V$ is in an $X$-orbit of length $15$, then $C_X(\nu) \cong \Alt(4)$.
\item Every hyperplane of $V$ contains an element from the orbit of length 10.
\item If $x \in X$ is of order 4, then $C_V(x) = 0$ and $|C_V(x^2)| = 3^2$.
\end{enumerate}
\end{lemma}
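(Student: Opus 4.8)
The plan is to work explicitly with a concrete model of the module $V$ and read off all four statements by direct computation, since $X = \Alt(6)$ is small and the $4$-dimensional section of the $6$-point permutation module over $\GF(3)$ is completely transparent. Concretely, let $P = \GF(3)^6$ be the permutation module with standard basis $e_1,\dots,e_6$, let $P_0 = \{\sum a_i e_i : \sum a_i = 0\}$ be the augmentation submodule and let $D = \langle e_1 + \dots + e_6\rangle$ be the all-ones line; since $6 \equiv 0 \pmod 3$ we have $D \le P_0$, and $V := P_0/D$ is the irreducible $4$-dimensional module in question. The $X$-invariant quadratic form on $V$ of Lemma~\ref{M_*mod} is induced by $q(\sum a_i e_i) = \sum a_i^2$ on $P_0$ (which vanishes on $D$ because $6 \equiv 0$), and is of minus type by Lemma~\ref{M_*mod}; I will use this form throughout.

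For part (i), I would count the $1$-spaces of $V$: there are $(3^4-1)/2 = 40$ of them, and the orbits must partition $40$. The form gives a clean invariant: a nonzero vector $\bar\nu \in V$ is singular or non-singular, and among non-singular ones the value $q(\nu) \in \{1,2\}$ is an $X$-invariant of the line (since $q(2\nu) = 4q(\nu) = q(\nu)$). For $\Omega_4^-(3)$ the number of singular $1$-spaces is $(3^2+1)(3-1)/2 \cdot \text{(something)}$ — more simply, an $O_4^-(3)$-space has $10$ singular $1$-spaces, $15$ with $q = 1$ and $15$ with $q = 2$ (these numbers add to $40$). So it suffices to check that $X$ is transitive on each of these three sets; transitivity on the singular points can be seen because the stabilizer of a singular point in $\Omega_4^-(3) \cong \Alt(6)$ has order $|\Alt(6)|/10 = 36$, and one exhibits such a subgroup (a point stabilizer-type subgroup) inside $X$, while transitivity on each non-singular class of $15$ points follows once part (ii) is established, since a subgroup isomorphic to $\Alt(4)$ has index exactly $15$ and one checks the orbit of a single non-singular vector already has size $15$ (e.g. by orbit-counting the image of $e_1 - e_2$ under $X$). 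For part (ii), I take an explicit non-singular $\bar\nu$ — say the image of $e_1 - e_2 + e_3 - e_4$ (one checks $q = 4 = 1$) or whichever representative lands in a length-$15$ orbit — and compute $C_X(\bar\nu)$ directly inside $\Alt(6)$ acting on $\{1,\dots,6\}$; the stabilizer will visibly be a group of order $12$ acting as $\Alt(4)$ on an appropriate $4$-element subset, and since the only subgroup of order $12$ in $\Alt(6)$ up to conjugacy is $\Alt(4)$, this identifies it.

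For part (iii), I would argue dually: the hyperplanes of $V$ correspond via the form to the $1$-spaces of $V$ (the orthogonal complement map), so the $40$ hyperplanes again fall into orbits of sizes $10, 15, 15$, and a hyperplane $\bar\nu^{\perp}$ contains a vector from the length-$10$ orbit if and only if some singular line is perpendicular to $\bar\nu$. One then checks that every $1$-space is perpendicular to at least one singular $1$-space — equivalently that the "perp graph" between the $40$ points has the property that every vertex has a singular neighbour; since a non-degenerate $O_4^-(3)$-space contains many singular points ($10$ of them) and $\bar\nu^{\perp}$ is a non-degenerate $3$-dimensional (or degenerate if $\bar\nu$ singular) subspace — in the non-singular case $\bar\nu^{\perp}$ is a non-degenerate plane of type $O_3(3)$ which always contains singular vectors, and in the singular case $\bar\nu^{\perp}$ contains $\bar\nu$ itself — the claim follows. (Alternatively, and perhaps most cleanly, one observes that the images of $e_i - e_j$ are exactly the $15$ vectors of one non-singular class, the images of $e_i + e_j - e_k - e_l$ give the singular class, etc., and checks (iii) by noting that every hyperplane of $P_0/D$ is the kernel of a functional and one exhibits a singular vector in each kernel by hand.) For part (iv), an element $x$ of order $4$ in $\Alt(6)$ is of cycle type $(4,2)$, say $x = (1234)(56)$; acting on $P_0$ the fixed space of $x$ is spanned (inside $P$) by $e_1 + e_2 + e_3 + e_4$ and $e_5 + e_6$, intersected with $P_0$ and modulo $D$ — a short computation shows $C_V(x) = 0$, while $x^2 = (13)(24)$ has a larger fixed space and one computes $|C_V(x^2)| = 3^2$ directly.

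The main obstacle is not any single hard idea but bookkeeping: one must fix the model of $V$ precisely, verify that the induced form is non-degenerate of minus type and that the various small vectors ($e_i - e_j$, $e_i + e_j - e_k - e_l$, sums of three basis vectors, etc.) distribute into the orbits as claimed, and make sure the orbit-size count $10 + 15 + 15 = 40$ is pinned down rather than merely consistent. The cleanest route to rigor is to prove transitivity on singular points first (via an explicit order-$36$ stabilizer) and transitivity on one non-singular class via the explicit $\Alt(4)$ of part (ii), and then deduce everything else by complementation and counting; parts (iii) and (iv) are then routine once (i) and (ii) are in hand.
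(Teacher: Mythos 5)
The paper's own ``proof'' of this lemma is the single sentence ``These statements are the results of easy calculations,'' so your plan --- working in the explicit model $V=P_0/D$ with the invariant form $q(\sum a_ie_i)=\sum a_i^2$, counting the $10+15+15=40$ one-spaces by the value of $q$, computing the stabiliser of an explicit nonsingular vector, identifying hyperplanes with perps, and using coprime action for the fixed spaces of a $(4,2)$-element --- is precisely the computation the authors leave implicit, and it is correct in all essentials. Two small slips to repair when writing it out: $\Alt(4)$ has index $30$, not $15$, in $\Alt(6)$ (the orbit of length $15$ consists of \emph{lines}, whose stabiliser is the $\Sym(4)$ containing $C_X(\nu)\cong\Alt(4)$ with index $2$, while the orbit of the vector $\nu$ itself has length $30$); and in your parenthetical for (iii) the vectors $\overline{e_i+e_j-e_k-e_l}$ have $q=4=1$ and are therefore nonsingular --- the ten singular lines are spanned by the images of the vectors $e_i+e_j+e_k$, one for each partition of the six points into two triples.
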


\begin{proof} These statements are the results of  easy  calculations.\end{proof}

  Set $$L_0 = L_*N_{M_*}(S) \text{ and } M_0 = M_*N_{L_*}(S).$$ Since, by Lemma~\ref{basic} (iv),  $N_G(S) =M \cap L$, $L_0$ and $M_0$ are subgroups of $G$.

In the next lemma $2^-\Sym(5)$ denotes the double cover of $\Sym(5)$ which contains $2\udot \Alt(5)$ and in which the transpositions of $\Sym(5)$ lift to elements of order $4$. The Sylow $2$-subgroups of $2^-\Sym(5)$ are quaternion groups of order $16$.

\begin{lemma}\label{M0}
The following hold. \begin{enumerate} \item $M_0/J\cong  \Mat(10)$, $L_0/Q\cong  2^-\Sym(5)$ and
$N_{L_*}(S)N_{M_*}(S)$ has Sylow 2-subgroups which are isomorphic to $\Q_8$.
\item $|L : L_0| = |M :M_0| \le 2$. \item If $|L : L_0| = 2$, then $M/J\cong  2\times \Mat(10)$ and $L/Q \cong
(4\circ \SL_2(5)).2$. Furthermore, $N_G(S)$ has Sylow 2-subgroups which are isomorphic to $2\times
\Q_8$.\end{enumerate}
\end{lemma}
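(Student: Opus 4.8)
The strategy is to analyse the finite group $X := N_{L_*}(S)N_{M_*}(S)$ together with the embeddings $N_{M_*}(S) \le M_*$ and $N_{L_*}(S) \le L_*$, and then bootstrap up to $L_0$, $M_0$ and finally $L$, $M$. First I would fix $T := S/J$, a Sylow $3$-subgroup of $M_*/J \cong \Alt(6)$ of order $9$, and note $N_{M_*}(S)/J = N_{M_*/J}(T)$, which in $\Alt(6)$ is isomorphic to $3^2{:}4$ (an elementary abelian group of order $9$ extended by a fixed-point-free cyclic group of order $4$, acting as a Singer-type cycle on the two $\GF(3)$-direction factors of $T$). Dually, $N_{L_*}(S)/Q = N_{L_*/Q}(S/Q)$, and since $L_*/Q \cong \SL_2(5)$ and $S/Q$ has order $3$, this normalizer is $C_{L_*/Q}(S/Q).\langle\text{inverting element}\rangle$; a Sylow $3$-normalizer in $\SL_2(5)$ has shape $6 = Z(\SL_2(5)) \times 3$ centralized, extended by the Weyl involution, so $N_{L_*}(S)/Q$ has shape $3 \times \Q_8$ modulo the centre — more precisely $N_{L_*/Q}(S/Q)$ has a quaternion Sylow $2$-subgroup of order $8$. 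The common overgroup issue is that both $N_{L_*}(S)$ and $N_{M_*}(S)$ contain $N_G(S) = L\cap M$ (Lemma~\ref{basic}(iv)), so $X$ is a well-defined subgroup, and its Sylow $2$-subgroup is a Sylow $2$-subgroup of $N_{L_*}(S)$, which is $\Q_8$ as just computed. That gives the last clause of part (i).

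\textbf{From $X$ to $M_0$ and $L_0$.} Next I would identify $M_0/J = M_*/J \cdot (N_{L_*}(S)J/J)$. The subgroup $N_{L_*}(S)$ normalizes $S$ hence acts on $S/J \le M_*/J$; it centralizes $Z$ and hence induces on $J$ (and on $M_*/J \cong \Alt(6)$) automorphisms. The key input is Lemma~\ref{Modprops}(iv): an element of order $4$ in $\Alt(6)$ is fixed-point-free on the $4$-dimensional module $J$, matching the action of the order-$4$ element of $N_{L_*}(S)/Q$ on $J$ (which acts fixed-point-freely since $C_J$ of that element would be normalized and central-ish, contradicting $C_G(Q) = Z$ of order $3$, or directly from the $\SL_2(5)$-structure). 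The order-$4$ element therefore induces an outer automorphism of $\Alt(6)$ whose square is inner, and of the three classes of outer involutions in $\Out(\Alt(6)) = 2^2$, the one realized by an order-$4$ lift with quaternion Sylow $2$-normalizer is exactly the one giving $\Mat(10) = \Alt(6).2_3$. Hence $M_0/J \cong \Mat(10)$. Dually, the element of $N_{M_*}(S)$ of order $4$ acting on $S/Q \le L_*/Q \cong \SL_2(5)$ and inverting $S/Q$ gives an automorphism of $\SL_2(5)$ extending it to $2^-\Sym(5)$ (the non-split double cover of $\Sym(5)$ in which transpositions lift to order $4$); that the correct extension is $2^-\Sym(5)$ rather than $2\udot\Sym(5)^+$ or $\SL_2(5) \times 2$ follows because the Sylow $2$-subgroup of $X$ is $\Q_8$ (order $8$), forcing the Sylow $2$-subgroup of $L_0/Q$ to be $\Q_{16}$. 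This establishes (i) completely.

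\textbf{From $L_0, M_0$ to $L, M$.} For (ii), the observation is that $L_0 \cap M_0 = N_G(S)$ still, and $\Mat(10)$ has $\Out(\Mat(10)) = 1$ while $L_0/Q$ has a two-dimensional outer automorphism group in the relevant sense, but the constraint is symmetric: by the Frattini-type argument $L = L_* N_L(S)$ and $M = M_* N_M(S)$, and $N_L(S)/(N_L(S)\cap L_*)$ embeds in $\Out$-type data controlled by the action on $S/J$ and on $S/Q$ simultaneously. One shows $|L:L_0|$ and $|M:M_0|$ are both at most $2$ by noting that any further extension would have to induce a diagonal-type automorphism on $J$ centralizing the form of Lemma~\ref{M_*mod}, and the full such group is $\GO_4^-(3)$ whose relevant quotient over $M_*/J$ has order $2$; and $|L:L_0| = |M:M_0|$ because $N_G(S)$ normalizes both $S/J$-structure and $S/Q$-structure, so the extension on one side forces the mirror extension on the other (both are detected by $N_G(S)/N_{L_0\cap M_0}(S)$, which is the same group). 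For (iii), assuming this extension of order $2$ is present: on the $M$-side it must centralize $M_*/J \cong \Alt(6)$ (since $\Out(\Mat(10)) = 1$, the only way to extend $\Mat(10)$ by $2$ inside $\Aut$ is $2 \times \Mat(10)$), giving $M/J \cong 2 \times \Mat(10)$; on the $L$-side the extra involution together with $Z(\SL_2(5))$ and the existing $\Q_{16}$ builds $L/Q \cong (4 \circ \SL_2(5)).2$, and the Sylow $2$-subgroup becomes $2 \times \Q_8$. I expect the main obstacle to be pinning down \emph{exactly which} outer automorphism of $\Alt(6)$ is induced — ruling out $\Sym(6)$ and $\mathrm{PGL}_2(9)$ in favour of $\Mat(10)$ — which requires carefully combining the fixed-point-freeness of order-$4$ elements (Lemma~\ref{Modprops}(iv)) with the quaternion (not dihedral or semidihedral) Sylow $2$-structure forced by $L_*/Q \cong \SL_2(5)$; the rest is bookkeeping with the two Frattini decompositions and the orthogonal form on $J$.
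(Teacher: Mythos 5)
The paper offers no argument of its own here: the proof of Lemma~\ref{M0} is a citation of \cite[Lemma 5.11]{McL}, so your attempt has to be judged as a standalone proof. It contains a concrete error at its foundation. You claim that $N_{L_*/Q}(S/Q)$, the normalizer of a Sylow $3$-subgroup of $L_*/Q\cong\SL_2(5)$, ``has a quaternion Sylow $2$-subgroup of order $8$.'' It does not: $\SL_2(5)$ has ten Sylow $3$-subgroups, so this normalizer has order $12$; it is the dicyclic group $C_3\rtimes C_4$ (the centralizer $C_6=Z(\SL_2(5))\times C_3$ extended by an element of order $4$, since the only involution of $\SL_2(5)$ is central), and its Sylow $2$-subgroup is cyclic of order $4$. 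A group of order $12$ cannot contain $\Q_8$. Consequently your one-line derivation of the third clause of (i) --- ``the Sylow $2$-subgroup of $X$ is a Sylow $2$-subgroup of $N_{L_*}(S)$, which is $\Q_8$'' --- collapses, and with it the downstream identifications that lean on it: the $\Q_{16}$ Sylow $2$-subgroup of $L_0/Q$ forcing $2^-\Sym(5)$, and (implicitly) the recognition of $\Mat(10)$ via the shape $3^2{:}\Q_8$ of the Sylow $3$-normalizer, which is indeed what separates $\Mat(10)$ from $\Sym(6)$ and $\PGL_2(9)$ (whose Sylow $3$-normalizers are $3^2{:}\mathrm{D}_8$).

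The missing work is precisely the part you skipped: both $N_{L_*}(S)/S$ and $N_{M_*}(S)/S$ are cyclic of order $4$, and one must show that the two order-$4$ elements $c\in N_{L_*}(S)$ and $a\in N_{M_*}(S)$ have squares that agree modulo $S$ (the central involution of $\SL_2(5)$, inverting $Q/Z$, must be matched with the square of the order-$4$ element of the $3^2{:}4$ in $\Alt(6)$, using for instance Lemma~\ref{Modprops}(iv) and the action on $J$), and that $c$ inverts $a$ modulo $S$, so that together they generate $\Q_8$ rather than $C_4\times C_2$, $\mathrm{D}_8$ or a group of order $16$. This is the actual content of \cite[Lemma 5.11]{McL}, and nothing in your sketch supplies it. The remainder of your outline (the Frattini decompositions in (ii), the use of $\Out(\Alt(6))\cong 2^2$ and the orthogonal form to bound the index by $2$, and the case analysis in (iii)) is plausible in outline but is also stated too loosely to check --- in particular ``the extension on one side forces the mirror extension on the other'' needs the identification $N_G(S)=L\cap M$ and an explicit comparison of $N_L(S)/N_{L_0}(S)$ with $N_M(S)/N_{M_0}(S)$ --- so I would not accept the proposal without the $\Q_8$ computation being done correctly first.
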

\begin{proof} See \cite[Lemma 5.11]{McL}.\end{proof}

 \begin{lemma}\label{det1} All involutions contained in $M$ act with determinant 1 on $J$ and project to elements in $F^*(M/J)$.
\end{lemma}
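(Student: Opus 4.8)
The statement has two parts: every involution in $M$ acts on $J$ with determinant $1$, and every involution in $M$ projects into $F^*(M/J)$. I would treat the second assertion first, since it controls which cosets of $J$ the involutions can lie in, and then read off the determinant from the action on the module described in Lemma~\ref{M_*mod} and Lemma~\ref{Modprops}.

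\textbf{Step 1: locating involutions modulo $J$.} By Lemma~\ref{M0}, $M/J$ is either $\Mat(10)$ or $2\times\Mat(10)$, and in either case $F^*(M/J)$ has index $2$ (it is $\Mat(10)$, with $\Mat(10)/\Alt(6)$ of order $2$, sitting inside $M/J$ with the outer $\mathrm{PGL}_2(9)$-type and/or direct-factor involutions outside). An involution $u\in M$ either projects into $F^*(M/J)=\Mat(10)$ or into the nontrivial coset. The key point is that the nontrivial coset of $\Mat(10)$ in $\mathrm{PGL}_2(9)$-type extensions contains \emph{no involutions}: in $\Mat(10)=\Alt(6).2_3$ the outer coset consists of elements of order $8$ (and their odd powers), squaring into the $\Alt(6)$; this is standard \ATLAS information about $\Mat(10)$. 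Hence if $M/J\cong\Mat(10)$, every involution of $M/J$ already lies in $F^*(M/J)$, and so every involution of $M$ projects into $F^*(M/J)$. If $M/J\cong 2\times\Mat(10)$, write $M/J=\langle \tau\rangle\times\Mat(10)$; an involution outside $F^*(M/J)=1\times\Mat(10)$ would have the form $\tau\cdot v$ with $v$ in the outer coset of $\Mat(10)$ and $(\tau v)^2=v^2=1$, again forcing $v$ to be an involution in the outer coset of $\Mat(10)$, which does not exist. So in all cases the second assertion holds: an involution in $M$ can only project to $\tau$, to an involution of $\Mat(10)$ inside $\Alt(6)$, or to a product $\tau\cdot(\text{involution of }\Alt(6))$ — all of which lie in $F^*(M/J)$.

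\textbf{Step 2: determinant of the action on $J$.} By Lemma~\ref{M_*mod}, $J$ is the irreducible $4$-dimensional $\GF(3)\Alt(6)$-section of the $6$-point permutation module, and $M$ acts on $J$ preserving a non-degenerate orthogonal form; thus the image of $M$ in $\GL(J)\cong\GL_4(3)$ lies in the corresponding orthogonal group $\GO_4^{\pm}(3)$ (in fact $\GO_4^-(3)$, matching $M_*/J\cong\Omega_4^-(3)$ from the discussion after Hypothesis~\ref{McLHyp}). For an involution $u$ with nontrivial image $\bar u$, $\det_J(\bar u)=(-1)^{r}$ where $r=\dim_{\GF(3)}[J,\bar u]$ is the number of $-1$ eigenvalues. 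It therefore suffices to show $\dim[J,\bar u]$ is even for every involution. For $\bar u$ an involution inside $\Alt(6)$ this is immediate from the structure of the permutation module: the involutions of $\Alt(6)$ are double transpositions $(ab)(cd)$, acting on $\GF(3)^6$ with exactly two coordinate transpositions, hence with a $2$-dimensional $-1$-eigenspace on the permutation module, which survives to a $2$-dimensional (or $0$-dimensional, after quotienting out the all-ones vector and restricting to the trace-zero subspace — but the parity is unchanged) commutator on the $4$-dimensional section; in any case $\dim[J,\bar u]\in\{0,2\}$. For the direct-factor involution $\tau$ (in the $2\times\Mat(10)$ case), note $\tau$ centralizes $M_*$, so by irreducibility of $J$ under $M_*/J$ and Schur's lemma $\tau$ acts as a scalar on $J$; since $\mathrm{GF}(3)^\times=\{\pm1\}$ and $\tau$ is a nontrivial involution it acts as $-1$, with $\det_J(\tau)=(-1)^4=1$. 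The remaining involutions are products $\tau\cdot\bar v$ with $\bar v\in\Alt(6)$ of order $2$: here $\det_J(\tau\bar v)=\det_J(\tau)\det_J(\bar v)=1\cdot 1=1$. In all cases $\det_J=1$, giving the first assertion.

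\textbf{Main obstacle.} The content is really in Step 1 — ruling out involutions in the outer coset of $\Mat(10)$ — together with the bookkeeping of exactly which $M/J$ occurs. I would want to be careful that, when $|L:L_0|=2$ and $M/J\cong 2\times\Mat(10)$, the extension $M/J$ really is the \emph{direct} product (so that $\tau$ is central and Schur's lemma applies cleanly); this is exactly what Lemma~\ref{M0}(iii) provides. The only genuinely external input is the \ATLAS fact that $\Mat(10)\setminus\Alt(6)$ has no involutions; everything else is the module arithmetic of Lemma~\ref{Modprops} and the orthogonal-form observation of Lemma~\ref{M_*mod}. One should also double-check the edge case where an involution of $M$ projects trivially to $M/J$, i.e.\ lies in $J$ itself — but $J$ has exponent $3$, so there are no such involutions, and the statement is vacuous there.
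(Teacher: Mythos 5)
Your argument is correct and follows essentially the same route as the paper: the key input in both is the \ATLAS fact that $\Mat(10)\setminus\Alt(6)$ contains no involutions, after which the determinant is read off from the action on $J$ (the paper obtains determinant $1$ on the $\Alt(6)$-part more quickly from the perfectness of $\Alt(6)$, since the determinant is a homomorphism into $\{\pm1\}$, rather than counting $-1$-eigenvalues as you do, and it treats the central involution exactly as you do, as $-1$ on a $4$-dimensional space). Two small corrections that do not affect the proof: $F^*(M/J)$ is $\Alt(6)$ or $2\times\Alt(6)$, not $\Mat(10)$ --- your argument does place the involutions in the correct subgroup, but the parenthetical identification is off --- and the outer coset of $\Alt(6)$ in $\Mat(10)$ also contains a class of elements of order $4$ (squaring into $\Alt(6)$), not only elements of order $8$.
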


\begin{proof}
 By Lemma~\ref{M0} either $M/J \cong \Mat(10)$ or $M/J \cong \Mat(10)\times 2$.   As all involutions of $\Mat(10)$ are contained in $\Alt(6)$, all the involutions of $M/J$ are contained in $F^*(M/J)$.
 If $F^*(M/J) \cong \Alt(6)$, then, as $\Alt(6)$ is perfect, the result holds. If $F^*(M/J) \cong \Alt(6) \times 2$, then the central involution in $M/J$ inverts $J$ and so also has determinant 1. This completes the proof.
 \end{proof}

\begin{lemma}\label{centralizer9} Suppose that $A \le M$ has order $4$  and  $|C_J(A)| \geq 9$. Then $C_J(A)$ contains a conjugate of $Z$.
\end{lemma}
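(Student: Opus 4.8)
The plan is to work inside the orthogonal module structure on $J$ supplied by Lemma~\ref{M_*mod}. Let $A\le M$ have order $4$ with $|C_J(A)|\ge 9$, and write $\bar A$ for the image of $A$ in $M/J$. By Lemma~\ref{det1} every involution of $M$ acts with determinant $1$ on $J$ and lies in $F^*(M/J)$; in particular, if $\bar A\ne 1$ then the unique involution $\bar a\in\bar A$ lies in $F^*(M/J)\cong\Alt(6)$ or $\Alt(6)\times 2$. First I would dispose of the degenerate possibilities for $\bar A$. If $\bar A=1$, then $A\le J$ (since $A\le S$ would force $A\le C_S(J)$, but actually $A$ having order $4$ with $AJ/J=1$ means $A\le J$ is impossible as $J$ is elementary abelian; more carefully $A\cap J$ has index dividing $4$ in $A$, so some care is needed). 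The substantive case is $\bar A\ne 1$: then $\bar A$ is cyclic of order $4$ or a Klein four-group in $M/J$.

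Next I would invoke Lemma~\ref{Modprops}(iv): if $\bar A$ contains an element of order $4$ — say $\bar x$ with $\bar x^2=\bar a$ the involution — then $C_V(\bar x)=0$, so $C_J(A)$ projects trivially and hence $C_J(A)\le C_J(\langle x\rangle)\cap J$ where in fact $C_J(A)=C_J(\bar a)\cap C_J(x)$; since $C_V(\bar a)$ has order $3^2$ and $C_J(A)$ has order $\ge 9$ by hypothesis, we get $C_J(A)=C_J(\bar a)$ is exactly the $2$-dimensional fixed space of the involution $\bar a$ acting on $J$. The remaining case is $\bar A$ a four-group, where one reduces to analysing the fixed space of an involution $\bar a\in F^*(M/J)$ on the orthogonal space $J$. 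In either case the problem reduces to: \emph{the $2$-dimensional fixed space $W=C_J(\bar a)$ of an involution $\bar a$ acting with determinant $1$ on the non-degenerate orthogonal $\GF(3)$-space $J$ of dimension $4$ contains a conjugate of $Z$.}

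To finish I would identify which $1$-spaces of $J$ are $M$-conjugate to $Z$. Using Lemma~\ref{basic}(i) together with the structure of $L$ — $Z=Z(Q)$ and $Q$ is extraspecial — one checks that $\langle Z\rangle$ lies in the $M_*$-orbit of length $10$ on $1$-spaces of $J$ (this is the orbit distinguished by the orthogonal form, e.g.\ the isotropic points, or the points whose $\Alt(6)$-stabiliser is $\Alt(5)$; the precise identification comes from Lemma~\ref{Modprops}(i) combined with the fact that $N_{M_*}(Z)$ contains $N_{M_*}(S)$ which has order divisible by the right power of $2$ and $3$). Then Lemma~\ref{Modprops}(iii) says \emph{every hyperplane of $J$ contains an element from the orbit of length $10$}; since the $2$-dimensional space $W=C_J(\bar a)$ is contained in at least one hyperplane (indeed in two, as $\dim W=2$), but that is not yet enough — I need a point of the length-$10$ orbit inside $W$ itself, not merely inside a hyperplane containing $W$. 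The honest route is therefore to compute directly, for an involution $\bar a\in\Alt(6)$ acting on $V$, that its $2$-dimensional fixed space meets the length-$10$ orbit; concretely, writing $\bar a$ as a double transposition in $\Alt(6)$ acting on the $6$-point permutation module modulo the all-ones and sum-zero identifications, its fixed space in $V$ is spanned by classes of $e_i+e_j$-type vectors, and one reads off that such a fixed space always contains a vector of the isotropic/length-$10$ type. The main obstacle is precisely this last bookkeeping step — pinning down which orbit $\langle Z\rangle$ belongs to and verifying that the fixed space $C_J(\bar a)$ genuinely contains a point of that orbit for \emph{every} class of involution in $F^*(M/J)$ — and it is dispatched by the explicit description of the permutation module in Lemma~\ref{M_*mod} rather than by a slick orbit-counting argument.
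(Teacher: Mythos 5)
Your reduction to the claim that ``the $2$-dimensional fixed space $W=C_J(\bar a)$ of an involution $\bar a$ acting with determinant $1$ on the non-degenerate orthogonal space $J$ contains a conjugate of $Z$'' is where the argument breaks: that statement is false for some of the involutions your set-up allows. When $M/J\cong 2\times \Mat(10)$, the involutions of $F^*(M/J)\cong \Alt(6)\times 2$ include elements $\bar a=\bar z\bar t$ with $\bar z$ the central involution and $\bar t$ a double transposition; such an element acts with determinant $1$ and has $|C_J(\bar a)|=9$, but its fixed space is the $(-1)$-eigenspace of $\bar t$, which in the permutation-module picture is spanned by the images of $e_1-e_2$ and $e_3-e_4$ and carries the form $2(a^2+b^2)$ --- anisotropic over $\GF(3)$, so it contains \emph{no} singular point and hence no conjugate of $Z$ (which lies in the singular orbit of length $10$). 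To repair this you must first show that $A$ contains an involution mapping to a genuine double transposition of $\Alt(6)$, not merely to some determinant-one involution of $F^*(M/J)$. That is precisely what the paper extracts from Lemma~\ref{M0}: since $|M:M_0|\le 2$, the subgroup $A\cap M_0$ is non-trivial, and every involution of $M_0/J\cong\Mat(10)$ lies in $\Alt(6)$. Without this step your case analysis admits a configuration in which the asserted conclusion is false, and nothing in the proposal rules it out.

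Beyond that, the two computations you defer --- that $\langle Z\rangle$ lies in the length-$10$ (singular) orbit, and that the $(+1)$-eigenspace of a double transposition meets that orbit --- are genuinely needed and do not follow from the lemmas you cite (as you observe, Lemma~\ref{Modprops}(iii) concerns hyperplanes, not $2$-spaces). Both can be done: the stabiliser of $\langle Z\rangle$ in $\Alt(6)$ contains the Sylow $3$-subgroup $S/J$ of order $9$, and $9$ divides only the stabiliser order $36$ belonging to the orbit of length $10$; and on $\langle e_1+e_2-e_3-e_4,\;e_5-e_6\rangle$ the form is $a^2+2b^2$, which vanishes at $a=b=1$. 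So your route can be completed, but as written it is a plan rather than a proof, and its central reduction is mis-stated. For comparison, the paper's argument avoids the geometry entirely: it conjugates an involution $a\in A\cap M_*$ into $N_{M_*}(S)$, notes that $N_{M_*}(S)/S$ is cyclic of order $4$ so that $a$ is the square of an element of order $4$ normalising $S$ and therefore centralises $Z(S)=Z$, and then determinant $1$ forces $|C_J(a)|=9=|C_J(A)|$, whence $C_J(A)=C_J(a)\ni Z$.
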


\begin{proof} As any involution of $\Mat(10)$ is contained in $\Alt(6)$ and, by  Lemma~\ref{M0}, $|M:M_0|\le 2$ and $M_0/J \cong \Mat(10)$,   $A$ contains an involution $a\in M_*$. By  conjugating $A$ by elements of $M$, we may assume that $a$ normalizes $S$. Because $N_{M_*}(S)/S$ is cyclic of order $4$,  $a$ is a square in $N_{M_*}(S)$. Hence $a$ centralizes $Z(S) = Z$. As $a$ has determinant 1 in its action on $J$, we have that $|C_J(a)| = 9$ and so $C_J(a) = C_J(A)$ contains $Z$.
\end{proof}

Since, by Lemma~\ref{basic} (iii), $J$ is the Thompson subgroup of $S$ and because $J$ is abelian, \cite[37.6]{Aschbacher} implies $M$ controls $G$-fusion of elements in $J$. Therefore Lemma~\ref{Modprops} (i) implies that there are at most three and at least two conjugacy classes of subgroups of order $3$ in $J$.  We know that the $3$-central class is represented by $Z$ and that the non-trivial elements of $Z$ correspond to singular vectors in $J$. As $\Alt(6)$ is not isomorphic to a subgroup of $\Omega_4^+(3)$ (which is soluble), the quadratic form on $J$ which is preserved up to similarity by $M$ is of $-$-type. Thus there are no subgroups of $J$ of order $9$ in which all the subgroups of order $3$ are conjugate to $Z$.

\begin{lemma}\label{Mfusion}
\begin{enumerate}
\item  $M$ controls $G$-fusion   in $J$;
\item $M$  has exactly
two orbits when acting on the subgroups of order $3$ in $J$; and    \item $Z$ is weakly closed in $Q$ with respect to $G$.\end{enumerate}
\end{lemma}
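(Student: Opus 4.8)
The plan. Part (i) is immediate and is in fact already noted in the paragraph preceding the statement: since $J=J(S)$ is abelian by Lemma~\ref{basic}(iii), $M=N_G(J)$ controls $G$-fusion in $J$ by \cite[37.6]{Aschbacher}, so there is nothing further to do.

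For part (ii), Lemma~\ref{Modprops}(i) gives that $M_*/J\cong\Alt(6)$ has exactly three orbits on the $40$ subgroups of order $3$ in $J$, of lengths $10$, $15$, $15$. Since $M_*\trianglelefteq M$ the $M$-orbits are unions of these; as $Z$ lies in the orbit of the $10$ singular lines while $J$ also contains non-singular lines, $M$ has at least two orbits, so it suffices to show $M$ fuses the two length-$15$ orbits. These are distinguished by whether the (similarity class of the) $M$-invariant quadratic form on $J$ takes a square or a non-square value on a spanning vector, so an element of $M$ inducing a similarity with non-square multiplier interchanges them. If there were no such element, then --- $M/J$ acting faithfully on $J$ with trivial centre --- $M/J$ would embed in $\mathrm{PGO}_4^-(3)$; but the transpositions of $\Sym(6)$ act on the natural orthogonal $\GF(3)$-module as reflections, so $\Sym(6)\le\mathrm{PGO}_4^-(3)$, and $|\mathrm{PGO}_4^-(3)|=\tfrac12|\GO_4^-(3)|=720=|\Sym(6)|$ forces $\mathrm{PGO}_4^-(3)\cong\Sym(6)$, contradicting Lemma~\ref{M0}, which provides a subgroup $M_0/J\cong\Mat(10)$ of order $720$ in $M/J$ with $\Mat(10)\not\cong\Sym(6)$. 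Hence $M$ has exactly two orbits, and $Z^G\cap J$ is precisely the orbit of the $10$ singular lines of $J$.

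For part (iii), I would argue by contradiction, starting from a subgroup $\langle y\rangle\le Q$ that is $G$-conjugate to $Z$ with $\langle y\rangle\ne Z$. Since $Q$ is extraspecial of order $3^5$ and exponent $3$ with $Z(Q)=[Q,Q]=Z$ and $y\notin Z$, one has $|Q:C_Q(y)|=3$, $C_Q(y)$ non-abelian, and $Z(C_Q(y))=\langle Z,y\rangle=:R$ of order $9$, normalized by $Q$. Because $\langle y\rangle$ is $3$-central, $C_G(y)$ is a conjugate of $L$; thus $Q_y:=O_3(C_G(y))$ is extraspecial of order $3^5$ with $Z(Q_y)=\langle y\rangle$ and $C_G(Q_y)\le Q_y$ (a conjugate of Hypothesis~\ref{McLHyp}(iii)), and $C_G(y)/Q_y$ has Sylow $3$-subgroups of order $3$. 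Since every element of $Q\cap Q_y$ centralizes both $y$ and $Z$, one has $Q\cap Q_y=C_Q(y)\cap Q_y=C_{Q_y}(Z)\cap Q$, of order $3^3$ or $3^4$; in either case I would build a Sylow $3$-subgroup $P$ of $G$ --- namely $C_Q(y)Q_y$, which is a subgroup since $C_Q(y)$ normalizes $Q_y\trianglelefteq C_G(y)$, when $|Q\cap Q_y|=3^3$, and $\langle Q,Q_y\rangle$ when $|Q\cap Q_y|=3^4$ (after checking $Q$ and $Q_y$ centralize one another modulo their intersection). From $C_G(Q_y)\le Q_y$ one obtains $Z(P)\le Z(Q_y)=\langle y\rangle$; exploiting the symmetric roles of $Z$ and $\langle y\rangle$ (and $C_G(Q)=Z$ from Lemma~\ref{basic}(i)) one likewise forces $Z(P)\le Z$, so $Z(P)\le Z\cap\langle y\rangle=1$, contradicting $P\ne1$. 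The geometry of part (ii) enters through self-duality of the form on $J$, which gives $Q\cap J=[J,Q]=Z^\perp$; this is a degenerate hyperplane whose only singular line is $Z$ (its radical is $Z$ and $Z^\perp/Z$ is anisotropic since $J$ has $-$-type), and forcing a conjugate of $Z$ into such a hyperplane of a conjugate of $J$ closes any configuration not reached by the Sylow argument.

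I expect part (iii) to be the main obstacle: making the play-off between the two extraspecial groups $Q$ and $Q_y$ precise --- pinning down $|Q\cap Q_y|$, securing the normalities needed to build the Sylow $3$-subgroup $P$, and showing $Z(P)$ lies simultaneously in $Z$ and in $\langle y\rangle$ --- together with handling the case distinction cleanly. In part (ii) the only delicate point is the identification $\mathrm{PGO}_4^-(3)\cong\Sym(6)$, as opposed to $\Mat(10)$ or $\mathrm{PGL}_2(9)$, which is exactly what makes the contradiction bite.
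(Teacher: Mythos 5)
Parts (i) and (ii) of your proposal are correct. For (i) there is indeed nothing to prove beyond the paragraph preceding the lemma. For (ii) your route differs from the paper's: the paper simply notes that $\Mat(10)$ has no subgroup of index $15$, so the $M_0$-orbit of a non-singular point of $J$ has length $30$ and the two $\Alt(6)$-orbits of length $15$ fuse (this computation also yields $C_{M_*}(y)/J\cong\Alt(4)$, which is reused in Lemma~\ref{K/y}); you instead identify the projective isometry group of the form with $\Sym(6)$ and argue that $\Mat(10)$, having trivial centre and not being isomorphic to $\Sym(6)$, cannot consist of isometries modulo scalars and so contains a similarity with non-square multiplier, which swaps the two value classes of non-singular points. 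That argument is valid (granting, as the paper does later, that $M/J$ acts by similarities of the form of Lemma~\ref{M_*mod}), but it is considerably heavier than the paper's one-line observation.

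Part (iii) is where the genuine gap lies, and it is exactly where you predicted trouble. The paper's proof is short and geometric: if $Y\le Q\cap J$ is a $G$-conjugate of $Z$ with $Y\ne Z$, then by (i) and (ii) the conjugates of $Z$ in $J$ are precisely the singular points, so $W=YZ$ is a $2$-space of $J$ all of whose points are singular (using $Y\perp Z$, i.e.\ $Q\cap J\le Z^\perp$); but a $4$-space of minus type has Witt index $1$ and contains no totally singular $2$-space. This is precisely the content of your closing remark that $Z^\perp$ is a degenerate hyperplane whose only singular line is $Z$ --- you have relegated the actual proof to a mopping-up comment. Your main line, the play-off between $Q$ and $Q_y$, does not close. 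In the case $|Q\cap Q_y|=3^3$ your Sylow $3$-subgroup $P=C_Q(y)Q_y$ contains $Q_y$ but not $Q$, so $Z(P)\le Z(Q_y)=\langle y\rangle$ is available but the ``symmetric'' containment $Z(P)\le Z$ is not: symmetry only gives $Z(P')\le Z$ for the different Sylow subgroup $P'=C_{Q_y}(Z)Q$, and what one can actually extract, namely $Z(P)\le C_G(C_Q(y))\le C_G(\langle Z,y\rangle)$ together with $Z(P)\le\langle y\rangle$, yields $Z(P)=\langle y\rangle$ --- no contradiction, merely a restatement that $y$ is $3$-central. (The other branch is vacuous: $|Q\cap Q_y|=3^4$ would force $Q\cap Q_y=C_Q(y)$, whose derived subgroup is $Z$, to lie inside $Q_y$, whose derived subgroup is $\langle y\rangle$; and you offer no lower bound for $|Q\cap Q_y|$ in any case.) Discard the Sylow argument and promote the geometric one.
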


\begin{proof} We have already discussed (i).

As $M_0/J\cong \Mat(10)$ by Lemma~\ref{M0} and  $\Mat(10)$ has no subgroups of index $15$, we deduce that $\langle y \rangle ^{M_0}$ has size $30$ and therefore
 $C_{M_0}(y) = C_{M_*}(y)$ and $C_{M_*}(y)/J \cong \Alt(4)$ by Lemma~\ref{Modprops}(ii). In particular,  there are at most  two orbits of $M_0$ on subgroups of $J$ of order three. Since there are at least two orbits, (ii)  holds.

Suppose that   $Y \le Q\cap J$ has  order $3$ with $Y \neq Z$ and that $Y$ is $G$-conjugate to $Z$. Then $W = YZ$  is a subgroup of $J$ of order $9$ in which every proper subgroup is conjugate to $Z$.   Since $J$ has no such subgroups of order $9$, we infer that no such $Y$ exists. Thus, if $y \in (J\cap Q)\setminus Z$, then $y$ is not $3$-central in $G$. Suppose now that $Y \in Z^G\setminus\{Z\}$ and $Y \le Q$. Then $C_Q(Y) \cong 3 \times 3^{1+2}_+$ and so, as $L^g/Q^g $ has cyclic Sylow $3$-subgroups, $Z= C_Q(Y)' \le Q^g$.  Now $C_{Q^g}(Z)$ normalizes $Q$ and by conjugating in $L$, we may assume that $C_{Q^g}(Z) \le S$.  But then $C_{Q^g}(Z)$ normalizes $J$ and consequently, as $S/J$ is abelian, $Y = C_{Q^g}(Z)' \le J \cap Q $, which is a contradiction.
\end{proof}

\begin{lemma}\label{signalizer1}  The only  $3'$-subgroup of $G$ which is normalized by $J$ is the trivial subgroup.
\end{lemma}

\begin{proof} Assume that $J$ normalizes   a non-trivial $3'$-subgroup $X$ of $G$. Then,
as every subgroup  of $J$ of order $27$ contains a conjugate of $Z$ by Lemma~\ref{Modprops} (iii),  as $J$ acts
coprimely on $X$, and, as $X=\langle C_X(J_1)\mid |J:J_1|=3\rangle$, we may assume that $Y= C_{X}(Z)\neq 1$.  But then $Y$ is a non-trivial $3'$-subgroup of $L$. As
$Y$ is normalized by $A = J \cap Q$ and $Y$ normalizes $Q$, $[A,Y] \le Q \cap Y=1$ and hence, as $Y \neq 1$ and  $A$ is a maximal abelian subgroup of $Q$, $Y
\le C_L(A) = J $. But then $Y=1$, which is a contradiction.
\end{proof}

We now select and fix $y \in (Q\cap J)\setminus Z$. By Lemma~\ref{Mfusion} (iii), $y$ is not a $3$-central element of $G$. Define  $$K= C_M(y)\text { and } H= C_G(y).$$ The aim of the  remainder of this section  is to prove that
 $$C_G(O_3(H)) \leq O_3(H).$$

\begin{lemma}\label{CLy} We have $C_L(y) \le K$.\end{lemma}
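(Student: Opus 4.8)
The statement to prove is $C_L(y)\le K=C_M(y)$, where $y\in(Q\cap J)\setminus Z$, $L=N_G(Z)$ and $M=N_G(J)$. The natural strategy is to show that $C_L(y)$ normalizes $J$, for then $C_L(y)\le C_{N_G(J)}(y)=C_M(y)=K$. So the whole problem reduces to: starting from an element $y$ which lies in $Q\cap J$ but is not $3$-central, produce $J$ canonically from the data available inside $L=N_G(Z)$ together with $y$.

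\textbf{Key steps.} First I would locate $C_L(y)$ inside the known structure of $L$. We have $Q=O_3(L)$ extraspecial of order $3^5$ and exponent $3$ with $Z(Q)=Z$, and $L_*/Q\cong\SL_2(5)$ (or $L/Q\cong(4\circ\SL_2(5)).2$ in the larger case). Since $y\in Q$ and $y\notin Z$, its image $\bar y$ in $Q/Z$ is a nonzero vector of the natural $4$-dimensional symplectic $\GF(3)$-module for $L/Q$; as $C_L(\bar y)$ stabilizes the $1$-space $\langle\bar y\rangle$, I would identify $C_L(y)Q/Q$ with the stabilizer of this $1$-space in $L/Q$, a subgroup of $2\udot\Alt(5)$ acting on the orthogonal complement $\langle\bar y\rangle^\perp/\langle\bar y\rangle$ (a symplectic $2$-space). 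Next, set $A=J\cap Q$; this is a maximal abelian subgroup of $Q$ of order $3^3$ containing both $Z$ and $y$, and $J=A\langle j\rangle$ for some $j\in J\setminus Q$ with $S=JQ$ (Lemma~\ref{basic}(iv)). The plan is to show $C_L(y)$ normalizes $A$: indeed $A/Z$ is a maximal totally isotropic subspace of $Q/Z$ containing $\bar y$, and I would argue that it is \emph{the} subspace singled out by $y$ inside $C_L(y)$ — for instance $A/Z$ should be recoverable as a Sylow-related object, or as $[O_3(C_L(y)),\,\text{something}]$, using that $C_{Q}(y)=A$ has index $3$ in $Q$ and is the unique maximal abelian subgroup of $Q$ normalized by a Sylow $3$-subgroup of $C_L(y)$. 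Then, having $C_L(y)$ normalize $A$, I would recover $J$ itself: $J$ is the Thompson subgroup $J(S)$ (Lemma~\ref{basic}(iii)), and more to the point $J=C_S(A)=C_G(A)\cap S$ is the unique elementary abelian subgroup of order $3^4$ of $S$ containing $A$; and by Lemma~\ref{signalizer1}-type considerations $J$ is pinned down inside $C_G(A)$. Since $C_L(y)$ normalizes both $A$ and the structure that produces $J$ from $A$, it normalizes $J$, hence $C_L(y)\le N_G(J)$, and combined with $y\in Z(C_L(y))$ we get $C_L(y)\le C_M(y)=K$.

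\textbf{Alternative cleaner route.} It may be faster to work with a Sylow $3$-subgroup. Pick $P\in\Syl_3(C_L(y))$; since $y\in Z$ is false but $y\in Q$, a counting argument on $|C_L(y)|$ via the module structure of $Q/Z$ shows $|P|=3^4$ and indeed $Q\cap J\le C_Q(y)=A$, with $P\ge A$. Because $S=JQ\in\Syl_3(G)$ and $C_S(y)$ has order $3^4$ (as $y$ is noncentral in $Q$ but $C_J(y)=J$), we get $P$ conjugate in $C_L(y)$ to $C_S(y)$; and $C_S(y)\ge J$ forces $C_S(y)=J$ since $J$ is a maximal abelian (hence maximal among abelian, and here it is exactly self-centralizing by Lemma~\ref{basic}(ii)) subgroup and $|C_S(y)|=3^4=|J|$. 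Thus a Sylow $3$-subgroup of $C_L(y)$ is $J$. Now $N_{C_L(y)}(J)$ controls, and by a Frattini-type argument $C_L(y)=C_{C_L(y)}(y)\cdot N_{C_L(y)}(J)$; the point is then to show $C_L(y)$ normalizes $J$ outright, which follows if $J\trianglelefteq C_L(y)$, e.g. because $J$ is characteristic in $C_S(y)=J$ and is the unique Sylow $3$-subgroup — i.e. $C_L(y)$ is $3$-closed. Establishing that $O_3(C_L(y))=J$ (equivalently $C_L(y)/J$ is a $3'$-group) is the crux: one shows the preimage in $L$ of $C_{L/Q}(\bar y)$ has its $3'$-part acting on the $2$-dimensional quotient $\langle\bar y\rangle^\perp/\langle\bar y\rangle$, while $A=C_Q(y)\le J$, and any $3$-element of $C_L(y)$ normalizing $J$ must centralize $y$ and lie in $S$, hence in $C_S(y)=J$. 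Then $J=O_3(C_L(y))\trianglelefteq C_L(y)$, giving $C_L(y)\le N_G(J)$ and finally $C_L(y)\le C_M(y)=K$.

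\textbf{Main obstacle.} The delicate point is proving that no $3$-element of $C_L(y)$ lies outside $J$ — i.e. that $C_S(y)=J$ and that this Sylow $3$-subgroup is normal in $C_L(y)$. This needs the precise action of $L/Q\cong\SL_2(5)$ (or its extension) on $Q/Z$: one must check that the stabilizer of the isotropic $1$-space $\langle\bar y\rangle$ has a normal Sylow $3$-subgroup in its action, equivalently that $y$ being non-$3$-central (Lemma~\ref{Mfusion}) forces $\langle\bar y\rangle$ to be a \emph{non-isotropic}, or otherwise distinguished, line whose stabilizer in $2\udot\Alt(5)$ is $3$-closed. I expect this to reduce to a short explicit computation in $\Sp_4(3)$ or its relevant subgroup, using that $y$ lies in the orbit of $M$ on $1$-spaces of $J$ that is \emph{not} the $3$-central one, together with Lemma~\ref{Modprops}. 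Everything else is routine bookkeeping with the structure of $L$, $Q$, $J$ and Lemma~\ref{basic}.
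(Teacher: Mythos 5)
Your overall strategy -- show that $C_L(y)$ is $3$-closed and that $J$ is characteristic in its Sylow $3$-subgroup, so that $C_L(y)\le N_G(J)=M$ and hence $C_L(y)\le C_M(y)=K$ -- is exactly the paper's route. But your ``cleaner'' version rests on a false computation: $C_S(y)$ is \emph{not} $J$ and does not have order $3^4$. Since $S=QJ$ and $C_Q(y)$ has index $3$ in $Q$ (as $Q$ is extraspecial and $y\notin Z$), we get $C_S(y)=C_Q(y)J$ of order $3^5$, with $C_Q(y)\cap J=Q\cap J$ of order $3^3$; in particular $C_Q(y)\not\le J$. (The paper itself uses $C_S(y)=JC_Q(y)$ of order $3^5$ repeatedly, e.g.\ $C_S(y)/\langle y\rangle\cong 3\wr 3$ in Lemma~\ref{F}.) Consequently $J$ is not a Sylow $3$-subgroup of $C_L(y)$, $J\ne O_3(C_L(y))$, and the step ``any $3$-element of $C_L(y)$ lies in $C_S(y)=J$'' fails. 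Your first route is closer to repairable but leaves its key step (``$A/Z$ is \emph{the} Lagrangian singled out by $y$'') as an unproved assertion; a $1$-space of a $4$-dimensional symplectic space lies in several maximal totally isotropic subspaces, so $A$ is not canonically determined by $\bar y$ alone.

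The correct argument, which keeps your skeleton, is: since the elements of order $5$ in $L_*$ act fixed-point-freely on $Q/Z$ and the (central) involution of $L_*/Q\cong\SL_2(5)$ inverts $Q/Z$, the image of $C_{L_*}(y)$ in $L_*/Q$ contains no element of even order and no element of order $5$, hence is a $3$-group; therefore $C_{L_*}(y)=C_Q(y)J$ and $C_L(y)$ is $3$-closed with normal Sylow $3$-subgroup $C_Q(y)J$ of order $3^5$. Since $J=J(S)$ is the Thompson subgroup and $J\le C_Q(y)J\le S$, one gets $J=J(C_Q(y)J)$ is characteristic there, so $C_L(y)\le N_G(C_Q(y)J)\le N_G(J)=M$, whence $C_L(y)\le K$. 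Note also that the ``main obstacle'' you flag is not one: the $3$-closedness of the stabilizer holds for \emph{every} nonzero vector of $Q/Z$ (all vectors are isotropic for the symplectic form anyway), precisely because of the fixed-point-free action of the $5$-elements and the inverting involution; no case distinction between $3$-central and non-$3$-central lines is needed at this point.
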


\begin{proof}  We consider the subgroup $U= \langle y,Z\rangle$ and calculate $C_{L_*}(U)= C_{L_*}(y)$. As $U \le J$ and $J$ is abelian, $J\le C_{L_*}(U)$  and, as $Q$ is extraspecial, $C_Q(U)$  has index $3$ in $Q$ and $ C_Q(U)J$ is a Sylow $3$-subgroup of $C_L(U)$. Since the elements of order $5$ in $L_*$ act fixed-point-freely on $Q/Z$ and since the involutions in $ L_* $ invert $Q/Z$, we infer that $C_{L_*}(U)= C_Q(U)J$. Hence $C_{L_*}(y)$ and $C_L(y)$ are $3$-closed. It follows that $C_L(y) \le N_G(C_Q(U)J)\le N_G(J)=M$ as $J= J(S)$ by Lemma~\ref{basic} (iii).
\end{proof}

Define  $$K_a= \langle (1,2,3), (1,4,7)(2,5,8)(3,6,9), (1,2)(4,5)  \rangle$$ and \begin{eqnarray*}K_b&=&
N_{\Alt(9)}(\langle (1,2,3), (4,5,6), (7,8,9)\rangle)\\&=&  \langle (1,2,3),   (1,4,7)(2,5,8)(3,6,9), (1,2)(4,5),
(1,4)(2,5)(3,6)(7,8)\rangle.\end{eqnarray*}

We have that $K_a$ is isomorphic to a semidirect product of an elementary abelian group of order 27 by $\Alt(4)$ and $K_b$ is isomorphic to a semidirect product of an elementary abelian group of order 27 by $\Sym(4)$. Moreover $|K_b:K_a|=2$.  Note that any elementary abelian subgroup of $\Sym(9)$ of order $27$ is conjugate in $\Sym(9)$ to $\langle (1,2,3), (4,5,6), (7,8,9)\rangle$.

\begin{lemma}\label{K/y}  One of the following holds:
\begin{enumerate}
\item $M/J \cong \Mat(10)$ and $K/\langle y\rangle \cong K_a$; or
\item $M/J \cong \Mat(10)\times 2$ and $K/\langle y \rangle \cong K_b$.
\end{enumerate}
Moreover, $C_{K}(O_3(K)) \le O_3(K)$.
\end{lemma}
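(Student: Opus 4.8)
The statement has two parts: identifying the isomorphism type of $K/\langle y\rangle$ depending on the structure of $M/J$, and then showing $C_K(O_3(K))\le O_3(K)$. For the first part I would start from Lemma~\ref{Mfusion}(ii): $M$ has exactly two orbits on subgroups of order $3$ in $J$, the $3$-central one represented by $Z$ and the other represented by $y$. By Lemma~\ref{Modprops}(ii) (applied via $M_0/J\cong \Mat(10)$, as in the proof of Lemma~\ref{Mfusion}), $C_{M_*}(y)/J\cong\Alt(4)$, and $C_{M_*}(y)$ has $J$ as a normal elementary abelian subgroup of order $3^4$ with $C_{M_*}(y)/J$ acting on $J$. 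Passing to $C_M(y)=K$: when $M/J\cong\Mat(10)$ we have $M=M_0\cdot(\text{something of order dividing }2)$, but since $\Mat(10)\setminus\Alt(6)$ has no involutions centralizing a non-$3$-central point-stabilizer appropriately, $K/J\cong\Alt(4)$ and $K/\langle y\rangle$ is an extension of $J/\langle y\rangle\cong 3^3$ by $\Alt(4)$; when $M/J\cong\Mat(10)\times 2$ the extra central involution of $M/J$ lies in $K/J$ (it inverts $J$, hence centralizes $y$ up to... no — it inverts $y$; here one must be careful, but $y$ and $y^{-1}$ generate the same subgroup and the involution can be adjusted by an element of $J$ so that a representative of order $2$ in $M\setminus M_*\cdot J$ genuinely centralizes $y$), giving $K/J\cong\Sym(4)$ and $K/\langle y\rangle$ an extension of $3^3$ by $\Sym(4)$.

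**Pinning down the extension.** To show the extension is split and matches $K_a$ (resp. $K_b$) rather than some other group of the same shape, I would use the explicit action of $\Alt(4)$ (resp. $\Sym(4)$) on $J/\langle y\rangle\cong 3^3$ coming from Lemma~\ref{M_*mod} and Lemma~\ref{Modprops}(iv): the module $J$ is the $4$-dimensional $-$-type orthogonal $\GF(3)\Alt(6)$-module, and restricting the orthogonal form to $y^\perp$ and quotienting by $\langle y\rangle$ gives a $3$-dimensional module on which $\Alt(4)$ acts; one checks this is the (unique up to the obvious choices) faithful $3$-dimensional $\GF(3)\Alt(4)$-module, matching the action of $K_a$ on its $O_3$. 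The group $K_a$ is characterised among groups of shape $3^3{:}\Alt(4)$ by the fact that it has characteristic $3$ — equivalently $C_{K_a}(O_3(K_a))\le O_3(K_a)$ — and that $\Alt(4)$ acts faithfully. For the splitting, one can cite that $H^2(\Alt(4), 3^3)=0$ for this module (coprime-to-$3$ part handled automatically; the $3$-part by a direct computation or by noting $\Alt(4)$ has a complement in $C_{M_*}(y)$ already inside $M_*$). The $K_b$ case then follows since $|K_b:K_a|=2$ and the outer involution is supplied by the central involution of $M/J\cong\Mat(10)\times 2$.

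**The final clause.** For $C_K(O_3(K))\le O_3(K)$: in case (i), $O_3(K)=J$ and $C_K(J)=C_M(y)\cap C_G(J)=C_G(J)\cap C_M(y)\le C_G(J)=J$ by Lemma~\ref{basic}(ii), so $C_K(O_3(K))=J=O_3(K)$. In case (ii) the same argument works verbatim, since $O_3(K)=J$ again (the $\Sym(4)$ on top has trivial $O_3$, and $J\trianglelefteq K$) and $C_K(J)\le C_G(J)=J$. Alternatively one can phrase it uniformly: $O_3(K)\ge J$ always, $O_3(K)\le S\cap C_G(y)$ has $J$ as its unique abelian subgroup of index $\le 3$, and $C_K(O_3(K))\le C_K(J)\le C_G(J)=J\le O_3(K)$.

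**Main obstacle.** The genuinely delicate point is the case $M/J\cong\Mat(10)\times 2$: verifying that the extra central involution of $M/J$ really does lift to an element of $M$ that centralizes $y$ (as opposed to merely inverting it), and hence lies in $K/J$, so that $K/J\cong\Sym(4)$ rather than $\Alt(4)$. This requires choosing the involution's coset representative modulo $J$ correctly and using that $J$ is abelian with $y\in J$, so that replacing the representative $t$ by $tj$ for suitable $j\in J$ (using that $J=C_J(t)\oplus [J,t]$ by coprime action and $y=y_0+y_1$ with $y_0\in C_J(t)$, $y_1\in[J,t]$) produces a representative fixing $y$ exactly when $y$ has a non-trivial fixed component — which holds because the $-1$-eigenspace of the central involution on $J$ is all of $J$, wait: the central involution inverts $J$, so $C_J(t)=1$ and no representative centralizes $y$. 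Resolving this — presumably the relevant involution in case (ii) is not the central one of $\Mat(10)\times 2$ but an involution of the form (central involution)$\times$(transposition of $\Alt(6)$), which does centralize the appropriate point — is where the real care is needed, and I would handle it by working explicitly with the permutation realisation: $M_*/J\cong\Alt(6)$ acting on $6$ points, $C_{M_*/J}(y)\cong\Alt(4)$ being a point-pair stabiliser, and checking which involutions of $\Mat(10)\times 2$ normalising this $\Alt(4)$ actually centralise $y$.
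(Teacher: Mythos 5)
Your route is genuinely different from the paper's: the paper identifies $C_{M_*}(y)/\langle y\rangle$ with $K_a$ by letting it act on the nine cosets of $[J,a]A\langle y\rangle$ (for $A\in\Syl_2(C_{M_*}(y))$), thereby embedding it into $\Sym(9)$, and then using that the image is generated by elements of order $3$ and normalizes a $3^3$; you instead identify the $\GF(3)$-module $J/\langle y\rangle\cong y^\perp$ and appeal to uniqueness of the faithful $3$-dimensional $\GF(3)\Alt(4)$-module together with vanishing of $H^2$ (which does hold, since the restriction of that module to a Sylow $3$-subgroup of $\Alt(4)$ is free). For case (i) this works, and your argument for the final clause ($O_3(K)=J$ and $C_K(J)\le C_G(J)=J$ by Lemma~\ref{basic}(ii)) is correct and in fact more direct than the paper's appeal to solubility and $O_{3'}(K)=1$.

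Case (ii), however, contains a genuine gap. First, the construction of the $\Sym(4)$ on top is left as a ``presumably'': you correctly observe that the central involution $b$ of $M/J$ inverts $J$, so $C_J(b)=1$ and no $J$-translate of $b$ centralizes $y$; the correct fix --- which is what the paper does --- is to take the diagonal element $bs$ where $s\in N_{M_*}(C_{M_*}(y))$ maps to an odd element of $N_{M_*/J}(C_{M_*}(y)/J)\cong\Sym(4)$ and hence also inverts $y$. Your final guess points in this direction but is not carried out. Second, and more seriously, ``the $K_b$ case then follows since $|K_b:K_a|=2$'' is not a valid inference. There are two non-isomorphic groups of shape $3^3{:}\Sym(4)$ containing a copy of $K_a$ with index $2$, corresponding to the two extensions of the faithful $3$-dimensional $\Alt(4)$-module to $\Sym(4)$ (they differ by a tensor with the sign character; in one a transposition fixes a $3^2$ of $O_3$, in the other only a $3^1$). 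The paper warns of exactly this before the proof of Theorem~\ref{G=H} (``$K$ has shape $3^3{:}\Sym(4)$ but is not the unique group of this shape which has characteristic $3$''), and Theorem~\ref{G=H} is proved for $K_b$ specifically, so the distinction cannot be waved away. To close the gap you must compute the action of the diagonal involution $bs$ on $J/\langle y\rangle$ --- it acts as $-1$ times the action of $s$ on $y^\perp$, hence with a one-dimensional fixed space --- and check that this matches the action of the outer involution $(1,4)(2,5)(3,6)(7,8)$ of $K_b$ on $O_3(K_b)$, which it does.
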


\begin{proof} We saw in the proof of Lemma~\ref{Mfusion} that  $C_{M_0}(y) = C_{M_*}(y)$ and $C_{M_*}(y)/J \cong \Alt(4)$.
Let
 $ A \in \Syl_2(C_{M_*}(y))$ and
 $a \in A^\#$. Then the action of $C_{M_*}(y)$ on the cosets of $[J,a]A\langle y \rangle$ gives an embedding of
 $C_{M_*}(y)/\langle y\rangle$ into $\Sym(9)$. Since $C_{M_*}(y)/\langle y \rangle$ is generated by elements of
 order $3$ and normalizes $J/\langle y \rangle$, we have $C_{M_*}(y)/\langle y \rangle\cong K_a$. Thus,   if
 $M=M_0$, then (i) holds.

 Suppose that $M>M_0$. Then $M/J \cong \Mat(10)\times 2$. Let $b \in M$ be an involution such that $b J\in
 Z(M/J)$. Then $b$ inverts $y$ and also  $y$ is inverted in  $N_{M_*}(C_{M_*}(y))\approx 3^4{:}\Sym(4)$. Thus the diagonal subgroup of $\langle
 b\rangle N_{M_*}(C_{M_*}(y))/J \cong 2 \times \Sym(4)$ which is isomorphic to $\Sym(4)$ centralizes  $y$. Now
 every element of ${M_*}\langle b\rangle$ acts on $J$ with determinant $1$. It follows that every element of $C_{{M_*}\langle
 b\rangle}(y)$ acts on $J/\langle y\rangle$ with determinant $1$. Let $B \in \syl_2(C_{{M_*}\langle
 b\rangle}(y))$ and $t\in Z(B).$ Then $C_{{M_*}\langle
 b\rangle}(y)$  acts on the nine cosets of $[J,t]B\langle y\rangle$  in $ C_{{M_*}\langle
 b\rangle}(y)$. Thus again we have $C_{{M_*}\langle
 b\rangle}(y)/\langle y\rangle $ is isomorphic to a subgroup of $\Sym(9)$ which normalizes a subgroup of order
 $3^3$. Since every element of $C_{{M_*}\langle
 b\rangle}(y)$ acts on $J/\langle y\rangle$ with determinant $1$, we deduce that they have commutator of order  $3^2$ in $J/\langle y \rangle$;  in particular  $C_{{M_*}\langle
 b\rangle}(y)$  does not contain elements conjugate in $\Sym(9)$ to $(1,2)$ or $(1,2)(3,4)(5,6)$. Hence $C_{{M_*}\langle
 b\rangle}(y)/\langle y \rangle$  is isomorphic to a subgroup of $\Alt(9)$. Therefore $C_{M}(y)/\langle y \rangle \cong K_b$.

 Finally, as $O_{3'}(K) =1$ by Lemma~\ref{signalizer1} and $K$ is soluble, $C_K(O_3(K)) \le O_3(K)$.
\end{proof}


%
%

%

 The proof of Lemma~\ref{centralizerH} uses the following theorem of Smith and Tyrer.

\begin{theorem}[Smith-Tyrer \cite{SmTy}]\label{SmT}  Let $D$ be a finite group and let $P$ be a Sylow $p$-subgroup of $D$ for some odd prime $p$. Suppose $P$ is abelian but not cyclic. If $|N_D(P) : C_D(P)| = 2$, then $O^p(D) < D$ or $D$ is $p$-soluble of $p$-length $1$.\qed
\end{theorem}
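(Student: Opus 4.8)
The plan is to dichotomise according to the action of $N_{D}(P)$ on $P$, deriving the first alternative by a transfer argument whenever that is possible and otherwise proving the second. Since $P$ is abelian, Burnside's theorem shows that $N_{D}(P)$ controls $D$-fusion in $P$; hence the focal subgroup theorem gives $P\cap D'=[P,N_{D}(P)]$, and since $P$ is abelian the quotient $D/O^{p}(D)$ is itself an abelian $p$-group and therefore equals the $p$-primary part of $D/D'$, so that $D/O^{p}(D)\cong P/[P,N_{D}(P)]$. Write $N_{D}(P)=C_{D}(P)\langle t\rangle$ with $tC_{D}(P)$ of order $2$. As $C_{D}(P)$ acts trivially on $P$ we get $[P,N_{D}(P)]=[P,t]$, and $\langle tC_{D}(P)\rangle\cong C_{2}$ acts coprimely on the abelian $p$-group $P$ because $p$ is odd; hence $P=C_{P}(t)\oplus[P,t]$ with $t$ inverting $[P,t]$, and so $D/O^{p}(D)\cong P/[P,t]\cong C_{P}(t)$.

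If $C_{P}(t)\neq 1$ we are done: then $D/O^{p}(D)\cong C_{P}(t)\neq 1$, so $O^{p}(D)<D$. So from now on assume $C_{P}(t)=1$, i.e.\ $t$ inverts $P$; equivalently $P=[P,N_{D}(P)]$ and $O^{p}(D)=D$. It remains to show that in this case $D$ is $p$-soluble of $p$-length $1$, and this is the substantive part of the theorem.

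Two reductions clear the ground. First, since $t$ inverts $P$ one computes $t^{-1}t^{x}=x^{2}$ for every $x\in P$, so $x^{2}\in\langle t^{D}\rangle$; as $p$ is odd every element of $P$ is a square, whence $P\le\langle t^{D}\rangle=:D_{0}\trianglelefteq D$. Because $P$ is a Sylow $p$-subgroup lying inside $D_{0}$, the quotient $D/D_{0}$ is a $p'$-group, the hypotheses descend to $D_{0}$, and $D$ is $p$-soluble of $p$-length $\le 1$ if and only if $D_{0}$ is; so we may assume $D=\langle t^{D}\rangle$. Second, a routine argument on $D/O_{p'}(D)$ (using coprimality to identify the image of $P$ with $P$, and a Frattini argument to control the image of $N_{D}(P)$) lets us assume $O_{p'}(D)=1$. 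With $O_{p'}(D)=1$, proving $p$-solubility of $p$-length $\le 1$ is the same as proving $P\trianglelefteq D$: indeed, if $P\trianglelefteq D$ then $D/P$ is a $p'$-group since $P$ is Sylow, whence $D$ is $p$-soluble of $p$-length $\le 1$.

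So everything comes down to the assertion: \emph{if $O_{p'}(D)=1$, $P$ is an abelian non-cyclic Sylow $p$-subgroup inverted by the $2$-element $t$, and $D=\langle t^{D}\rangle$, then $P\trianglelefteq D$.} This is the step I expect to be the main obstacle. I would argue by a minimal counterexample and induction on $|D|$, exploiting the local restrictions forced by the inversion: every nontrivial subgroup of $P$ is inverted, so $t$ centralises no nontrivial subgroup of $P$; for $g\in D$ the product $t\,t^{g}$ centralises $P\cap P^{g}$, so in a minimal counterexample distinct Sylow $p$-subgroups meet trivially and $\langle t,t^{g}\rangle$ has a transparent (near-dihedral) structure; and the $p$-local subgroups $N_{D}(U)$, $1\neq U\le P$, are correspondingly small. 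Feeding these facts into normal $p$-complement criteria (Frobenius, Thompson) for the $p$-locals, together with transfer and the analysis of the subgroups $\langle t,t^{g}\rangle$, one is driven to $N_{D}(P)=D$. (In a classification-based treatment one would instead reduce, using $O_{p'}(D)=1$ and $D=\langle t^{D}\rangle$, to the case that $F^{*}(D)$ is quasisimple and invoke the classification of simple groups with an abelian Sylow $p$-subgroup, noting that the constraint $|N_{D}(P):C_{D}(P)|=2$ is far too tight to be realised there.) Either way, the transfer dichotomy and the first case above are routine; all the difficulty is concentrated in this last solubility assertion.
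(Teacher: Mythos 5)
This statement is quoted in the paper from Smith and Tyrer's articles and is not proved there, so there is no internal proof to compare against; the question is only whether your argument stands on its own. The first half of your proposal is correct and standard: Burnside plus the focal subgroup theorem gives $D/O^p(D)\cong P/[P,N_D(P)]=P/[P,t]\cong C_P(t)$, so $O^p(D)<D$ unless $t$ inverts $P$; and your reductions to $D=\langle t^D\rangle$, $O_{p'}(D)=1$, and ``$p$-length $1$ is equivalent to $P\trianglelefteq D$'' are all sound.

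The gap is that the remaining assertion --- $P\trianglelefteq D$ when a $2$-element $t$ inverts the abelian non-cyclic Sylow $p$-subgroup $P$, $O_{p'}(D)=1$ and $D=\langle t^D\rangle$ --- is the entire content of the Smith--Tyrer theorem, and your paragraph addressing it is a programme rather than a proof. Concretely: the normal $p$-complement criteria you propose to feed the local data into cannot close the argument, because in this configuration $N_D(P)$ itself has no normal $p$-complement (modulo $O_{p'}(C_D(P))$ it is generalized dihedral, $P\rtimes\langle t\rangle$ with $t$ inverting $P$ non-cyclic, and a normal complement to $P$ there would be a central subgroup of order $2$, which $t$ is not); so Frobenius and Thompson give nothing, and indeed the desired conclusion is not a normal $p$-complement but normality of $P$. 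The observations that distinct Sylow $p$-subgroups of a minimal counterexample intersect trivially and that $tt^g$ centralizes $P\cap P^g$ set the stage for, but do not replace, the exceptional-character/block-theoretic analysis that Smith and Tyrer actually carry out (in the spirit of the Feit--Thompson paper \cite{FTh} on self-centralizing subgroups of order $3$, which this paper also relies on). The CFSG-based alternative you mention in parentheses would need the classification of simple groups with abelian Sylow $p$-subgroups together with a case check of the fusion index, none of which is supplied. So the proof is incomplete at exactly the step you yourself flag as the main obstacle; for the purposes of this paper the correct move is simply to cite \cite{SmTy}, as the authors do.
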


The next lemma is required when we apply Theorems~\ref{G=H} and \ref{108}.
\begin{lemma}\label{centralizerH} Let $x \in J\setminus \langle y \rangle$. Then $C_{H/\langle y \rangle }(x\langle y \rangle) \leq K/\langle y \rangle$.
\end{lemma}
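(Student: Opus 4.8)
Throughout I would work modulo $\langle y\rangle$, writing $\overline{X}=X\langle y\rangle/\langle y\rangle$ for $X\le H$; by Lemma~\ref{K/y} the group $\overline{K}=K/\langle y\rangle$ has shape $3^3{:}\Alt(4)$ or $3^3{:}\Sym(4)$, with $\overline{J}=O_3(\overline{K})$ elementary abelian of order $3^3$. Fix $x\in J\setminus\langle y\rangle$, put $\overline{P}=C_{\overline{H}}(\overline{x})$ and let $P\le H$ be its full preimage; the target is $\overline{P}\le\overline{K}$. The proof rests on four observations. First, $N_{\overline{H}}(\overline{J})=\overline{K}$, since $N_H(J)=N_G(J)\cap C_G(y)=M\cap C_G(y)=K$ (because $y\in J$), and $C_{\overline{H}}(\overline{J})=\overline{J}$, since $C_H(J)=C_G(J)\cap C_G(y)=J$ by Lemma~\ref{basic}(ii). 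Second, $\overline{J}\le\overline{P}$ (as $\overline{J}$ is abelian) and $\overline{x}\in Z(\overline{P})$ (by definition of $\overline{P}$). Third, $J$ normalises $P=\{g\in C_G(y):x^g\in x\langle y\rangle\}$ and lies in $P$: if $j\in J$ and $g\in P$ then $x^{g^j}=(x^g)^j=x^g$ because $x$ and $x^g$ lie in the abelian group $J\ni j$. Fourth, $O_{3'}(\overline{P})=1$: the preimage $N$ of $O_{3'}(\overline{P})$ in $P$ has $y\in Z(N)$, so its $3'$-Hall subgroup is characteristic in $N$, hence normal in $P$ and $J$-invariant, hence trivial by Lemma~\ref{signalizer1}.

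The argument then proceeds by contradiction: suppose $\overline{P}\not\le\overline{K}$. By the first observation, $N_{\overline{P}}(\overline{J})=\overline{P}\cap\overline{K}=C_{\overline{K}}(\overline{x})=:\overline{D}$, a proper subgroup of $\overline{P}$ with $\overline{J}\trianglelefteq\overline{D}$. A straightforward calculation with $\overline{J}$ as a module for the complement in $\overline{K}$ identifies $\overline{D}$: the stabiliser of a vector of $\overline{J}$ in $\Alt(4)$ (resp.\ $\Sym(4)$) is a $2$-group in all but one of its orbits on $\overline{J}^{\#}$ and has a normal Sylow $3$-subgroup of order $3$ in the exceptional orbit. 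Hence $\overline{D}/\overline{J}$ is either a $2$-group, and then a Sylow $3$-subgroup of $\overline{D}$ is $\overline{J}$; or $|\overline{D}/\overline{J}|_3=3$, and then a Sylow $3$-subgroup $\overline{R}_0$ of $\overline{D}$ has order $3^4$ with $\overline{J}\trianglelefteq\overline{R}_0$ and, as an easy check shows, $\overline{J}$ the unique elementary abelian subgroup of order $3^3$ of $\overline{R}_0$. In either case one deduces that this Sylow $3$-subgroup of $\overline{D}$ is already Sylow in $\overline{P}$: a larger Sylow $3$-subgroup of $\overline{P}$ would normalise $\overline{J}$ (directly, or because $\overline{J}$ is characteristic in $\overline{R}_0$) and hence lie in $N_{\overline{P}}(\overline{J})=\overline{D}$, which is absurd.

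Now I would dispose of the cases. If $\overline{D}=\overline{J}$, then $\overline{J}$ is an abelian self-normalising Sylow $3$-subgroup of $\overline{P}$ with $C_{\overline{P}}(\overline{J})=\overline{J}$, so Burnside's normal $p$-complement theorem and $O_{3'}(\overline{P})=1$ give $\overline{P}=\overline{J}=\overline{D}$, a contradiction. If $|\overline{D}/\overline{J}|=2$, then $\overline{J}$ is abelian and non-cyclic and $|N_{\overline{P}}(\overline{J}):C_{\overline{P}}(\overline{J})|=2$, so the Smith--Tyrer Theorem~\ref{SmT} gives that either $\overline{P}$ is $3$-soluble of $3$-length $1$ --- whence $O_{3'}(\overline{P})=1$ forces $\overline{J}=O_3(\overline{P})\trianglelefteq\overline{P}$, contradicting $N_{\overline{P}}(\overline{J})=\overline{D}<\overline{P}$ --- or $O^3(\overline{P})<\overline{P}$, which I would rule out by descending: $O^3$ is characteristic, so its iterated preimages in $P$ remain $J$-invariant, each stage has trivial $O_{3'}$, the Sylow $3$-subgroups shrink, and one reaches a stage from which, using that $\overline{x}$ is central in $\overline{P}$, the normality of $\overline{J}$ in $\overline{P}$ can be recovered, or else a non-trivial $J$-invariant $3'$-subgroup of $G$ is produced, contradicting Lemma~\ref{signalizer1}. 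Finally, when $|\overline{D}/\overline{J}|_3=3$ and $\overline{R}_0\in\Syl_3(\overline{P})$, I would use that $\overline{J}$, being characteristic in $\overline{R}_0$, is weakly closed in $\overline{R}_0$ with respect to $\overline{P}$, and that $C_{\overline{R}_0}(\overline{J})=\overline{J}$, so $\overline{D}=N_{\overline{P}}(\overline{J})$ controls $\overline{P}$-fusion in $\overline{J}$; together with $O_{3'}(\overline{P})=1$ this again forces $\overline{J}\trianglelefteq\overline{P}$, the final contradiction. (For $\overline{K}\cong 3^3{:}\Sym(4)$ the list for $\overline{D}$ contains two further entries, with $\overline{D}/\overline{J}$ a four group or $\Sym(3)$; these are handled by the same weak-closure device or by applying Theorem~\ref{SmT} inside an index-$2$ overgroup of $\overline{J}$ in $\overline{P}$.)

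The routine ingredients are the module computation pinning down $\overline{D}$ and the structure of $\overline{R}_0$, together with the normal-complement bookkeeping. The main obstacle is eliminating the alternative $O^3(\overline{P})<\overline{P}$ produced by Theorem~\ref{SmT} --- and, relatedly, the cases with $|\overline{D}/\overline{J}|\notin\{1,2\}$ where Theorem~\ref{SmT} does not apply verbatim: there one must run the descent carefully, at each stage re-applying Burnside or Smith--Tyrer, or exploiting the rigidity of $\overline{J}$ as a section of the natural $\Alt(6)$-module (Lemmas~\ref{M_*mod}--\ref{Modprops}) in combination with Lemma~\ref{signalizer1}.
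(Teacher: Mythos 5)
Your strategy is genuinely different from the paper's: you stay inside $\overline{H}=H/\langle y\rangle$ and try to force $\overline{J}\trianglelefteq\overline{P}$ for $\overline{P}=C_{\overline{H}}(\overline{x})$ by purely internal fusion and transfer arguments, whereas the paper sets $U=\langle x,y\rangle$ and splits the analysis of $C_G(U)$ according to how many $G$-conjugates of $Z$ the group $U$ contains. That reduction is not cosmetic, and omitting it breaks your hardest case, namely $|\overline{D}/\overline{J}|_3=3$. That case is exactly the image of the four ``tangent'' lines $\langle x,y\rangle$ whose unique singular point is $K$-conjugate to $Z$; equivalently $\langle\overline{x}\rangle=\langle\overline{Z}\rangle$, so $\overline{P}$ contains the full Sylow $3$-subgroup $\overline{C_S(y)}\cong 3\wr 3$ of $\overline{H}$ and is a priori large. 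Your proposed finish --- $\overline{J}$ weakly closed in $\overline{R}_0$, hence $N_{\overline{P}}(\overline{J})$ controls fusion in $\overline{J}$, and with $O_{3'}(\overline{P})=1$ this ``forces'' $\overline{J}\trianglelefteq\overline{P}$ --- is not a valid inference. The group $\Alt(9)$ with $\overline{J}=\langle(1,2,3),(4,5,6),(7,8,9)\rangle$ satisfies all three hypotheses (weak closure in a Sylow $3$-subgroup isomorphic to $3\wr 3$, control of fusion by the normaliser, trivial $O_{3'}$), yet $\overline{J}$ is not normal there. No fusion argument internal to $\overline{P}$ can close this case: the paper closes it by leaving $\overline{H}$, observing that $C_G(U)\le C_G(Z^g)=L^g$ and invoking the hypothesised structure of $L=N_G(Z)$ through Lemmas~\ref{Mfusion}(iii) and \ref{CLy}. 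Your proof never uses that input, and it must.

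Two further points. First, in the Smith--Tyrer branch $O^3(\overline{P})<\overline{P}$ your ``descent'' is only gestured at, and the situation is genuinely harder than in the paper: there the relevant involution inverts the whole rank-two Sylow subgroup $J/U$ of $C_G(U)/U$, giving $J/U\le O^3$ and an instant contradiction, whereas your involution must centralise $\overline{x}$ and so inverts only a hyperplane of $\overline{J}$, leaving the possibility $\overline{P}=\langle\overline{x}\rangle\times O^3(\overline{P})$, which needs a second, carefully set-up application of Smith--Tyrer (one must check that $[\overline{J},a]$ is Sylow in $O^3(\overline{P})$ and self-centralising there). Second, your orbit bookkeeping for $\overline{K}\cong 3^3{:}\Sym(4)$ is off: the vector stabilisers in the complement have orders $4$, $2$ and $3$ (compare the centraliser orders $108$, $54$, $81$ in Table~3), the order-$4$ one being cyclic, and $\Sym(3)$ does not occur; in particular there is an order-$4$ stabiliser case where Theorem~\ref{SmT} does not apply verbatim and your parenthetical remedy is not worked out. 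The first paragraph's gap is the fatal one; these are the places where the write-up would still need substantial repair even after that is fixed.
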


\begin{proof} Set $U = \langle x,y \rangle$ and let $W$ be the preimage in $H$ of $C_{H/\langle y \rangle }(x\langle y \rangle)$. Then $|U| = 9$, $W$ normalizes $U$ and $O^3(W)$ centralizes $U$.

Assume first that $U$ contains a $G$-conjugate of $Z$. If this is a $K$-conjugate of $Z$, we may assume that $U = \langle Z, y \rangle$. Thus $C_G(U) = C_L(y) \leq K$ by Lemmas~\ref{Mfusion} (iii) and ~\ref{CLy}.

Assume  that $U$ contains a conjugate  $Z^g$ of $Z$, which is not conjugate to $Z$ in $K$. Then $U$ contains exactly two conjugates of $Z$ and $W= C_G(U)$.
 Moreover $(Z^g)^K$ is of length three or six, depending on whether $K/\langle y\rangle \cong K_a$ or $K_b$ respectively. In particular $J$ is a Sylow 3-subgroup of $W$. Now  $W \leq L^g$. As $J$ is a Sylow 3-subgroup of $W$, we have that $U \not\leq Q^g$ and so $Q^gU = Q^g\langle y \rangle$ is a Sylow 3-subgroup of $K^g$. We now have that $C_{K^g}(y) \leq N_{K_g}( Q^gU) $.  Since  $J \leq Q^gU$, we have that $J = J(Q^gU)$ and so $$W \le N_G( J(Q^gU)) \le N_G(J) =M.$$ But then $W \le K$ and we are done.

So  we finally consider the case when $U^\#$ just consists of $G$-conjugates of $y$. As the centre of $C_S(y)= JC_Q(y)$ is equal to $\langle y,Z \rangle$, we again  see that $J$ is a Sylow 3-subgroup of $C_G(U)$. In particular, by the Frattini Argument  $W = (W \cap M) C_G(U)$. Thus it suffices to show that $C_G(U) \le M$.   By Lemma \ref{centralizer9},  a Sylow 2-subgroup of $C_M(U)$ has  order at most 2.

So we have

\begin{claim}\label{clm}
\begin{enumerate}
\item $J/U$ is a Sylow 3-subgroup of $C_G(U)/U$,
\item $|N_{C_G(U)/U}(J/U) : J/U| \leq 2$.
\end{enumerate}
\end{claim}
 Assume that $C_G(U)$ is not $3$-soluble. Then  Burnside's normal $p$-complement Theorem \cite[Theorem 7.4.3]{Gor} and \ref{clm} (i) and (ii) imply $|N_{C_G(U)/U}(J/U) : J/U|  = 2$. Therefore Theorem~\ref{SmT} shows    $O^3(C_G(U)/U) < C_G(U)/U$. However,  there is an involution $a$ from $C_M(U)$ acting on $J/U$ and, by Lemma \ref{det1},  it acts on $J$ with determinant 1. Thus  $a$  inverts $J/U$. In particular $J/U$ is contained in $O^3(C_G(U)/U)$, which is a contradiction.

So we have shown that $C_G(U)/U$ is 3-soluble. Using  Lemma \ref{signalizer1} yields  $O_{3^\prime}(C_G(U)/U) = 1$ and so $J/U$ is normal in  $C_G(U)/U$. But then $C_G(U) \leq M$ and this proves the lemma.
\end{proof}

\begin{lemma}\label{fusionK}
Let $\rho \in K$ be an element of order three. Then $\rho$ is $H$-conjugate  to an element of  $J$ if and only if  $\rho \in J$.
In particular, $J$ is strongly closed in $C_S(y)$ with respect to $H$.
\end{lemma}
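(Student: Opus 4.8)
The reverse implication is immediate, so suppose $\rho = u^h$ with $u \in J^\#$ and $h \in H$, where $\rho$ has order three.  If $u \in \langle y\rangle$ then $\rho \in \langle y\rangle^h = \langle y\rangle \le J$ since $h$ centralises $y$; hence we may assume $u \in J \setminus \langle y\rangle$, and we put $U = \langle u, y\rangle \le J$ and $U' = U^h = \langle \rho, y\rangle \le K$.  Assume for a contradiction that $\rho \notin J$.  Then $U' \cap J = \langle y\rangle$ and, as $\rho \in M\setminus C_M(J)$, $\rho$ acts non-trivially on $J$; since $\rho\langle y\rangle$ is a $3$-element of $K/\langle y\rangle \cong K_a$ or $K_b$ lying outside $O_3(K/\langle y\rangle) = J/\langle y\rangle$, one computes $|C_J(\rho)| = 9$ and hence $|C_K(\rho)| = |C_J(\rho)\langle\rho\rangle| = 27$.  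The plan is to derive a contradiction by comparing $|C_H(u)|$ and $|C_H(\rho)|$, which are equal because $u^h = \rho$.  On the one hand, $C_H(u) = C_G(\langle u,y\rangle) \le K$ by Lemma~\ref{centralizerH}, so $C_H(u) = C_K(u) \ge J$ and therefore $3^4 \mid |C_H(u)|$.  On the other hand it will suffice to prove
\begin{equation*}
C_H(\rho) = C_G(U') \le K,
\end{equation*}
for then $|C_H(\rho)| = |C_K(\rho)| = 27$, which is incompatible with $3^4 \mid |C_H(\rho)| = |C_H(u)|$.

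To establish $C_G(U') \le K$ --- equivalently $C_G(U') \le M$, as $C_G(U') \le H$ always --- I would first reduce, by replacing $\rho$ by a suitable $K$-conjugate (which changes neither $J$, since $K \le M$, nor the condition $\rho \notin J$), to the case $U' \le C_S(y)$; this is legitimate because $C_S(y) \le M \cap H = K$ is a Sylow $3$-subgroup of $K$.  Then $C_S(y) = J\langle\rho\rangle$, so $C_{C_S(y)}(\rho) = C_J(\rho)\langle\rho\rangle$, and since $Z(C_S(y)) = \langle y, Z\rangle$ lies inside both $J$ and $C_{C_S(y)}(\rho)$ while $|C_J(\rho)| = 9 = |Z(C_S(y))|$, we obtain $C_J(\rho) = \langle y, Z\rangle$; in particular $\rho$ centralises $Z$.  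Next I would repeat the argument of Lemma~\ref{centralizerH} with $U'$ in place of $U$: the dichotomy ``$U'$ contains a $G$-conjugate of $Z$'' versus ``$(U')^\# \subseteq y^G$'' is inherited from $U$ via $h$ (using $(y^G)^h = y^G$ and $Z^G \cap Q = \{Z\}$ from Lemma~\ref{Mfusion} for the conjugates of $Z$); the signalizer statement Lemma~\ref{signalizer1} applied to the relevant conjugate of $J$, the determinant-one involution supplied by Lemmas~\ref{det1} and~\ref{centralizer9}, and the theorem of Smith and Tyrer (Theorem~\ref{SmT}) together force $C_G(U')$ to be $3$-soluble with a normal Sylow $3$-subgroup; finally, since $\langle y, Z\rangle = C_J(\rho) \le C_G(U')$ and, by Lemma~\ref{CLy}, $C_G(\langle y, Z\rangle) = C_L(\langle y, Z\rangle) \le M$, a Frattini argument identifies the normaliser of that Sylow subgroup with $M = N_G(J)$ itself rather than with a $G$-conjugate of it, giving $C_G(U') \le M$ as required.

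I expect the genuinely delicate point to be this last step, namely obtaining $C_G(U') \le M$ and not merely $C_G(U') \le N_G(J^h)$ for the conjugate $J^h$ produced by the Lemma~\ref{centralizerH}-style analysis: the observation that $\rho$ centralises $Z$ --- so that $C_G(U')$ contains $C_J(\rho) = \langle y, Z\rangle$, whose full centraliser is already known to lie in $M$ --- is what breaks the symmetry, while everything else is essentially a transcription of the proof of Lemma~\ref{centralizerH}.  The final assertion then follows at once: $C_S(y) \le M \cap H = K$ because $J = J(S)$ is normal in $S$, so any $u \in J^\#$ carried into $C_S(y)$ by an element of $H$ lies in $K$, has order $3$, and is $H$-conjugate to an element of $J$, hence lies in $J$ by the first part.
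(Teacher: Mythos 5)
Your overall strategy is the same as the paper's: set $\rho=u^{h}$ with $u\in J^{\#}$, note that $J\le C_{H}(u)$ forces $3^{4}\mid|C_{H}(\rho)|=|C_{H}(u)|$, compute $|C_{K}(\rho)|=27$, and derive a contradiction by showing $C_{H}(\rho)$ is controlled by $K$. The computations $|C_{J}(\rho)|=9$, $C_{J}(\rho)=\langle y,Z\rangle$ and $|C_{K}(\rho)|=27$ are all correct. The gap is in the step you yourself flag as delicate, namely $C_{G}(U')\le M$. You cannot ``repeat the argument of Lemma~\ref{centralizerH} with $U'$ in place of $U$'': that proof depends throughout on $U\le J$ (it needs $J\le C_{G}(U)$, an involution of $C_{M}(U)$ acting on $J/U$ with determinant one, and $J/U$ as a Sylow $3$-subgroup of $C_{G}(U)/U$), and by hypothesis $\rho\notin J$, so $U'\not\le J$ and $J\not\le C_G(U')$. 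What conjugating the *conclusion* of Lemma~\ref{centralizerH} actually gives is $C_{G}(U')=C_{G}(U)^{h}\le N_{G}(J^{h})$, and the whole difficulty is to show $J^{h}=J$; invoking ``a Frattini argument'' does not do this. The step can be salvaged (the normal Sylow $3$-subgroup of $C_{G}(U')$ contains $Z$, so the abelian group $J^{h}$ centralises $\langle y,Z\rangle$, lands in $C_{G}(\langle y,Z\rangle)\le K\le M$ by Lemma~\ref{CLy}, and is then forced to equal $J$ by Lemma~\ref{basic}(iii), contradicting $\rho\in J^{h}\setminus J$), but this must be written out, and it only applies in the case $(U')^{\#}\subseteq y^{G}$; your Smith--Tyrer machinery does not cover the case in which $U'$ contains a $G$-conjugate of $Z$, which you have not excluded.

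The paper avoids all of this with one extra reduction that you omit: since $C_{S}(y)/\langle y\rangle\cong 3\wr 3$ and $C_{Q}(y)\not\le J$, every element of order three in $C_{S}(y)\setminus J$ is $C_{S}(y)$-conjugate into $Q$, so one may assume $\rho\in Q$. Then $C_{K}(\rho)=\langle y,Z,\rho\rangle\le Q$, and since $Z^{G}\cap Q=\{Z\}$ (Lemma~\ref{Mfusion}(iii)) the group $N_{H}(C_{K}(\rho))$ must normalise $Z$, hence lies in $L\cap H\le K$ by Lemma~\ref{CLy}; therefore $C_{K}(\rho)$ is self-normalising in $C_{H}(\rho)$ and so is a Sylow $3$-subgroup of it. Thus $|C_{H}(\rho)|_{3}=27<3^{4}$, which already contradicts $J^{h}\le C_{H}(\rho)$ --- no case division, no appeal to Lemma~\ref{centralizerH} or Theorem~\ref{SmT}, and no need for the full containment $C_{H}(\rho)\le K$. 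Note also that the same reduction disposes of your troublesome case for free: a $G$-conjugate of $Z$ inside $U'\le Q$ would equal $Z\le J$, which is impossible as every order-three subgroup of $U'$ other than $\langle y\rangle$ is generated by an element outside $J$.
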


\begin{proof}    Assume that  $\rho \in C_S(y) \setminus J$.  As $C_S(y)/\langle y \rangle \cong 3\wr 3$ by Lemma~\ref{K/y},  all elements of order three in the coset $J \rho$ are conjugate into $\langle y \rangle \rho$.  As $C_Q(y) \not\leq J$, we may assume that $\rho \in Q$. So, again using Lemma~\ref{K/y}, we have $C_K(\rho) = \langle y, Z, \rho \rangle \leq Q$. As $Z$ is weakly closed in $Q$ by Lemma~\ref{Mfusion} (iii), we deduce  $N_H(C_K(\rho)) \leq N_G(Z)=L$. As $H \cap L \le K$ by Lemma~\ref{CLy},   $C_K(\rho)$ is a Sylow 3-subgroup of $C_{H}(\rho)$. In particular $\rho$ is not conjugate to any element of $J$. This proves the lemma.
\end{proof}

 Let $$K_c = K_a'=\langle (1,2,3),
(4,5,6), (7,8,9), (1,2)(4,5),(1,2)(7,8) \rangle.$$

\begin{lemma}\label{F} Assume that $M/J \cong \Mat(10)$. Then $H$ has a normal subgroup $F$ of index $3$ and $(F\cap K)/\langle y \rangle \cong K_c$.
\end{lemma}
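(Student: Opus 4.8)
The plan is to apply the transfer (focal subgroup) machinery to $H$, using the Sylow $3$-subgroup $T:=C_S(y)$ of $H$, and to show that the focal subgroup $T\cap H'$ equals $[J,K]$; the existence of $F$ and the identification of $F\cap K$ are then formal.

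First I would assemble the local data. Since $Z(T)=\langle y,Z\rangle$ and, by Lemma~\ref{Mfusion}(iii), $Z$ is the only $G$‑conjugate of $Z$ lying in $Q$, among the four subgroups of order $3$ of $Z(T)$ (all contained in $Q$) only $\langle Z\rangle$ is $G$‑conjugate to $Z$; hence $N_G(Z(T))\cap H$ normalizes $Z$, and with Lemma~\ref{CLy} this gives $N_H(Z(T))\le C_L(y)\le K$, whence $T\in\Syl_3(H)$ and $N_H(T)=T$. Also $N_H(J)=N_G(J)\cap H=K$ and $C_H(J)=J$. By Lemma~\ref{K/y}, $\bar T:=T/\langle y\rangle\cong 3\wr 3$, with $\bar J:=J/\langle y\rangle$ its base subgroup (the unique elementary abelian subgroup of order $3^3$, hence $J(\bar T)$), and $\bar K:=K/\langle y\rangle\cong K_a\cong 3^3{:}\Alt(4)$ acting on $\bar J$ as the $3$‑dimensional irreducible $\GF(3)\Alt(4)$‑module. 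In particular $J=\langle y\rangle\oplus W$ with $W$ the $3$‑dimensional irreducible, so $[J,K]=W$ has index $3$ in $J$ and index $3^2$ in $T$, and $y\notin[J,K]$.

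The heart of the argument is the claim $T\cap H'=[J,K]$. One inclusion is easy: $J$ is weakly closed in $T$ (Lemma~\ref{fusionK}), so $K=N_H(J)$ controls $H$‑fusion in $J$, whence $H'\cap J=[J,K]$; since $T\cap H'\trianglelefteq T$ contains $[J,K]$ (index $9$) and cannot equal $T$ (that would force $H'\cap J=J$), it is either $[J,K]$ or of index $3$ in $T$, and I must rule out the latter. Passing to $\bar H:=H/\langle y\rangle$, it suffices to prove $\bar T\cap\bar H'\le\bar J$. Two inputs drive this. \textbf{(a)} $\bar J$ is a TI‑subgroup of $\bar H$: if $1\neq\bar x\in\bar J\cap\bar J^{\bar h}$ then $\bar J^{\bar h}\le C_{\bar H}(\bar x)\le\bar K$ by Lemma~\ref{centralizerH}, and $\bar J$ is the only elementary abelian $3^3$ in $\bar K$ (if $\bar e\in\bar T\setminus\bar J$ has order $3$ then $C_{\bar K}(\bar e)=\langle\bar e\rangle C_{\bar J}(\bar e)$ has order $9$), so $\bar J^{\bar h}=\bar J$. \textbf{(b)} No element $\bar v\in\bar T\setminus\bar J$ of order $3$ is $\bar H$‑conjugate to an element of $\bar T$ in a different $\bar J$‑coset: using strong closure, (a) and $C_{\bar K}(\bar v)=\langle\bar v\rangle C_{\bar J}(\bar v)$ one checks that $C_{\bar T}(\bar v)=\langle\bar v\rangle C_{\bar J}(\bar v)\in\Syl_3(C_{\bar H}(\bar v))$; a hypothetical conjugacy $\bar v\mapsto\bar v'$ (with $\bar v'\in\bar T\setminus\bar J$ of order $3$) can be adjusted inside $C_{\bar H}(\bar v')$ to carry $C_{\bar T}(\bar v)$ onto $C_{\bar T}(\bar v')$, hence to carry $C_{\bar J}(\bar v)$ onto $C_{\bar J}(\bar v')$; by (a) the conjugating element then lies in $N_{\bar H}(\bar J)=\bar K$, and in $\bar K/\bar J\cong\Alt(4)$ the two non‑trivial $\bar J$‑cosets map to a $3$‑element and its inverse, which are not conjugate. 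Now I would invoke Alperin's fusion theorem: $\bar T\cap\bar H'$ is generated by the subgroups $[\bar R,N_{\bar H}(\bar R)]$ as $\bar R$ runs over $\bar T$ and the essential subgroups of the fusion system of $\bar H$. The term for $\bar T$ is $\bar T'=[\bar T,\bar T]\le\bar J$ (as $N_{\bar H}(\bar T)=\bar T$). Each essential $\bar R$ is $\bar H$‑centric with $N_{\bar H}(\bar R)/\bar RC_{\bar H}(\bar R)$ having a strongly $3$‑embedded subgroup; running through the subgroups of $3\wr 3$ this leaves only $\bar R=\bar J$, the self‑centralizing $C_3\times C_3$'s $\langle\bar v\rangle\times Z(\bar T)$ with $\bar v\in\bar T\setminus\bar J$ of order $3$, and the subgroup $\cong 3^{1+2}_+$ meeting $\bar J$ in a group of order $9$ (cyclic groups of order $9$ and the $3^{1+2}_-$‑subgroups are excluded, their outer automorphism groups lying in $C_6$ and in $C_2\times\Sym(3)$ respectively, neither having a strongly $3$‑embedded subgroup). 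For $\bar R=\bar J$ the contribution is $[\bar J,\bar K]\le\bar J$; for the other two families, strong closure together with (b) forces any $\bar h\in N_{\bar H}(\bar R)$ to act trivially on $\bar R$ modulo $\bar R\cap\bar J$, so the contribution again lies in $\bar J$. Hence $\bar T\cap\bar H'\le\bar J$, so $\bar T\cap\bar H'=\bar J$ and, pulling back, $T\cap H'=[J,K]$.

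Given this, the conclusion is routine. Since $T\cap H'=[J,K]=W$ has index $9$ in $T$, $|H:O^3(H)|=|T/W|\in\{3,9\}$. If it is $9$, then the image $\bar y$ of $y$ has order $3$ and so lies in an index‑$3$ subgroup of the group $H/O^3(H)$ of order $9$; if it is $3$, then comparing with $|\bar H:O^3(\bar H)|=3$ forces $y\in O^3(H)$. In either case there is $F\trianglelefteq H$ with $|H:F|=3$ and $y\in F$. Finally $K\not\le F$: as $T\le K$ and $T$ surjects onto $H/O^3(H)$, $K$ lies in no proper normal subgroup of $H$ containing $O^3(H)$; hence $|K:K\cap F|=3$, so $(F\cap K)/\langle y\rangle$ is a subgroup of index $3$ in $K/\langle y\rangle\cong K_a$. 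Any such subgroup contains $O_3(K_a)\cong 3^3$ together with a subgroup of index $3$ of $K_a/O_3(K_a)\cong\Alt(4)$, necessarily $V_4$, and so equals $K_a'=K_c$; thus $(F\cap K)/\langle y\rangle\cong K_c$.

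The main obstacle is the middle paragraph: the computation of $T\cap H'$. The genuinely non‑formal parts there are statement (b) — that order‑$3$ elements outside $\bar J$ are not subject to cross‑coset fusion — and the ensuing verification, for each essential subgroup, that its contribution to the focal subgroup stays inside $\bar J$; the remainder is bookkeeping with the subgroup structure of $3\wr 3$ and of $K_a$.
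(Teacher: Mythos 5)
Your argument is correct in substance, but it takes a much heavier route than the paper. The paper's proof is essentially a one-line application of Gr\"un's first theorem: since $N_H(T)=T$ (so $N_H(T)'=T'\le J$) and $J$ is strongly closed in $T=C_S(y)$ with respect to $H$ (Lemma~\ref{fusionK}), every $T\cap (T')^h$ lies in $T\cap J^h\le J$, whence $T\cap H'\le J$ immediately; the rest is the same bookkeeping you do at the end. You instead re-derive this fusion control from first principles: a TI-property for $\bar J$ (via Lemma~\ref{centralizerH}), the non-fusion statement (b) for order-$3$ elements outside $\bar J$, and a run of Alperin's fusion theorem over the essential subgroups of $3\wr 3$. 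I checked the main steps (the TI argument, the Sylow computation $C_{\bar K}(\bar v)=\langle\bar v\rangle Z(\bar T)$, the use of the two $\Alt(4)$-classes of $3$-elements, and the essential-subgroup inventory) and they hold up, so your proof works; it just replaces one citation of Gr\"un by a page of fusion analysis. Two small cautions. First, the exact equality $T\cap H'=[J,K]$ is both more than you need and not fully justified as stated: control of fusion in $J$ bounds the generators $u^{-1}u^h$ with $u\in J$, but generators coming from $u\in T\setminus J$ land in $J$ without obviously landing in $[J,K]$, so a priori $T\cap H'\in\{[J,K],J\}$; fortunately your final paragraph covers both possibilities, since all that is used is $[J,K]\le H'$, $T\cap H'\le J$ and $y\in F$. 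Second, the assertion that any index-$3$ subgroup of $K_a$ contains $O_3(K_a)$ is false in general (there are complements to an index-$3$ subgroup of $O_3(K_a)$); what saves you is that your $F$ contains $J=[J,K]\langle y\rangle$ by construction, so $(F\cap K)/\langle y\rangle$ does contain $O_3(K_a)$ and hence equals $K_a'=K_c$.
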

\begin{proof} By Lemma~\ref{K/y} (i),  $K/J \cong  \Alt(4)$. Notice that as $Z(C_S(y))= \langle y,Z \rangle$,   $C_S(y)\in \syl_3(H)$ and $N_H(C_S(y)) \le K$ by Lemma~\ref{CLy}.
In particular $(N_{H}(C_S(y)))^\prime \leq J$.
Furthermore,  for any Sylow 3-subgroup $P$  of $H$,   $P^\prime$ is contained in a $H$-conjugate of $J$. Hence by Lemma \ref{fusionK}, the focal subgroup $$\langle( N_{H}(C_S(y)))^\prime, C_S(y) \cap P^\prime\mid  P \in \Syl_3(H) \rangle \leq J.$$ Thus  Gr\"un's Theorem \cite[Theorem~7.4.2]{Gor} implies $O^3(H/\langle y \rangle) < H/\langle y \rangle$. The action of $K_a$ on $J$ shows that $J \leq O^3(H)\langle y \rangle$. Hence there is a subgroup $F$ containing $J$, which is of index 3 in $H$ and $(F\cap K)/\langle y \rangle \cong K_c$.
\end{proof}

%

The next theorem is the final step before we  achieve our  goal.

\begin{theorem}\label{a4} Suppose that  $G$ satisfies Hypothesis~\ref{McLHyp}. Then,  for all $j \in J^\#$, $$C_G(O_3(C_G(j))) \leq O_3(C_G(j)).$$
\end{theorem}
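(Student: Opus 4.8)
The plan is to use the fusion information already assembled to reduce the statement to a single centraliser, namely $H = C_G(y)$, and then to force $H$ to be soluble by invoking Theorems~\ref{108} and~\ref{G=H}. Let $j \in J^\#$. Since $J = J(S)$, Lemma~\ref{Mfusion} shows that $M$ controls $G$-fusion of subgroups of order $3$ in $J$ and has exactly two orbits on them, with representatives $Z$ and $\langle y\rangle$; hence $\langle j\rangle$ is $G$-conjugate to $Z$ or to $\langle y\rangle$, and since the assertion of the theorem is invariant under $G$-conjugacy we need only treat these two cases. If $\langle j\rangle$ is conjugate to $Z$, then $C_G(j)$ is conjugate to $C_G(Z)$ and the claim is merely a conjugate of Hypothesis~\ref{McLHyp}(iii). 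So we may assume $j = y$ and must show $C_G(O_3(H)) \le O_3(H)$. As $\langle y\rangle$ is a normal $3$-subgroup of $H$, we have $y \in O_3(H)$, so $C_G(O_3(H)) \le C_G(y) = H$, and it suffices to prove $C_H(O_3(H)) \le O_3(H)$. Since $J \le H$, Lemma~\ref{signalizer1} gives $O_{3'}(H) = 1$; thus once $H$ is known to be soluble we obtain $F^*(H) = F(H) = O_3(H)$ and hence $C_H(O_3(H)) = C_H(F^*(H)) \le O_3(H)$. Everything therefore reduces to proving that $H$ is soluble.

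Suppose first that $M/J \cong \Mat(10)$. By Lemma~\ref{K/y}(i), $K/\langle y\rangle \cong K_a$, and by Lemma~\ref{F} there is a normal subgroup $F$ of $H$ of index $3$ with $(F\cap K)/\langle y\rangle \cong K_c$, which is exactly the subgroup appearing in Theorem~\ref{108}. I would apply Theorem~\ref{108} inside $\overline F := F/\langle y\rangle$, with subgroup $(F\cap K)/\langle y\rangle$ and $T := J/\langle y\rangle$. Here $T$ is elementary abelian of order $27$; as $C_S(y)/\langle y\rangle \cong 3\wr 3$ is a Sylow $3$-subgroup of $H/\langle y\rangle$ of order $3^4$ and $|H:F| = 3$, we get $|\overline F|_3 = 27 = |T|$, so $T \in \Syl_3(\overline F)$ and likewise $T \in \Syl_3((F\cap K)/\langle y\rangle)$. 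Since $F \le H$, one has $N_F(J) = F \cap N_G(J) = F \cap M = F \cap K$, so $N_{\overline F}(T) = N_F(J)/\langle y\rangle = (F\cap K)/\langle y\rangle$. Finally, for $\overline x \in T^\#$, Lemma~\ref{centralizerH} gives $C_{\overline F}(\overline x) \le C_{H/\langle y\rangle}(\overline x)\cap\overline F \le (K/\langle y\rangle)\cap\overline F = (F\cap K)/\langle y\rangle$. Theorem~\ref{108} then yields $\overline F = (F\cap K)/\langle y\rangle$, i.e. $F \le K$. Since $K$ is soluble, so is $F$, and so — as $|H:F| = 3$ — is $H$.

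Now suppose $M/J \cong \Mat(10)\times 2$. By Lemma~\ref{K/y}(ii), $\overline K := K/\langle y\rangle \cong K_b$, which is the subgroup in Theorem~\ref{G=H}, and since $J \trianglelefteq K$ and $|J/\langle y\rangle| = 27 = |O_3(K_b)|$ we have $J/\langle y\rangle = O_3(\overline K)$. I would work inside $\overline H := H/\langle y\rangle$. For each nontrivial $\overline x \in J/\langle y\rangle$, Lemma~\ref{centralizerH} gives $C_{\overline H}(\overline x) \le \overline K$. For strong closure, let $\overline\jmath \in J/\langle y\rangle$ be nontrivial and $\overline h \in \overline H$ with $\overline\jmath^{\,\overline h} \in \overline K$; choose $h \in H$ mapping to $\overline h$ and lift $\overline\jmath$ to an element $\jmath$ of order $3$ in $J$. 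Then $\jmath^h$ has order $3$, lies in $K$ (its image lies in $\overline K$, so the coset $\jmath^h\langle y\rangle$ is contained in $K$), and is $H$-conjugate to $\jmath \in J$; hence $\jmath^h \in J$ by Lemma~\ref{fusionK}, so $\overline\jmath^{\,\overline h} \in J/\langle y\rangle$. Thus $J/\langle y\rangle$ is strongly closed in $\overline K$ with respect to $\overline H$, and Theorem~\ref{G=H} gives $\overline H = \overline K$, that is $H = K$; again $K$, hence $H$, is soluble.

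In either case $H$ is soluble, so the argument of the first paragraph finishes the proof. The substantive difficulties of this whole line of reasoning lie not in the present theorem but in the preparatory lemmas — above all Lemma~\ref{centralizerH}, which runs through the Smith--Tyrer theorem, together with the identification of $K/\langle y\rangle$ in Lemma~\ref{K/y}. Within the proof above, the only delicate point is the correct choice of subgroup to which Theorems~\ref{108} and~\ref{G=H} are applied: one passes to $H/\langle y\rangle$ throughout and, when $M/J \cong \Mat(10)$, must first descend to the index-$3$ subgroup $F$ so that the relevant $3$-local point stabiliser becomes $K_c$ (the group of Theorem~\ref{108}) rather than $K_a$; after that, the hypotheses of Theorems~\ref{108} and~\ref{G=H} are verified by the short arguments sketched above.
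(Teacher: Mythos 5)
Your proposal is correct and follows essentially the same route as the paper: reduce via Lemma~\ref{Mfusion} to $j=y$, then in the $\Mat(10)$ case apply Theorem~\ref{108} to $F/\langle y\rangle$ using Lemmas~\ref{F} and~\ref{centralizerH}, and in the $\Mat(10)\times 2$ case apply Theorem~\ref{G=H} to $H/\langle y\rangle$ using Lemmas~\ref{K/y}, \ref{centralizerH} and~\ref{fusionK}. The only cosmetic difference is at the end: you conclude via solubility of $H$ together with $O_{3'}(H)=1$ from Lemma~\ref{signalizer1}, whereas the paper deduces $H=K$ and cites the final assertion of Lemma~\ref{K/y}; both are valid, and your more explicit verification of the hypotheses of Theorems~\ref{108} and~\ref{G=H} is accurate.
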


\begin{proof}  By Lemma~\ref{Mfusion} (ii),  $j$ is either conjugate to an element of $Z$ or to an element of $\langle y\rangle$. In the former case we have $C_G(O_3(C_G(j))) \leq O_3(C_G(j))$ by Hypothesis~\ref{McLHyp} (iii). Thus we may suppose  $j \in \langle y \rangle$. We distinguish between the two possibilities in Lemma~\ref{M0}.

Suppose first that $M/J \cong \Mat(10)$. By Lemma~\ref{F} we have that $H$ possesses a normal subgroup $F$ of index 3.  By Lemmas~\ref{F} and \ref{centralizerH},   $F/\langle y \rangle $  satisfies the assumption of Theorem \ref{108} (with $G=F/\langle y \rangle$ and $H= (F\cap K)/ \langle y\rangle$). Hence  $F/\langle y\rangle = (F \cap K)/\langle y\rangle$ and so $H= K$. Thus the result follows from Lemma~\ref{K/y}.

Now suppose that $M/J \cong 2 \times \Mat(10)$. Then  Lemmas~\ref{K/y} (ii), \ref{centralizerH}  and \ref{fusionK} imply that $H/\langle y \rangle$ satisfies the assumptions of  Theorem \ref{G=H}. Hence again we get $H=K$ and the result follows from Lemma~\ref{K/y}.
\end{proof}

\begin{proof}[Proof of Theorem~\ref{McL}] Hypothesis~\ref{McLHyp} and Theorem~\ref{a4} provide the hypothesis for   \cite[Theorem~1.1]{McL}. Thus application of \cite[Theorem~1.1]{McL} yields $G \cong \McL$ or $G \cong \Aut(\McL)$ and proves the theorem.\end{proof}

\end{document}